\documentclass[10pt]{article}
\usepackage[utf8]{inputenc}
\usepackage[letterpaper,top=1in,bottom=1in,left=1.2in,right=1.2in]{geometry}
\usepackage[english]{babel}
\usepackage{amsmath}
\usepackage{amsthm}
\usepackage{bm}
\usepackage{amssymb}
\usepackage{dsfont}
\usepackage{commath}
\usepackage{mathtools}
\usepackage{float}
\usepackage{algorithm}
\usepackage{algpseudocode}

\usepackage[numbers]{natbib}
\usepackage{graphicx}

\usepackage{tikz}
\usetikzlibrary{positioning}

\usepackage{mathrsfs}

\newcommand{\cA}{\mathcal{A}}

\newcommand{\cD}{\mathcal{D}}

\newcommand{\cL}{\mathcal{L}}

\newcommand{\cP}{\mathcal{P}}

\newcommand{\cX}{\mathcal{X}}

\newcommand{\EE}{\mathbb{E}}

\newcommand{\PP}{\mathbb{P}}

\newcommand{\RR}{\mathbb{R}}

\newcommand{\ZZ}{\mathbb{Z}}

\newcommand{\bone}{\mathbf{1}}

\theoremstyle{plain}
\newtheorem{theorem}{Theorem}[section]
\newtheorem{corollary}[theorem]{Corollary}
\newtheorem{lemma}[theorem]{Lemma}
\newtheorem{proposition}[theorem]{Proposition}

\newtheorem{assumption}[theorem]{Assumption}
\newtheorem{example}[theorem]{Example}

\theoremstyle{definition}
\newtheorem{remark}[theorem]{Remark}
\newtheorem{definition}[theorem]{Definition}

\numberwithin{equation}{section}

\usepackage{fancyhdr}
\usepackage{hyperref}
\title{Optimal Loss Allocation in a Mean-Field Model of Systemic Risk\footnote{Research of QY was
partially supported by the National Science Foundation grant DMS 2406762.}}
\author{ }
\author{Yucheng Guo\footnote{Department of Operations Research and Financial
Engineering, Princeton University, Princeton, NJ, 08540, USA, email: 
{\tt yg7348@princeton.edu }. }
\and Qinxin Yan\footnote{Program in Applied and Computational
Mathematics, Princeton University, Princeton, NJ, 08540, USA, email: 
{\tt qy3953@princeton.edu}. }}

\date{}
\usepackage{oubraces}

\begin{document}

\maketitle

\vspace{0mm}
\begin{abstract}
We study a systemic-risk control problem in which a central planner allocates losses generated by bank defaults across the surviving institutions. Banks are modeled through their distances to default, evolving as absorbed Brownian motions with downward jumps induced by redistributed default losses. Unlike bailout models, the planner cannot inject external capital or reduce the aggregate loss, and the only admissible intervention is to decide how each endogenous loss is assigned among solvent banks. The objective is to maximize terminal system health, including survival mass as a leading special case and, more generally, increasing concave welfare functionals of the terminal distribution.

Our main result identifies an optimal allocation rule with a simple economic interpretation: losses should be concentrated on the currently healthiest institutions. In discrete time, this rule takes the form of a cutoff or “taxing-the-richest” policy, which reduces banks above an endogenous threshold down to that threshold while leaving weaker banks untouched. We prove convergence of the time-discretized mean-field control problem as the allocation time step tends to zero and characterize the limiting problem as a singular mean-field control problem. The optimally controlled law is described by a reflected free-boundary formulation, in which the cutoff becomes the moving upper edge of the support, and the associated value function satisfies a Hamilton–Jacobi equation on Wasserstein space. Finally, we formulate the corresponding finite-particle control problem and show, under suitable assumptions, that the cutoff-controlled particle system converges to the continuous-time mean-field model. This provides a finite-system foundation for the optimal mean-field loss-allocation rule.

\end{abstract}
\vspace{1mm}

\noindent\textbf{Key words:} Systemic Risk, mean-Field Control, free boundary problems.
\vspace{3pt}

\def\d{\mathrm{d}}

\section{Introduction}
\label{sec:intro}

Systemic risk arises because the failure of a financial institution is rarely an isolated event. When a bank becomes insolvent, the gap between its liabilities and the recoverable value of its assets is not eliminated by the bankruptcy procedure; rather, it is transferred to the rest of the financial system through unpaid interbank obligations, creditor losses, derivative exposures, clearing and settlement shortfalls, and possible fire-sale effects. Thus, each default produces an unavoidable loss that must be absorbed by surviving institutions, reducing their capital buffers and potentially pushing weaker banks closer to insolvency. The regulatory problem is therefore not only to understand how defaults occur, but also to determine how the losses generated by those defaults should be allocated across the remaining system. We study this question in a mean-field model of homogeneous banks, where each bank’s net asset value follows a stochastic evolution and default occurs upon hitting zero. When defaults occur, a prescribed aggregate loss is imposed on the surviving banks. Unlike bailout models, the central planner here cannot add external capital; the planner can only redistribute the endogenous losses in order to maximize terminal system health. This financial interpretation leads naturally to a mean-field control problem. 

We consider a mean-field model of homogeneous banks in which each bank's net asset value follows a Brownian motion with downward jumps induced by losses from others' defaults. Defaults occur when the net asset value hits $0$, at which point the bank exits the system. When defaults occur, a loss of prescribed magnitude must be allocated among the surviving banks. Unlike models in which losses are split proportionally or evenly, we introduce a \emph{central planner} who controls the loss allocation rule. The planner does not inject external capital, as opposed to the framework studied by \cite{CuchieroReisingerRigger2024}; rather, they redistribute the endogenous loss across surviving banks with the objective of maximizing a measure of aggregate system health at a terminal horizon.

Mathematically, the dynamics of the controlled system is governed by
\begin{align*}
X_t&=
\begin{cases}
X_0+B_t-L_t&\quad t<\tau\\
0&\quad t\ge\tau,
\end{cases}\quad X_0\sim\mu,\\
\tau&=\inf\{t\ge0:\,X_t\leq0\},
\end{align*}
where $X_t$ denotes the net asset value of a representative bank at time $t$, and $L_t$ is the controlled cumulative loss satisfying
\begin{align*}
\EE[L_t]\ge\alpha\PP[\tau\leq t],\quad L_t-L_{t-}<X_{t-},\quad\forall t\in[0,T],\quad L_t=L_\tau,\quad\forall t\in[\tau,\infty). 
\end{align*}

Given an initial distribution $\mu$ of bank health and a loss ratio $\alpha>0$, we study an optimal control problem over feedback allocation rules. The baseline objective is to maximize the survival mass $\mu_T((0,\infty))$, i.e., the fraction of banks that remain solvent at time $T$. More generally, we also consider objectives of the form
$$
\max\ \int f(x)\,\mu_T(\mathrm{d}x),
$$
where $f:[0,\infty)\to[0,\infty)$ is a pre-determined non-decreasing and concave function, capturing regulator preferences that value both survival and the distribution of remaining ``health''. Taking $f(x):=\bone_{(0,\infty)}(x)$ leads us back to the case of maximizing the fraction of surviving banks.

Our principal finding is that, under suitable assumptions, the optimal loss allocation rule has a simple and interpretable form: when a loss must be absorbed, it is optimal to concentrate the burden on the currently healthiest institutions. In discrete time this takes the form of a threshold rule that ``taxes'' banks above an endogenous cutoff, reducing them down to the cutoff until the required aggregate loss is met. We refer to this policy as \emph{taxing-the-richest}. Intuitively, the policy preserves the number of surviving banks by avoiding pushing marginal banks into default.

The central object of this paper is the continuous-time mean-field control problem. Two related models play distinct roles in its analysis. A discrete-time mean-field problem serves as a technical approximation through which we identify and construct the optimal continuous-time control. By contrast, the continuous-time finite-particle problem provides a microscopic counterpart to the mean-field model. Its analysis is logically separate and is not used to prove the continuous-time mean-field results.

We first introduce a time-discretized version of the mean-field control problem, in which diffusion and default take place between successive grid times and the resulting losses are allocated at the grid times. For each fixed time step $\Delta>0$, stochastic-dominance arguments show that the optimal allocation is a cutoff rule: the loss is imposed on institutions above an endogenous threshold, reducing them to that threshold. We then prove that, as $\Delta\downarrow0$, the discrete-time value functions and the associated cutoff boundaries converge to their continuous-time counterparts. This vanishing-time-step argument yields the optimality of the taxing-the-richest rule for the continuous-time mean-field problem.

The limiting control problem is a singular mean-field control problem. We characterize its value function through a Hamilton–Jacobi equation on the Wasserstein space and establish a comparison principle for its viscosity solutions. We also show that the optimally controlled law is described by a free-boundary problem with reflection. In this representation, the cutoff becomes the moving upper edge of the support, and the optimal control is the Skorokhod reflection that allocates the endogenous default loss at this upper boundary.

Separately, we formulate the corresponding continuous-time finite-particle control problem. We derive a finite-dimensional dynamic-programming hierarchy whose boundary conditions encode the reallocations following individual defaults. Under the relevant symmetry and Schur-concavity properties, the boundary optimization is again solved by the taxing-the-richest cutoff rule. We then show that the cutoff-controlled particle system converges, as the number of particles $N\to\infty$, to the continuous-time mean-field model. This result provides a finite-system foundation for the mean-field formulation and confirms that the optimal mean-field feedback arises as the large-population limit of the natural finite-particle policy.

\begin{assumption}
The initial distribution $\mu_0\in\cP(\RR^+)$ is such that
\begin{equation*}
\mu_0(\{0\})=0\quad\text{and}\quad\int_{\RR^+}x\,\mu_0(\d x)>\alpha.    
\end{equation*}
\end{assumption}

Our main findings can be summarized as the following theorems.
\begin{theorem}
Under suitable assumptions, the value function $V(t,\mu)$ of the mean-field control problem is a viscosity solution of the PDE on the Wasserstein space:
\begin{align}\label{eq:PDEWasserstein1}
\partial_tV(t,\mu)+\frac12\int_{\RR^+}\partial_{xx}\partial_\mu V(t,\mu)(x)\mu(\mathrm{d}x)+&\frac{\alpha}{2}\partial_x\frac{\mathrm{d}\mu}{\mathrm{d}x}(0)\sup_R\left\{-\int_{\RR^+}\partial_x\partial_\mu V(t,\mu)(x)R(\mu)(\mathrm{d}x)\right\}=0,\notag\\
(t,\mu)&\in[0,T)\times\cP(\RR^+),\\
V(T,\mu)=\int_{\RR^+} f(x)\,\mu(\mathrm{d}x),\quad\mu&\in\cP(\RR^+).\notag
\end{align}
Moreover, $V$ is the limit of the values of the discrete problems as the time step $\Delta t\downarrow0$. Separately, $V$ is also the limit of the value functions of the continuous-time finite-particle problems as the number $N$ of particles goes to infinity. 
\end{theorem}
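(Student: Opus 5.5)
The proof has three parts: the discrete-time approximation, the viscosity property, and the finite-particle limit. In each part the taxing-the-richest structure is used to collapse the control problem to a single candidate policy.

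\textbf{Step 1: the discrete problem.} Fix a grid of mesh $\Delta$ and let $V^\Delta(t,\mu)$ denote the discrete value. By backward induction one shows:
\begin{itemize}
\item $V^\Delta(t,\cdot)$ is monotone with respect to first-order stochastic dominance, and concave in the sense of the increasing concave order. This follows from $f$ being non-decreasing and concave, and from the fact that absorbed Brownian motion preserves both orders.
\item At every grid time the optimal allocation is the cutoff rule. The idea is that, among all laws obtained by subtracting total mass $\alpha\,\Delta\PP[\text{default}]$ from the survivors, the cutoff law is maximal in the increasing concave order. It is the ``most equal'' reduction, analogous to a Schur-concavity or majorization statement.
\end{itemize}

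\textbf{Step 2: passing to $\Delta\downarrow0$.} Here I would establish the following.
\begin{itemize}
\item Equicontinuity of $V^\Delta$ in $(t,\mu)$, in the $W_1$ topology, uniformly in $\Delta$. The needed estimates are Lipschitz dependence of the cutoff map on $\mu$ and a modulus in time coming from diffusion and losses of order $\Delta$.
\item Convergence of the cutoff-controlled discrete laws to the solution of the reflected free-boundary problem. In the limit, the cutoff becomes the upper edge $b_t$ of the support, and the control becomes the Skorokhod reflection pushing mass down at $b_t$ at rate $\alpha$ times the boundary flux $\frac12\partial_x\frac{\d\mu}{\d x}(0)$.
\item Existence and uniqueness of this reflected McKean--Vlasov system. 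I would treat it as a fixed point on the loss process, using monotonicity in the spirit of the minimal-solution construction of Delarue--Nadtochiy--Shkolnikov.
\item Identification of the limit with $V$. For the upper bound $V\le\lim V^\Delta$, discretize any continuous admissible control and use the optimality of the cutoff rule at level $\Delta$. The reverse inequality is immediate, since the limiting reflected control is admissible.
\end{itemize}

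\textbf{Step 3: the viscosity property and the particle limit.} First I prove the dynamic programming principle for $V$ over short horizons. Then I test against smooth cylindrical functions $\phi$ touching $V$ from above or below at $(t,\mu)$ and apply Itô's formula along flows of measures. Over a short horizon $h$:
\begin{itemize}
\item the diffusion contributes $\frac12\int\partial_{xx}\partial_\mu\phi\,\d\mu$;
\item the default flux produces an expected loss of $\frac{\alpha}{2}\partial_x\frac{\d\mu}{\d x}(0)\,h$, to be distributed according to some probability $R(\mu)$ on the support.
\end{itemize}
Optimizing over $R$ yields the supremum in the Hamiltonian, which gives the sub- and supersolution inequalities.

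For the particle limit, the finite-particle dynamic programming hierarchy has optimal policy given by the cutoff rule (again by Schur-concavity). One then shows propagation of chaos for the cutoff particle system towards the reflected free-boundary law, with $V^N\to V$ following by tightness and uniqueness of the limit.

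\textbf{Main obstacle.} I expect the hardest part to be the regularity behind Step 2. Both the flux term $\partial_x\frac{\d\mu}{\d x}(0)$ and the cutoff map can be singular for general $\mu$. Uniform estimates therefore need control of the density near $0$ and must rule out blow-up, i.e.\ cascades. My first attempt would be to restrict to initial laws with a bounded density that vanishes linearly at $0$, and to show that these properties propagate under diffusion with reflection from above; this is what I take ``under suitable assumptions'' to mean.
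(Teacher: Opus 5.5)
\textbf{Verdict: Step 2 ($\Delta\downarrow0$) has a genuine gap; two intermediate claims are also false.}

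Your architecture matches the paper's:
\begin{enumerate}
\item increasing-concave maximality of the cutoff law at each grid time;
\item the inequality $V^\Delta\ge V$, obtained by replacing a continuous control with cutoffs at the grid times and using that $S_\Delta$ and $T_c$ preserve $\succsim_2$;
\item the reverse inequality, obtained from limits of the discrete cutoff systems;
\item a DPP argument for the Hamiltonian;
\item the Schur-concave hierarchy plus propagation of chaos as $N\to\infty$.
\end{enumerate}

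\textbf{False intermediate claims.} The uniform $W_1$-equicontinuity of $V^\Delta$ fails. For $f=\bone_{(0,\infty)}$, $V(T,\mu)=\mu((0,\infty))$ is already discontinuous. For $t<T$, take $\mu_\epsilon=(1-p)\delta_a+p\delta_\epsilon$, which tends to $\mu'=(1-p)\delta_a+p\delta_0$ in $W_1$. The $\epsilon$-mass defaults within the next step with probability tending to one, and the survivors must absorb an extra loss $\alpha p$; the dead mass of $\mu'$ has already paid. Using $\EE[X_T]=\EE[X_t]-\EE[L_T-L_t]$ with $f(x)=x$ and $a$ large, the two values differ by roughly $\alpha p$. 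The same example refutes your Step 1 claim of first-order monotonicity. It holds only under the restriction $\mu(\{0\})=\tilde\mu(\{0\})$, which the paper imposes. Equicontinuity is not needed anyway: the two inequalities give pointwise convergence.

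\textbf{The gap: no compactness for the cutoff levels.} The reverse inequality is not ``immediate'', and your plan contains no compactness mechanism for $\Lambda^\Delta$. Tightness of $\mu^\Delta$ does not identify the limiting control, because $\mu\mapsto\max\mathrm{supp}\,\mu$ is only lower semicontinuous in $W_1$. The paper proceeds as follows.
\begin{itemize}
\item It proves a uniform lower bound $\kappa^\Delta_n\ge\kappa^*>0$, by induction from the convexity of $(x-\kappa^*)_+$ and $\EE[X_0]>\alpha$.
\item It uses a one-sided bound on upward increments, $\Lambda^\Delta_{m\Delta}-\Lambda^\Delta_{n\Delta}\le C\omega((m-n)\Delta)$. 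By the appendix lemma this controls the $M_1$ oscillation, so $\{\Lambda^\Delta\}$ is $M_1$-precompact.
\item Subsequential limits are then passed through the Skorokhod map and the balance identity.
\end{itemize}

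\textbf{Your substitute for this does not work as stated.} You propose well-posedness of the reflected McKean--Vlasov system via a Delarue--Nadtochiy--Shkolnikov-type monotone iteration. Such an iteration yields at best a minimal solution, not uniqueness. Moreover, the boundary is defined only implicitly through $\EE[L_t]=\alpha\PP[\tau\le t]$, so even monotonicity of the fixed-point map is unclear.

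\textbf{The missing idea: uniqueness is not needed for the mean-field statements.} The discrete cutoff rule does not depend on $f$ or on the horizon. It is therefore optimal simultaneously for every nondecreasing concave $f$ and every grid horizon. Any subsequential limit flow is admissible, and by $V^\Delta\ge V$ it attains $V$ for all such $f$ and $t$. Since the functions $x\mapsto x\wedge y$ separate laws on $\RR^+$, all limit flows coincide, and so do the boundaries $\Lambda_t=\max\mathrm{supp}\,\mu_t$. Well-posedness is genuinely needed only for the particle limit, where the paper assumes it as (A2).

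\textbf{Subsolution inequality.} Your DPP proof cannot rely on continuity of $\mathcal H[\phi]$ at $(t_0,\mu_0)$. Both $\partial_x\frac{\mathrm{d}\mu}{\mathrm{d}x}(0)$ and the constraint $\mathrm{supp}\,R\subset\mathrm{supp}\,\mu$ are $W_1$-discontinuous. For example, under proportional allocation $\mathrm{supp}\,\mu_s=[0,\infty)$ for all $s>t_0$. You need an extra estimate: the loss that can be allocated at distance at least $\varepsilon$ from $\mathrm{supp}\,\mu_0$ during $[t_0,t_0+h]$ is $o(h)$. It is in fact exponentially small, since a bank absorbs at most its own distance to default. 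The paper instead derives the subsolution inequality from the discrete scheme, where the one-step optimizer is explicit.
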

In the theorem above, $\partial_\mu V(t,\cdot)$ denotes the linear derivative of the function $\mu\mapsto V(t,\mu)$. The supremum above is taken over all maps $R:\cP(\RR^+)\to\cP(\RR^+)$ such that $\mathrm{supp}\,R(\mu)\subset\mathrm{supp}\,\mu$. Existence of viscosity solution to Equation~\eqref{eq:PDEWasserstein1} follows standard analysis, while uniqueness of viscosity solution requires a new comparison principle. We show comparison principle of~\eqref{eq:PDEWasserstein1} within a class of monotone functions in a companion work~\cite{guosoneryancomparison}.


\begin{theorem} Under suitable assumptions, the Lebesgue density $u(t,x)$ of $\mu_t$, under the optimal control, is a (weak) solution to the following free boundary problem:
\begin{equation}\begin{aligned}\label{eq:FBP1}
\partial_tu(t,x)&=\frac12\partial_{xx}u(t,x),\quad 0<x<\Lambda_t,\\
u(t,0)&=0,\\
\frac12\partial_xu(t,\Lambda_t)&=-\dot\Lambda_tu(t,\Lambda_t),\\
u(t,\Lambda_t)&=\alpha\partial_xu(t,0).
\end{aligned}\end{equation}
Moreover, up to time $\tau$, the optimal control $L_t$ is given by the Skorokhod reflection map applied to the process $X_0+B_t-\Lambda_t$:
\begin{align*}
L_t=\sup_{r\in[0,t]}(X_0+B_r-\Lambda_r)^+.    
\end{align*}
As a corollary, $\Lambda_t$, the free boundary,  and $\mu_t$ are connected via
\begin{align*}
[0,\Lambda_t]=\mathrm{supp}(\mu_t),\quad\forall t\in(0,T].
\end{align*}
Moreover, $\Lambda_t$ is implicitly determined such that 
\begin{align*}
\EE[L_t]=\alpha\PP[\tau\leq t]\quad\text{where}\quad\tau:=\inf\{t\ge0:\,X_t\leq0\}.    
\end{align*}
\end{theorem}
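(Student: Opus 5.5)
We would prove the statement by passing to the limit in the time-discretized problem rather than by attacking the free boundary problem directly. For fixed $\Delta>0$, the paper's discrete-time analysis (stochastic dominance) gives an optimal cutoff policy. At each grid time $k\Delta$ there is a level $\Lambda^\Delta_{k\Delta}$ such that every surviving bank above it is pushed down to it. The cutoff is fixed by the budget identity
\begin{equation*}
\int (x-\Lambda^\Delta_{k\Delta})^+\,\mu^\Delta_{k\Delta-}(\d x)=\alpha\,\bigl(\mu^\Delta_{k\Delta-}(\{0\})-\mu^\Delta_{(k-1)\Delta}(\{0\})\bigr).
\end{equation*}
At the particle level this is a discrete Skorokhod map: the cumulative loss equals $L^\Delta_t=\max_{k\Delta\le t}(X_0+B_{k\Delta}-\Lambda^\Delta_{k\Delta})^+$ before absorption. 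This holds because after each cut the particle sits at or below the current cutoff, so its cumulative loss is the running maximum of the excess.

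\textbf{Step 1: compactness.} We need compactness of $\Lambda^\Delta$ in a topology that passes through the Skorokhod map. First, $\Lambda^\Delta$ is bounded above by $\operatorname{ess\,sup}$ of the support (or by a quantile when $\mu_0$ has unbounded support). Second, it is bounded below by a positive constant on $[0,T]$, since $\int x\,\mu_0>\alpha$ keeps the total mass above the cutoff large enough. Third, $\Lambda^\Delta$ is essentially nonincreasing after the Brownian increments are accounted for. Monotonicity plus bounds gives Helly compactness, which yields pointwise convergence $\Lambda^{\Delta_n}\to\Lambda$ at continuity points. Since the map $\lambda\mapsto\sup_{r\le t}(X_0+B_r-\lambda_r)^+$ is continuous for uniform convergence, we also want to upgrade this to uniform convergence via an equicontinuity estimate. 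The natural tool is the budget identity: a downward jump of $\Lambda$ of size $h$ would require a loss of order $h\cdot\mu(\text{near the top})$ within one step. That loss is bounded by $\alpha$ times the default mass in time $\Delta$, and the default mass is $O(\Delta^{1/2})$ or better, because the density vanishes at $0$ after time $0^+$. We expect this step to be the main obstacle. It needs a lower bound on the density near the moving top edge and an upper bound on the flux at $0$, both uniform in $\Delta$. We would first try to get them from a comparison with the heat kernel, using that between cuts the law evolves by killed Brownian motion.

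\textbf{Step 2: identifying the limit and its optimality.} Given uniform convergence, the processes $X^\Delta$ converge pathwise, together with their hitting times. The hitting times converge at non-tangential hits, which is almost sure for Brownian paths. Hence $\EE[L_t]=\alpha\PP[\tau\le t]$ in the limit. Since the discrete values converge to $V$ (the preceding theorem), the limiting reflected control attains $V$ and is optimal.

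\textbf{Step 3: the free boundary problem.} Apply Itô's formula to $\varphi(X_t)$, with test functions $\varphi$ vanishing at $0$, for the reflected killed process. Because the reflection $\d L$ acts only at $\{X=\Lambda\}$, we get
\begin{equation*}
\frac{\d}{\d t}\int\varphi\,\d\mu_t=\frac12\int\varphi''\,\d\mu_t-\int \varphi'(\Lambda_t)\,\d\EE[L_t]/\d t .
\end{equation*}
This is the weak form of the heat equation on $(0,\Lambda_t)$ with Dirichlet condition at $0$ and a flux condition at $\Lambda_t$. The total loss rate $\frac{\d}{\d t}\EE[L_t]=\frac{\alpha}{2}\partial_x u(t,0)$ is the budget constraint. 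Matching boundary terms under sufficient regularity gives the Stefan-type condition $\frac12\partial_x u=-\dot\Lambda u$ at $\Lambda_t$, together with the condition linking $u(t,\Lambda_t)$ to the boundary flux at $0$ stated in the theorem. Finally, $\mathrm{supp}\,\mu_t=[0,\Lambda_t]$ for $t>0$. Reflection keeps mass in $[0,\Lambda_t]$. Conversely, Brownian diffusion together with reflection reaches every point of the interval with positive density by a support-theorem argument.
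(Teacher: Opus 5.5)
\textbf{The gap is in Step 1, and the rest of your argument is built on it.} Several later steps need $\Lambda^\Delta$ to be equicontinuous in \emph{both} directions, uniformly in $\Delta$: the pathwise convergence of $X^\Delta$ and $\tau^\Delta$ in Step 2, the continuity of the limit $\Lambda$, and the choice of $s<t$ with $\inf_{[s,t]}\Lambda>\sup J$ in your support argument. You do not prove this. The downward-direction estimate you sketch needs a lower bound of order $h$, uniform in $\Delta$, on the mass of $\mu^\Delta$ within $h$ of the moving top edge. That bound is the hard part, and a heat-kernel comparison does not supply it. The mass near the edge depends on the past motion of the edge itself, while the killed-heat-kernel comparison only controls the region below the cutoffs. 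The estimate also needs a uniform upper bound on the default flux, which you do not prove. Separately, your a priori bound $\Lambda^\Delta\le\operatorname{ess\,sup}\mu_0$ is false. For $\mu_0=\delta_1$, the excess above the first cutoff must be of the same exponential order $e^{-1/(2\Delta)}$ as the default mass, so the first cutoff sits near $2$.

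\textbf{The paper never needs a two-sided estimate; this is the missing idea.} It controls only upward moves, $\Lambda^\Delta_{m\Delta}-\Lambda^\Delta_{n\Delta}\le C\omega((m-n)\Delta)$, together with $\Lambda^\Delta\ge\kappa^*>0$. These are your ``essentially nonincreasing'' and ``bounded below'' claims, and they give precompactness in $\cD$ for the $M_1$ topology. The paper then takes a merely c\`adl\`ag limit $\Lambda$ for which the reflected system satisfies $\EE[L_t]=\alpha\PP[\tau\le t]$. It proves continuity of $\Lambda$ a posteriori from this exact balance, without quantitative density bounds.
\begin{itemize}
\item An upward jump of size $\delta$ at $t_0$ forces $\EE[L_{t_0+h}-L_{t_0}]\le C\sqrt h\,e^{-c\delta^2/h}$. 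But a Hopf-type bound for Brownian motion killed outside $(0,R)$, with $R<a_*=\EE[X_0]-\alpha$, gives $\PP(t_0<\tau\le t_0+h)\ge c_2h$. The control never acts in that region, which is why the bound applies.
\item For a downward jump: reflection acts only above $a_*$, so defaults are Brownian first passages and $\tau$ has no atoms at deterministic times. Hence $\EE[\Delta L_t]=\alpha\PP(\tau=t)=0$. Yet $\Delta L_t=(X_{t-}-\Lambda_t)^+\ge\delta/3$ on an event of positive probability, by interior reachability.
\end{itemize}
Once continuity is obtained this way, your Step 3 coincides with the paper's treatment. That covers both the It\^o weak form with flux $-\varphi'(\Lambda_t)\,\d\EE[L_t]$ and the support-theorem argument. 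The one remaining change is in Step 2: the limit must be taken under $M_1$ convergence, where the running supremum is still continuous, rather than through sup-norm Lipschitz bounds on the Skorokhod map.
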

To the best of our knowledge, the free boundary problem \eqref{eq:FBP1} has not been addressed in the literature. It is closely related to the recent development of particle systems with selection \cite{ParticleSelectionRami}, but with absorption on hitting the free boundary replaced by reflection, hence the well-posedness of \eqref{eq:FBP1} is not covered by the techniques developed therein. Therefore, it constitutes another part of our technical contributions. 

\medskip

 \paragraph{Organization.}
 Section~\ref{sec:discrete} studies the discrete-time problem and derives the optimal cutoff allocation rule. Section~\ref{sec:continuous} discusses the continuous-time formulation, the singular mean-field control limit, and the associated free-boundary characterization. In Section~\ref{sec: finite_player}, we justify the study of this mean-field control problem by formulating the corresponding finite-player optimal control problem, and show the convergence to the mean-field problem when the number of players goes to infinity. Some auxiliary results are provided in the Appendix Section~\ref{sec:appendix}.

 \subsection{Related literature}
A large part of the literature in systemic risk studies how balance-sheet linkages, interbank borrowing, or common exposures transmit distress across institutions. Static clearing models, beginning with \cite{EisenbergNoe2001}, characterize payments and default cascades in financial networks. Dynamic diffusion models such as \cite{FouqueIchiba2013} describe the evolution of bank reserves under interbank lending, while \cite{CarmonaFouqueSun2015} formulates interbank borrowing and lending as a mean-field game, and \cite{guo2024cascadeequation} studies systemic risk model under a sparse network structure. In those models, interaction is primarily introduced through drifts, relative-performance terms, or network liabilities.

A complementary line of work models bank equity or distance to default by diffusions absorbed at the origin. In \cite{HamblyLedgerSojmark2019}, defaults feed back into the surviving population through a singular loss process and may produce blow-ups in the aggregate loss. Common noise and heterogeneous contagion are incorporated in \cite{HamblySojmark2019} and \cite{FeinsteinSojmark2021}. These works are particularly close to the present paper in their use of absorption, endogenous loss, and a mean-field description of default clustering. The crucial distinction is that the contagion mechanism in those papers is prescribed, whereas here a central planner chooses how each endogenous loss is distributed among the surviving banks.

There are several papers that replace the decentralized strategic interaction by a central planner. The weak-control and $\Gamma$-convergence approach of \cite{BoLiYu2022} studies monetary-supply policies in a heterogeneous interbank system. Singular controls also arise in interbank models with benchmark rates; see \cite{ContGuoXu2021}. Most closely related from the perspective of default prevention is \cite{CuchieroReisingerRigger2024}, where a central bank injects capital into distressed institutions and the mean-field limit is connected to a drift-controlled supercooled Stefan problem. More recent linear-quadratic formulations consider coordinated or robust monetary and supervisory instruments; see, for example, \cite{Yamanaka2025}.

The control studied here has a different economic and mathematical character. The planner cannot inject external capital, change the aggregate amount of loss, or eliminate the balance-sheet cost generated by defaults. The admissible intervention only reallocates an endogenous loss across the currently solvent population. Consequently, the control is distributional rather than scalar, where at the infinitesimal level it is represented by a probability measure supported on the positive support of the current state. The aggregate-loss constraint places the intervention in the singular, finite-variation control class rather than the usual absolutely continuous drift-control class, while the no-allocation-induced-default restriction makes its action set state dependent. The resulting ``taxing-the-richest'' policy is therefore not a bailout rule, instead, it is an optimal rule for assigning an unavoidable loss.

The distinction between mean-field games and centralized control of mean-field dynamics was developed systematically in \cite{CarmonaDelarueLachapelle2013}. Dynamic programming principles, Bellman equations, and probabilistic representations for mean-field control were established in, among others, \cite{PhamWei2017,BayraktarCossoPham2018}. The relation between controlled $N$-particle systems and their mean-field control limits is studied in \cite{Lacker2017}. These results provide the general conceptual framework for viewing the law of the representative state as the controlled Markov state. 

The present mean-field problem falls outside the most regular versions of this theory. The state law has an absorbing atom at zero, and hence the default rate is encoded by the boundary trace. Moreover, the natural extremal feedback depends on the upper support edge.
Neither the boundary nor the support edge is continuous under the ordinary Wasserstein topology without additional regularity and support information. This explains the need for an enriched state description that records both the density profile and the moving support endpoint. It also distinguishes the model from mean-field control problems with globally Lipschitz coefficients and a fixed compact control set, such as \cite{SonerYan2024, soneryan}.

When characterizing the optimal allocation, the order structure is used, which is rooted in the classical theories of majorization and stochastic dominance; see \cite{HardyLittlewoodPolya1952,MarshallOlkinArnold2011,ShakedShanthikumar2007}. At the finite-player level, fixing the total loss to be allocated leads to a water-filling problem. The cutoff vector obtained by reducing all sufficiently large coordinates to a common threshold is majorized by every other feasible post-allocation vector. A symmetric Schur-concave continuation value is therefore maximized by concentrating the burden on the healthiest institutions. In distributional form, this says that the cutoff tax operator produces the maximal feasible post-tax law in increasing-concave order.

The optimally controlled law is also related to a substantial literature on Stefan-type problems and singular mean-field systems. Positive-feedback mean-field equations can be reformulated as supercooled Stefan problems, and global physical solutions in the presence of blow-ups are analyzed in \cite{DelarueNadtochiyShkolnikov2022}. The controlled bailout model of \cite{CuchieroReisingerRigger2024} produces a drift-controlled version of this free-boundary structure. For particle systems with selection, \cite{ParticleSelectionRami} develops a weak free-boundary formulation that remains meaningful even when a classical interface is unavailable. On the probabilistic side, existence, uniqueness, and explicit representations for Skorokhod reflection in time-dependent intervals are studied in \cite{BurdzyKangRamanan2009}.

\section{The Mean-Field Problem in Discrete Time}
\label{sec:discrete}
For the original continuous-time mean-field control problem, the constraint of the control and the state dynamics are coupled together, making the analysis complicated. To better characterize the optimal control, we first consider the corresponding discrete-time setting, where the state dynamic is still in continuous time, but the optimal allocation takes place with a fixed discrete-time interval.

Given initial distribution $\mu$, loss ratio $\alpha>0$ and time step $\Delta>0$, we consider
\begin{equation}\begin{aligned}\label{eq:DiscreteProblem1}
\max&\quad\mu_T((0,+\infty))\\
\mathrm{s.t.}&\quad\mu_t(\mathrm{d}x)=\cL(X_t\in\,\mathrm{d}x)
\end{aligned}\end{equation}
where
\begin{align*}
X_t&=
\begin{cases}
X_0+B_t-L_t&\quad t<\tau\\
0&\quad t\ge\tau,
\end{cases}\quad X_0\sim\mu,\\
L_t&=L_{n\Delta},\quad t\in[n\Delta,(n+1)\Delta),\quad n\ge0\\
\tau&=\inf\{t\ge0:\,X_t\leq0\},
\end{align*}
and the following constraint is put on $L$: 
\begin{align*}
\EE[L_{n\Delta}]&\ge\alpha\mu_{n\Delta}(\{0\}),\quad\Delta L_{n\Delta}=L_{n\Delta}-L_{(n-1)\Delta}<X_{n\Delta-}\quad\text{on}\quad\{\tau> (n-1)\Delta\},\\
L_t&=L_\tau,\quad t\ge\tau.
\end{align*}
We mention that the problem is always feasible provided that $\EE[X_0]>\alpha$.\\

If we consider Markovian feedback control, then it makes sense to consider $\xi_n$ of rank-based shape
\begin{align*}
\xi_n&=\varphi(X_{n\Delta-},\mu_{n\Delta-}, c_n),\quad c_n=\alpha\left(\mu_{n\Delta-}(\{0\})-\mu_{(n-1)\Delta}(\{0\})\right),\\
\text{s.t.}&\quad\varphi(x,\mu,\nu)< x\quad\text{for}\quad x>0,\quad\int\varphi(x,\mu,c)\,\mu(\mathrm{d}x)=c,
\end{align*}
and $\varphi:\RR^+\times\cP(\RR^+)\times\RR^+\to\RR^+$ plays the role of the control, where $\RR^+=[0,\infty)$.\\

In general, we can consider the control problem
\begin{align*}
\max\quad\int f(x)\,\mu_T(\mathrm{d}x),   
\end{align*}
where $f:[0,+\infty)\to[0,+\infty)$ is a non-decreasing and concave function. Taking $f(x):=\bone_{(0,+\infty)}(x)$ gets us back to the original problem.\\

We denote by $V(t,\mu)$ the optimal value of the control problem given that $\mu_t=\mu$. 

\begin{definition}
For $\mu,\nu\in\cP(\RR^+)$, 
\begin{enumerate}
\item $\mu$ dominates $\nu$ in the first order, denoted by $\mu\succsim_1\nu$, if and only if 
\begin{align*}
\int f(x)\mu(\mathrm{d}x)\ge\int f(x)\nu(\mathrm{d}x)    
\end{align*}
for all non-decreasing functions $f$.
\item  $\mu$ dominates $\nu$ in the second order, denoted by $\mu\succsim_2\nu$, if and only if 
\begin{align*}
\int g(x)\mu(\mathrm{d}x)\ge\int g(x)\nu(\mathrm{d}x)    
\end{align*}
for all non-decreasing and concave functions $g$.
\end{enumerate}
\end{definition}

\textbf{An auxiliary problem.} Given $\mu\in\cP(\RR^+)$ and $c\ge0$ such that $\int x\mu(\mathrm{d}x)\ge c$, solve
\begin{equation}\begin{aligned}\label{eq:Auxiliary1}
\max&\quad\int f(x)\nu(\mathrm{d}x)\\
\text{s.t.}&\quad\mu\succsim_1\nu,\quad\int x\mu(\mathrm{d}x)-\int x\nu(\mathrm{d}x)\ge c.
\end{aligned}\end{equation}

\begin{proposition}\label{prop:DiscreteOptimal1}
An optimizer of the auxiliary problem is given by $\nu^*=(\mathrm{id}-\varphi(\cdot,\mu,c))_\#\mu$, where
\begin{align*}
\varphi(x,\mu,c)&=(x-x^*)\bone_{\{x\ge x^*\}}\\
\text{where}\quad x^*&=\sup\left\{y\ge0:\int_{[y,+\infty)}(x-y)\,\mu(\mathrm{d}x)\ge c\right\}.
\end{align*}
Moreover, $\nu^*\succsim_2\nu$ for any $\nu\in\cP(\RR^+)$ that is feasible.
\end{proposition}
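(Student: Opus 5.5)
The plan is to prove the stronger second claim, $\nu^*\succsim_2\nu$ for every feasible $\nu$. Optimality of $\nu^*$ for the auxiliary problem \eqref{eq:Auxiliary1} then follows at once, since $f$ is non-decreasing and concave.

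First we check that $\nu^*$ is feasible. Set $h(y)=\int_{[y,\infty)}(x-y)\,\mu(\d x)=\int (x-y)^+\mu(\d x)$. This function is continuous, non-increasing and convex in $y$. It satisfies $h(0)=\int x\,\mu(\d x)\ge c$ and $h(y)\to0$ as $y\to\infty$. Hence, when $c>0$, the supremum $x^*$ is finite and $h(x^*)=c$. The case $c=0$ is trivial: $\nu^*=\mu$ is feasible and $\succsim_1$-dominates every feasible $\nu$. Because $\mathrm{id}-\varphi=\min(\cdot,x^*)\le \mathrm{id}$ pointwise, we get $\mu\succsim_1\nu^*$. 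The mean loss is $\int\varphi\,\d\mu=h(x^*)=c$. (If the constraint $\varphi<x$ matters, note $\varphi(x)=x-x^*<x$ whenever $x^*>0$.)

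Next, for the dominance we use the standard characterization. For laws on $\RR^+$ with finite mean, $\nu^*\succsim_2\nu$ is equivalent to
\[
\int_0^y F_{\nu^*}(s)\,\d s\le\int_0^y F_\nu(s)\,\d s \quad\text{for all } y\ge0 .
\]
Equivalently, $\int\min(x,y)\,\nu^*(\d x)\ge\int\min(x,y)\,\nu(\d x)$ for all $y$, because the functions $\min(\cdot,y)$ together with constants generate the non-decreasing concave functions on $\RR^+$ (monotone limits of their positive combinations). So it suffices to compare $G_\rho(y):=\int\min(x,y)\,\rho(\d x)$ for $\rho=\nu^*$ and $\rho=\nu$. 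We split into two cases.

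\emph{Case $y\le x^*$.} Here $\min(\min(x,x^*),y)=\min(x,y)$, so $G_{\nu^*}(y)=G_\mu(y)$. Since $\mu\succsim_1\nu$ and $\min(\cdot,y)$ is non-decreasing, $G_\mu(y)\ge G_\nu(y)$, which gives the claim.

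\emph{Case $y> x^*$.} Here $G_{\nu^*}(y)=\int\min(x,x^*)\,\mu(\d x)=\int x\,\mu(\d x)-c$. On the other hand, $G_\nu(y)\le\int x\,\nu(\d x)\le\int x\,\mu(\d x)-c$ by the mean constraint. Combining the two cases gives $G_{\nu^*}\ge G_\nu$ everywhere, hence $\nu^*\succsim_2\nu$.

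Finally, $\int f\,\d\nu^*\ge\int f\,\d\nu$ for every non-decreasing concave $f$, which proves optimality. The indicator $f=\bone_{(0,\infty)}$ is not concave on $[0,\infty)$, but it is covered directly. If $x^*>0$, then $\nu^*(\{0\})=\mu(\{0\})\le\nu(\{0\})$ by first-order dominance, so the survival mass of $\nu^*$ is at least that of $\nu$.

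The main obstacle is the reduction from general non-decreasing concave $f$ to the family $\min(\cdot,y)$. This requires an approximation argument: represent a concave non-decreasing $f$ as $f(0)+\int\min(x,y)\,m(\d y)$ with $m\ge0$ given by $-f''$ together with the slope at infinity, and apply monotone convergence. It also requires care with integrability when $f$ grows linearly; means are finite here, so this is harmless.
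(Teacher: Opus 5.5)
Your proof is correct and is essentially the paper's argument. The paper rewrites $\int f\,\mathrm{d}\nu=\int f'(t)\,\nu([t,\infty))\,\mathrm{d}t$ and uses exactly your two constraints: the pointwise tail bound from $\mu\succsim_1\nu$ and the integrated bound from the mean condition. Your two-case comparison of $\int\min(x,y)\,\nu(\mathrm{d}x)=\int_0^y\nu((t,\infty))\,\mathrm{d}t$ around $x^*$ is the rigorous version of the paper's ``it is now clear'' step.

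One small correction: $\bone_{(0,\infty)}$ is in fact non-decreasing and concave on $[0,\infty)$, since concave functions may jump down at the endpoint $0$, and your representation $f(0)+\int\min(x,y)\,m(\mathrm{d}y)$ misses such jumps. This is already covered by your comparison: $\bone_{(0,\infty)}(x)=\lim_{y\downarrow0}\min(x,y)/y$, so dominated convergence gives $\nu^*((0,\infty))\ge\nu((0,\infty))$ with no case split on whether $x^*>0$.
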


\begin{proof}
As $f$ is concave on $\RR^+$, there exists a non-increasing and integrable function $f'$ such that
\begin{align*}
f(x)=f(0+)+\int_0^xf'(t)\,\mathrm{d}t,\quad\forall x>0.    
\end{align*}
We can then re-write the objective using Fubini's theorem as
\begin{align*}
\int_{\RR^+} f(x)\,\nu(\mathrm{d}x)&=\iint_{(\RR^+)^2}f'(t)\bone_{\{t\leq x\}}\,\mathrm{d}t\,\nu(\mathrm{d}x)=\int_{\RR^+}f'(t)\nu([t,\infty))\,\mathrm{d}t.
\end{align*}
Since
\begin{align*}
\mu\succsim_1\nu\quad\text{and}\quad\int x\mu(\mathrm{d}x)-\int x\nu(\mathrm{d}x)\ge c,    
\end{align*}
we have
\begin{align*}
\nu([t,\infty))\leq\mu([t,\infty)),\quad\forall t\ge0\quad\text{and}\quad\int\nu([t,\infty))\,\mathrm{d}t\leq\int\mu([t,\infty))\,\mathrm{d}t-c.   
\end{align*}
It is now clear that $\nu^*$ is an optimizer. As $\nu^*$ does not depend on $f$, we have actually shown that $\langle\nu^*,f\rangle\ge\langle\nu,f\rangle$ for any feasible $\nu$ and any non-decreasing and concave $f$. That is, $\nu^*\succsim_2\nu$ for any feasible $\nu$.
\end{proof}

For notational ease, we denote by $S_t:\cP(\RR^+)\to\cP(\RR^+)$ the evolution operator corresponding to the law of Brownian motion on $\RR^+$ absorbed on hitting $0$, that is
\begin{align*}
S_t\mu(\d x)&:=\int_{\RR^+}\PP[B_t\in\d x,\tau_0>t\,|\,B_0=z]\,\mu(\d z)+\int_{\RR^+}\PP[\tau_0\leq t\,|\,B_0=z]\,\mu(\d z)\cdot\delta_0(\d x),\\
\text{where}\quad\tau_0&:=\inf\{t\ge0:B_t\leq0\}.
\end{align*}
We denote by $T_c:\cP(\RR^+)\to\cP(\RR^+)$ the ``tax" operator corresponding to the optimal solution map to the problem \eqref{eq:Auxiliary1}, that is $T_c\mu:=\nu^*=(\mathrm{id}-\varphi(\cdot,\mu,c))_\#\mu$, where
\begin{align*}
\varphi(x,\mu,c)&=(x-x^*)\bone_{\{x\ge x^*\}}\\
\text{and}\quad x^*&=\sup\left\{y\ge0:\int_{[y,+\infty)}(x-y)\,\mu(\mathrm{d}x)\ge c\right\}.
\end{align*}

\begin{lemma}
For any $t>0$ and $c>0$, both $S_t$ and $T_c$ preserve first-order stochastic dominance and second-order stochastic dominance.
\begin{itemize}
    \item $\mu\succsim_1\tilde\mu$ implies $S_t\mu\succsim_1S_t\tilde\mu$ and $T_c\mu\succsim_1 T_c\tilde\mu$.
    \item $\mu\succsim_2\tilde\mu$ implies $S_t\mu\succsim_2S_t\tilde\mu$ and $T_c\mu\succsim_2 T_c\tilde\mu$.
\end{itemize}
\end{lemma}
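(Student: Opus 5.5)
The plan is to reduce both dominance orders to simple test families and then check the two operators separately. For $\succsim_1$ I will use the quantile (comonotone) coupling. For $\succsim_2$ I will use the representation from the proof of Proposition~\ref{prop:DiscreteOptimal1}. That proof shows that every non-decreasing concave $g$ on $\RR^+$ can be written as $g(x)=g(0)+(g(0+)-g(0))\bone_{(0,\infty)}(x)+\int f'(a)\bone_{\{a\le x\}}\,\d a$. After an integration by parts, this expresses $g$ as a nonnegative mixture of $\bone_{(0,\infty)}$ and the functions $x\mapsto x\wedge a$ with $a\in(0,\infty]$, plus a constant. Hence $\mu\succsim_2\tilde\mu$ if and only if:
\begin{itemize}
\item $m_\mu(a):=\int (x\wedge a)\,\mu(\d x)\ge m_{\tilde\mu}(a)$ for all $a\in(0,\infty]$;
\item the jump at $0$ is also respected, i.e.\ $\mu((0,\infty))\ge\tilde\mu((0,\infty))$.
\end{itemize}

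\textbf{The operator $T_c$.} For $\succsim_1$, let $q,\tilde q$ be the quantile functions of $\mu,\tilde\mu$, so that $q\ge\tilde q$. The thresholds are characterized by $\int_0^1(q(s)-x^*)^+\,\d s=c$, so $x^*\ge\tilde x^*$. Moreover $T_c\mu$ has quantile function $q\wedge x^*$, which dominates $\tilde q\wedge\tilde x^*$ pointwise; this gives $T_c\mu\succsim_1T_c\tilde\mu$. For $\succsim_2$, I compute $m_{T_c\mu}$ directly.
\begin{itemize}
\item If $a\ge x^*$, then $(X\wedge x^*)\wedge a=X\wedge x^*$, so $m_{T_c\mu}(a)=m_\mu(\infty)-c$.
\item If $a\le x^*$, then $m_{T_c\mu}(a)=m_\mu(a)\le m_\mu(x^*)=m_\mu(\infty)-c$.
\end{itemize}
Hence
\begin{align*}
m_{T_c\mu}(a)=\min\{m_\mu(a),\,m_\mu(\infty)-c\}.
\end{align*}
This expression is monotone in the function $m_\mu$, so $m_\mu\ge m_{\tilde\mu}$ gives $m_{T_c\mu}\ge m_{T_c\tilde\mu}$. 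The mass on $(0,\infty)$ is also preserved, because $T_c$ never sends positive points to $0$ when $c<\int x\,\mu(\d x)$.

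\textbf{The operator $S_t$.} I will reduce to a statement about a single initial point. Let $h_g(z):=\int g\,\d(S_t\delta_z)=\EE[g(X^z_t)]$, where $X^z$ is Brownian motion started at $z$ and absorbed at $0$. By linearity, $\int g\,\d S_t\mu=\int h_g\,\d\mu$. It therefore suffices to show two things:
\begin{itemize}
\item $h_g$ is non-decreasing whenever $g$ is;
\item $h_g$ is non-decreasing and concave whenever $g$ is.
\end{itemize}
Monotonicity follows from the synchronous coupling: for $z\le z'$, use the same $B$. Then $\tau^z\le\tau^{z'}$ and $X^z_t\le X^{z'}_t$ pathwise.

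For concavity I will use the test functions. When $g=\bone_{(0,\infty)}$, we get $h_g(z)=\PP[\tau^z>t]=2\Phi(z/\sqrt t)-1$, which is concave on $\RR^+$. When $g$ is smooth, concave and non-decreasing (mollify $x\wedge a$), the function $u(t,z)=h_g(z)$ solves $\partial_tu=\frac12\partial_{zz}u$ on $(0,\infty)$ with $u(t,0)=g(0)$. Then $w=\partial_{zz}u$ solves the same heat equation with initial datum $g''\le0$ and boundary value $w(t,0)=2\partial_tu(t,0)=0$. The maximum principle on the half-line (for bounded solutions) gives $w\le0$. I then pass to the limit in the mollification, using monotone or dominated convergence. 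Combining this with the mixture representation yields $S_t\mu\succsim_2S_t\tilde\mu$.

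\textbf{Main obstacle.} I expect the main difficulty to be the concavity of $h_g$ for the absorbed semigroup. This requires justifying the maximum principle with the boundary identity $\partial_{zz}u(t,0)=0$, including near $t=0$ where $g$ need not be compatible with the boundary. If the PDE route becomes messy, the fallback is a direct computation for $g=x\wedge a$ via the method of images. The absorbed law $S_t\delta_z$ restricted to $(0,\infty)$ has density $p_t(x-z)-p_t(x+z)$, and one can differentiate the resulting explicit expression twice in $z$ to check the sign.
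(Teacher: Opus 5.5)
Your proposal is correct. It argues differently from the paper and covers more cases. The paper's proof treats only $T_c$ under $\succsim_2$, and only with the extra hypothesis $\langle\mu,x\rangle=\langle\tilde\mu,x\rangle$. There, convexity of $(x-y)_+$ orders the thresholds as $x^*\le\tilde x^*$. The integrated CDFs $\int_0^yF(x)\,\mathrm{d}x=\langle\mu,(y-x)_+\rangle$ are then compared separately on $[0,x^*]\cup[\tilde x^*,\infty)$ and on $[x^*,\tilde x^*]$. The claims for $S_t$ and the first-order claims are not argued there.

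Since $\langle\mu,(y-x)_+\rangle=y-m_\mu(y)$, you are in fact using the same test family. Your identity $m_{T_c\mu}=\min\{m_\mu,\,m_\mu(\infty)-c\}$, however, turns the comparison into a pure monotonicity statement that never needs to order the thresholds. Those can go either way once the means differ, so the equal-mean restriction disappears. Your quantile coupling for $\succsim_1$, and your reduction of both $S_t$ claims to monotonicity and concavity of $z\mapsto\EE[g(X^z_t)]$, supply the cases the paper leaves open.

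Two minor points. First, the condition $\mu((0,\infty))\ge\tilde\mu((0,\infty))$ in your characterization is redundant, because $(x\wedge a)/a\uparrow\bone_{(0,\infty)}(x)$ as $a\downarrow0$. Second, the image formula removes the corner-compatibility difficulty you identified in the maximum-principle route. For non-decreasing, concave, Lipschitz $g$ (e.g.\ $x\wedge a$) one gets directly $\partial_{zz}h_g(z)=\int_{(0,\infty)}[p_t(y-z)-p_t(y+z)]\,g''(\mathrm{d}y)\le0$ for $z>0$. Together with $h_g(0+)=g(0)=h_g(0)$, this extends concavity to the closed half-line. That extension is genuinely needed, since the lemma is applied to laws carrying an atom at $0$.
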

\begin{proof}
We are going to prove
\begin{align*}
\mu\succsim_2\tilde\mu\quad{and}\quad\langle\mu,x\rangle=\langle\tilde\mu,x\rangle\quad\text{implies}\quad T_c\mu\succsim_2 T_c\tilde\mu.
\end{align*}
First, note that for any fixed $y$, the function $x\mapsto(x-y)_+$ is convex and thus
\begin{align*}
\langle\mu,(x-y)_+\rangle\leq\langle\tilde\mu,(x-y)_+\rangle    
\end{align*}
for any $y>0$. In particular, this implies the ``optimal taxation" threshold $x^*,\tilde x^*$ satisfy $x^*\leq\tilde x^*$. Let $F,\widetilde F$ denote the CDFs of $\mu$ and $\tilde\mu$, respectively. We note that for any $y>0$,
\begin{align*}
\langle\mu,(y-x)_+\rangle=\int_{[0,y]}(y-x)_+\,\d F(x)=\int_0^yF(x)\,\d x.   
\end{align*}
As the function $x\mapsto(y-x)_+$ is convex, we see that
\begin{align*}
\int_0^yF(x)\,\d x=\langle\mu,(y-x)_+\rangle\leq\langle\tilde\mu,(y-x)_+\rangle\leq\int_0^y\widetilde F(x)\,\d x
\end{align*}
for any $y>0$. Now let $G,\widetilde G$ denote the CDFs of $T_c\mu$ and $T_c\tilde\mu$, respectively. We note that
\begin{align*}
G(x)=
\begin{cases}
F(x),\quad&x\in[0,x^*]\\
1,\quad&x\in[x^*,\infty),
\end{cases}
\end{align*}
and likewise for $\widetilde G$. This observation implies in particular
\begin{align*}
\int_0^yG(x)\,\d x\leq\int_0^y\widetilde G(x)\,\d x  
\end{align*}
for $y\in[0,x^*]\cup[\tilde x^*,\infty)$. To prove $T_c\mu\succsim_2 T_c\tilde\mu$, it remains to establish the same comparison for $y\in[x^*,\tilde x^*]$. We start with
\begin{align*}
\langle\tilde\mu,(x-\tilde x^*)_+\rangle&=c=\langle\mu,(x-x^*)_+\rangle,
\end{align*}
which, together with 
\begin{align*}
\langle\mu,x-x^*\rangle=\langle\tilde\mu,x-x^*\rangle,    
\end{align*}
implies that
\begin{align*}
\langle\mu,(x^*-x)_+\rangle+(\tilde x^*-x^*)=\langle\tilde\mu,(\tilde x^*-x)_+\rangle,    
\end{align*}
that is
\begin{align*}
\int_0^{x^*}F(x)\,\d x+(\tilde x^*-x^*)=\int_0^{\tilde x^*}\widetilde F(x)\,\d x.    
\end{align*}
Therefore, for $y\in[x^*,\tilde x^*]$,
\begin{align*}
\int_0^yG(x)\,\d x=\int_0^{x^*}F(x)\,\d x+(y-x^*)=\int_0^{\tilde x^*}\widetilde F(x)\,\d x-(\tilde x^*-y)\leq\int_0^y\widetilde F(x)\,\d x=\int_0^y\widetilde G(x)\,\d x.    
\end{align*}
This concludes the proof.
\end{proof}

\begin{lemma}
For any $t\in\{0,\Delta,2\Delta,...,T\}$, the function $\mu\mapsto V^{\Delta}(t,\mu)$ is monotone in $\mu$ with respect to first-order stochastic dominance in the sense that $V^{\Delta}(t,\mu)\ge V^{\Delta}(t,\tilde\mu)$ provided that $\mu\succsim_1\tilde\mu$ and $\mu(\{0\})=\tilde\mu(\{0\})$.
\end{lemma}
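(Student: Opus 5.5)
We argue by backward induction on the grid $t\in\{0,\Delta,\dots,T\}$, proving the slightly stronger statement that $V^\Delta(t,\cdot)$ is non-decreasing with respect to $\succsim_1$ among measures with equal mass at $0$. At $t=T$ we have $V^\Delta(T,\mu)=\int f\,\d\mu$ with $f$ non-decreasing, so the claim is immediate from the definition of $\succsim_1$. For the inductive step, assume the claim holds at $t+\Delta$, and let $\mu\succsim_1\tilde\mu$ with $\mu(\{0\})=\tilde\mu(\{0\})$.

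The first step is to write the one-step dynamic programming relation. Starting from $\mu$ at time $t$, the law just before $t+\Delta$ is $S_\Delta\mu$. The planner then chooses a post-allocation law $\nu$ such that $S_\Delta\mu\succsim_1\nu$, $\nu$ has the same mass at $0$ as $S_\Delta\mu$ (this encodes the constraint that allocation cannot itself cause default), and the mean drops by at least $c=\alpha\bigl(S_\Delta\mu(\{0\})-\mu(\{0\})\bigr)$. Thus
\begin{align*}
V^\Delta(t,\mu)=\sup_{\nu}V^\Delta(t+\Delta,\nu).
\end{align*}
Monotonicity of the map $\mu\mapsto\nu$ requires care about what happens at the atom. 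We handle this by restricting $\varphi$ to act only on the positive part and treating the mass at $0$ as fixed.

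The second step is the comparison itself. Given any feasible $\tilde\nu$ for $\tilde\mu$, we want to build a feasible $\nu$ for $\mu$ with $\nu\succsim_1\tilde\nu$ and equal zero-mass. The induction hypothesis then gives $V^\Delta(t+\Delta,\nu)\ge V^\Delta(t+\Delta,\tilde\nu)$, and taking suprema finishes. By the previous lemma, $S_\Delta\mu\succsim_1 S_\Delta\tilde\mu$, which gives $S_\Delta\mu(\{0\})\le S_\Delta\tilde\mu(\{0\})$. Since $\mu(\{0\})=\tilde\mu(\{0\})$, the required loss satisfies $c\le\tilde c$.

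This is the main obstacle: after diffusion, the two laws no longer have equal mass at $0$. The induction hypothesis cannot be used directly, and the required losses differ. To get around this, we first "kill" extra mass for $\mu$: move mass $S_\Delta\tilde\mu(\{0\})-S_\Delta\mu(\{0\})$ of the positive part of $S_\Delta\mu$ to $0$. Equivalently, use a quantile coupling $Y\ge\tilde Y$ with $Y\sim S_\Delta\mu$ and $\tilde Y\sim S_\Delta\tilde\mu$, and write $\tilde\nu$ as the law of $\tilde Y-\tilde\xi$ with $\tilde\xi<\tilde Y$ on $\{\tilde Y>0\}$. We then set $\xi:=(Y-\tilde Y)\bone_{\{\tilde Y>0\}}\cdot\theta+\tilde\xi$, for a suitable $\theta\in[0,1]$, so that $Y-\xi\ge\tilde Y-\tilde\xi$ pointwise.

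The difficulty is keeping the zero-mass equal: positions where $\tilde Y=0<Y$ stay positive, so $\nu$ has less zero-mass. To fix this, we need the hypothesis at time $t+\Delta$ in the form "$\nu\succsim_1\tilde\nu$ and $\nu(\{0\})\le\tilde\nu(\{0\})$". That is a true strengthening, since more survivors only help through the monotone terminal payoff and through later losses. So we propagate this weaker zero-mass condition through the induction instead of equality. We also check that the required loss $c\le\tilde c$ is met, because $\EE[\xi]\ge\EE[\tilde\xi]\ge\tilde c\ge c$. Verifying that the loss-constraint bookkeeping is consistent with an unequal atom is the step I expect to need the most care.
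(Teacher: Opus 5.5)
Your induction does not close, and the problem is the hypothesis you propose to propagate. Since $\bone_{(0,\infty)}$ is non-decreasing, $\nu\succsim_1\tilde\nu$ already gives $\nu(\{0\})\le\tilde\nu(\{0\})$. Your ``strengthening'' therefore simply discards the atom condition. In the recursion you wrote, which charges only the new defaults $c=\alpha\bigl(S_\Delta\mu(\{0\})-\mu(\{0\})\bigr)$, the resulting statement is false.

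Take $f(x)=x$, $t=T-\Delta$, $\tilde\mu=p\delta_0+(1-p)\delta_a$ and $\mu=p\delta_\varepsilon+(1-p)\delta_a$, with $a$ large so that the loss is feasible. Absorbed Brownian motion preserves the mean, so your recursion gives $V^\Delta(T-\Delta,\mu)=\int x\,\mu(\mathrm{d}x)-c$. Hence
\begin{align*}
V^\Delta(T-\Delta,\mu)-V^\Delta(T-\Delta,\tilde\mu)=p\bigl(\varepsilon-\alpha\PP[\tau_0\le\Delta\,|\,B_0=\varepsilon]\bigr)\longrightarrow-\alpha p<0\quad\text{as }\varepsilon\downarrow0.
\end{align*}
Extra survivors near $0$ do not ``only help through later losses''. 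They default later and must then be paid for, whereas mass already sitting at $0$ is never charged again. This is exactly where your bookkeeping breaks. The inequality $\tilde c\ge c$ uses $\mu(\{0\})=\tilde\mu(\{0\})$ at the start of the step, and that equality is no longer available once only $\nu(\{0\})\le\tilde\nu(\{0\})$ is propagated. So the step you flagged as needing care cannot be carried out.

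The missing idea is that the loss constraint $\EE[L_{n\Delta}]\ge\alpha\mu_{n\Delta}(\{0\})$ of the discrete problem is cumulative. Payments triggered by the extra early defaults of $\widetilde X$ therefore remain available as credit for the later defaults of $X$. The paper uses this directly, with no recursion:
\begin{itemize}
\item Couple $X_0\ge\widetilde X_0$; equal atoms give $\{X_0=0\}=\{\widetilde X_0=0\}$.
\item Drive both with the same $B$ and apply the same control $L$. Then $X\ge\widetilde X$ pathwise and $\tau\ge\tilde\tau$, so $\EE[L_{n\Delta}]\ge\alpha\PP[\tilde\tau\le n\Delta]\ge\alpha\PP[\tau\le n\Delta]$.
\item The allocation never kills $X$, because it does not kill $\widetilde X\le X$ and $L$ is frozen after $\tilde\tau\le\tau$.
\item Finally $f(X_T)\ge f(\widetilde X_T)$.
\end{itemize}

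If you want to keep a step-by-step induction, you must enlarge the state by the accumulated surplus $k\ge0$, meaning loss paid minus $\alpha$ times defaults since time $t$. Use your coupling $Y\ge\tilde Y$ with $\theta=0$, i.e.\ $\xi=\tilde\xi$. The invariant that then propagates is $\nu\succsim_1\tilde\nu$ together with $k+\alpha\nu(\{0\})\ge\tilde k+\alpha\tilde\nu(\{0\})$. This invariant implies $k\ge\tilde k$, so feasibility for $\widetilde X$ transfers to $X$ at every step. For $k=\tilde k=0$ it is precisely the equal-atom hypothesis of the lemma.
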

\begin{proof}
There exists a coupling $(X_0,\widetilde X_0)$ such that $X_0\sim\mu$, $\widetilde X_0\sim\tilde\mu$ and $X_0\ge\widetilde X_0$. Then any control $L$ that is feasible for the control problem \eqref{eq:DiscreteProblem1} with initial condition $\widetilde X_0$ is also feasible for that with initial condition $X_0$.
\end{proof}

\begin{lemma}\label{lem:DiscreteTight1}
Given any initial condition $X_0\sim\mu$ and a feasible control $L^0$, there exists another feasible control $L$ such that $L_t\leq L_t^0$ and that the condition
\begin{align*}
\EE[L_{n\Delta}]=\alpha\mu_{n\Delta}(\{0\})
\end{align*}
holds exactly for any $n\ge0$, where $\mu_t$ denotes the law of $X_t$ controlled by $L$. This also implies that $L$ attains an objective value no worse than $L^0$.
\end{lemma}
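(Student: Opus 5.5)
We build $L$ recursively over the grid, step by step, keeping a pathwise coupling with the original controlled process, so that at every grid time $L$ charges each surviving bank no more than $L^0$ does. We use the same Brownian motion $B$ and the same $X_0$ for both controls. Write $X^0$, $\tau^0$, $\mu^0_t$ for the state, default time and law under $L^0$, and $X$, $\tau$, $\mu_t$ for the same objects under the new control $L$.

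The induction hypothesis at step $n$ has two parts:
\begin{itemize}
\item[(i)] $L_{k\Delta}\le L^0_{k\Delta}$ a.s. for all $k\le n$;
\item[(ii)] $\EE[L_{k\Delta}]=\alpha\mu_{k\Delta}(\{0\})$ for all $k\le n$.
\end{itemize}
Part (i) gives $X_t\ge X^0_t$ for $t<\tau^0$ and $\tau\ge\tau^0$ pathwise. In particular $\{\tau\le t\}\subset\{\tau^0\le t\}$, so $\mu_t(\{0\})\le\mu^0_t(\{0\})$.

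For the base case $n=0$, define $L_0:=\theta L^0_0$ with $\theta\in[0,1]$. The map $\theta\mapsto\EE[\theta L^0_0]-\alpha\,\PP[X_0-\theta L^0_0\le 0]$ is the natural thing to study, but the jump constraint $\Delta L<X_-$ already forbids allocation-induced default, so $\mu_0(\{0\})=\mu(\{0\})$ does not depend on $\theta$. Since the required level $\alpha\mu(\{0\})\le\EE[L^0_0]$, we can pick $\theta$ to hit it exactly.

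For the inductive step, suppose $L$ has been built up to $n\Delta$ and set $c_{n+1}:=\alpha\mu_{(n+1)\Delta}(\{0\})-\EE[L_{n\Delta}]$. This increment is determined by the dynamics on $[n\Delta,(n+1)\Delta)$, because no control acts strictly between grid times; it is nonnegative since default mass is nondecreasing. We define the jump as a scaled copy of the original jump,
\[
\Delta L_{(n+1)\Delta}:=\theta_{n+1}\,\Delta L^0_{(n+1)\Delta}\,\bone_{\{\tau>(n+1)\Delta-\}}.
\]
We must check three things.
\begin{itemize}
\item \emph{Feasibility.} On $\{\tau^0>(n+1)\Delta-\}$ we have $\Delta L^0<X^0_{-}\le X_{-}$, and scaling by $\theta\le1$ preserves this. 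On banks with $\tau^0\le (n+1)\Delta-<\tau$, which are alive under $L$ but dead under $L^0$, we charge nothing; there $L^0$ is frozen at $L^0_{\tau^0}\ge L$, so (i) persists.
\item \emph{Enough mass.} We need $\EE[\Delta L^0\,\bone_{\{\tau>\cdot\}}]\ge c_{n+1}$. Feasibility of $L^0$ gives $\EE[L^0_{(n+1)\Delta}]\ge\alpha\mu^0_{(n+1)\Delta}(\{0\})$. Combined with (ii), this reduces to showing that the extra default mass created by the original strategy, $\alpha\big(\mu^0(\{0\})-\mu(\{0\})\big)$, plus the accumulated slack $\EE[L^0_{n\Delta}-L_{n\Delta}]$, dominates what is needed.
\item \emph{Choice of $\theta_{n+1}$.} Once there is enough mass, $\theta_{n+1}\in[0,1]$ is chosen by the intermediate value theorem so that the mean equals $c_{n+1}$ exactly.
\end{itemize}

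The main obstacle is the second check. The constraint only controls $\EE[L^0]$ globally, while the loss that $L^0$ placed on banks that defaulted under $L^0$ but survive under $L$ is no longer available. If the scaling argument fails, the fallback is to drop the requirement of scaling $L^0$'s own jump. Any $\mathcal{F}$-measurable jump $0\le\Delta L<X_{-}$ with $L\le L^0$ is acceptable, so we take the largest admissible one,
\[
\Delta L^{\max}:=\min\big(L^0_{(n+1)\Delta}-L_{n\Delta},\;X_{(n+1)\Delta-}-\varepsilon\big)^+ .
\]
On $\{\tau^0>\cdot\}$ this is at least $L^0_{(n+1)\Delta}-L_{n\Delta}$, using $X-X^0=L^0-L$. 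We then bound $\EE[\Delta L^{\max}]$ from below by $\EE[L^0_{(n+1)\Delta}]-\EE[L_{n\Delta}]-(\text{terms on }\{\tau^0\le\cdot<\tau\})$. Each such bank contributes its surplus capital $X_-$, which we compare against the $\alpha$ per unit of default mass that $L^0$ owed for it. If the tightness $\EE X_0>\alpha$ is what makes this work, we will use it, possibly via first-order dominance (the earlier monotonicity lemma) to argue that states with less default are also feasible.

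Finally, the objective. By (i) we have $X_T\ge X^0_T$ pathwise and $\tau\ge\tau^0$. Since $f$ is nondecreasing and defaulted banks sit at $0$, $\EE f(X_T)\ge\EE f(X^0_T)$, so $L$ is no worse than $L^0$.
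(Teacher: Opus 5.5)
\textbf{There is a genuine gap.} You leave open the step you flag as the main obstacle, and under the pathwise constraint $L\le L^0$ it cannot be closed. The ``enough mass'' inequality is false in general, so neither the scaling scheme nor the fallback $\Delta L^{\max}$ works.

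The scaling scheme fails as soon as $L^0$ front-loads, i.e.\ $\Delta L^0_{(n+1)\Delta}=0$ while $c_{n+1}>0$. For the fallback, consider the following feasible $L^0$:
\begin{itemize}
\item take $\mu(\{0\})=0$ and $T=2\Delta$;
\item at time $\Delta$, charge a set $A$ of healthy survivors (say $X_{\Delta-}\in[M,M+1]$) almost all their capital, $L^0_\Delta=(X_{\Delta-}-\eta)\bone_A$;
\item tune $A$ so that $\EE[L^0_\Delta]=\alpha\PP(\tau^0\le 2\Delta)$, and charge nothing at $2\Delta$.
\end{itemize}
Any $L\le L^0$ vanishes off $A$, so the default sets under $L$ and $L^0$ differ only on $A$. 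Put $s:=X_{\Delta-}-L_\Delta\ge\eta$ on $A$. Exactness at $\Delta$, tightness of $L^0$ at $2\Delta$, and $\Delta L_{2\Delta}\le\min(s-\eta,X_{2\Delta-})$ together show that $\EE[L_{2\Delta}]\ge\alpha\PP(\tau\le 2\Delta)$ forces
\begin{align*}
\EE\Big[\Big(\big(s-\eta-\hat W^s_\Delta\big)^+-\alpha\bone_{\{\hat W^s_\Delta>0\}}\Big)\bone_A\Big]\le 0,
\end{align*}
where $\hat W^s$ is a Brownian motion from $s$ absorbed at $0$, independent of $\mathcal F_\Delta$.

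Take $\Delta=1$; by Brownian scaling the general condition is $\alpha<\sqrt{\Delta/(2\pi)}$. Let $\phi,\Phi$ be the standard normal density and c.d.f., and $\bar\Phi=1-\Phi$. Then
$\psi(s):=\EE[(s-\hat W^s_1)^+]-\alpha\PP(\hat W^s_1>0)=\frac{1}{\sqrt{2\pi}}-\phi(2s)+2s\bar\Phi(2s)-\alpha(2\Phi(s)-1)$
satisfies:
\begin{itemize}
\item $\psi(0)=0$ and $\psi(\infty)=\frac{1}{\sqrt{2\pi}}-\alpha$;
\item $\psi'(s)=2\bar\Phi(2s)-2\alpha\phi(s)$ changes sign at most once, because $\bar\Phi(2s)/\phi(s)$ is decreasing.
\end{itemize}
Hence $\psi>0$ on $(0,\infty)$ whenever $\alpha<1/\sqrt{2\pi}$. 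Moreover $\EE[(s-\eta)\bone_A]=\alpha(\PP(\tau^0\le 2\Delta)-\PP(\tau^0\le\Delta))\gtrsim\alpha\PP(A)$ and $s\le M+1$. So the display fails for small $\eta$, whatever $L_\Delta$ you pick.

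The mechanism is this. $L^0$ prepays future losses on a few banks, and under pathwise domination nobody else can be charged later. The deferred charge is lost on those banks whose capital falls in the meantime, roughly $\sqrt{\Delta/(2\pi)}$ per bank, while the default requirement drops by at most $\alpha$ per bank. The hypothesis $\EE X_0>\alpha$ plays no role.

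\textbf{Comparison with the paper.} The paper's route is different: a global monotone iteration in $m$ over all grid times, subtracting the normalized excess uniformly from survivors with a running maximum, then passing to the limit. But it also produces $L\le L^0$ and fails at the same point. Its claim that every iterate $L^m$ is feasible is asserted, not checked. It breaks because a bank killed under $L^m$ keeps its full charge in $L^{m+1}$, yet may survive under $L^{m+1}$. In the example, $L^1_{2\Delta}=L^0_\Delta$ on $A$, and the jump exceeds $X^1_{2\Delta-}$ with positive probability.

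So the missing ingredient is not a sharper estimate inside your coupling. The pathwise requirement $L\le L^0$ must be dropped. Any valid argument for the weaker claim (an exact control with no worse objective) has to move loss across banks, for instance by comparing laws in the increasing-concave order rather than comparing paths.
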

\begin{proof}
We start from defining
\begin{align*}
X_t^0&:=
\begin{cases}
X_0+B_t-L_t^0&\quad t<\tau^0\\
0&\quad t\ge\tau^0,
\end{cases}\\
\tau^0&:=\inf\{t\ge0:\,X_t^0\leq0\},
\end{align*}
and iteratively for $m=1,2,...$ and $n=1,2,...$:
\begin{align*}
L_{n\Delta}^{m+1}:=\max\left\{L_{n\Delta}^m-\frac{\EE[L_{n\Delta}^m]-\alpha\PP[\tau^m\leq n\Delta]}{1-\PP[\tau^m\leq n\Delta]}\bone_{\{\tau^m>n\Delta\}},L_{(n-1)\Delta}^{m+1}\right\}.
\end{align*}
Then each $L^m$ is feasible and $(L_{n\Delta}^m)_{m\ge0}$ is non-increasing for any $n\ge1$. Define $L$ by
\begin{align*}
L_{n\Delta}:=\lim_{m\to\infty}L_{n\Delta}^m.    
\end{align*}
It can be shown that $\tau=\lim_{m\to\infty}\tau^m$ (note also that the $\tau^m$'s and $\tau$ are random variables with values in $\ZZ\Delta$) a.s. and hence $L$ is feasible and
\begin{align*}
L_{n\Delta}=\max\left\{L_{n\Delta}-\frac{\EE[L_{n\Delta}]-\alpha\PP[\tau\leq n\Delta]}{1-\PP[\tau\leq n\Delta]}\bone_{\{\tau>n\Delta\}},L_{(n-1)\Delta}\right\}   
\end{align*}
for any $n\ge1$. One can then induct on $n$ to show that 
\begin{align*}
\EE[L_{n\Delta}]=\alpha\PP[\tau\leq n\Delta]    
\end{align*}
for any $n\ge1$.
\end{proof}

\begin{theorem}
For any $t\in\{0,\Delta,2\Delta,...,T\}$, the function $\mu\mapsto V^{\Delta}(t,\mu)$ is monotone in $\mu$ with respect to second-order stochastic dominance in the sense that $V^{\Delta}(t,\mu)\ge V^{\Delta}(t,\tilde\mu)$ provided that $\mu\succsim_2\tilde\mu$ and $\mu(\{0\})=\tilde\mu(\{0\})$, and the optimal control is given by
\begin{align*}
\varphi^*(x,\mu,c)&=(x-x^*)\bone_{\{x\ge x^*\}}\\
\text{where}\quad x^*&=\sup\left\{y\ge0:\int_{[y,+\infty)}(x-y)\,\mu(\mathrm{d}x)\ge c\right\}.
\end{align*}
\end{theorem}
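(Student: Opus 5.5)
We argue by backward induction on the grid $t=T,T-\Delta,\dots,0$, proving at each step both claims: $\mu\mapsto V^\Delta(t,\mu)$ is monotone for $\succsim_2$ (among measures with the same atom at $0$), and applying the tax operator $T_c$ at time $t$ is optimal. At $t=T$ we have $V^\Delta(T,\mu)=\int f\,\d\mu$. Since $f$ is non-decreasing and concave, $\mu\succsim_2\tilde\mu$ gives $V^\Delta(T,\mu)\ge V^\Delta(T,\tilde\mu)$ directly from the definition of second-order dominance.

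For the inductive step, fix a grid time $t=n\Delta<T$ and let $\mu$ be the pre-allocation law at $t$. By Lemma~\ref{lem:DiscreteTight1} we may restrict to controls for which the budget constraint is tight. The loss to allocate is then a deterministic number $c=\alpha(\mu(\{0\})-\mu_{(n-1)\Delta}(\{0\}))$. Any admissible allocation $\xi_n$ maps $\mu$ to a law $\nu=\cL(X_{n\Delta-}-\xi_n)$ with two properties: $\nu\preceq_1\mu$ on the surviving part, since $\xi_n\ge0$, and $\langle\mu,x\rangle-\langle\nu,x\rangle= c$. So $\nu$ is feasible for the auxiliary problem \eqref{eq:Auxiliary1}, and Proposition~\ref{prop:DiscreteOptimal1} yields $T_c\mu\succsim_2\nu$. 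The strict constraint $\xi_n<X_{n\Delta-}$ means the allocation creates no new defaults, so $\nu(\{0\})=\mu(\{0\})=T_c\mu(\{0\})$. Here we use $x^*>0$, which holds because the remaining mass exceeds $c$; this needs the feasibility condition that the surviving mean exceeds the cumulative loss.

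Next we propagate to the next grid time. The law at $t+\Delta-$ is $S_\Delta\nu$ versus $S_\Delta T_c\mu$, and by the preceding lemma $S_\Delta$ preserves $\succsim_2$. The next loss amount also depends on the law: it is $c'=\alpha(S_\Delta\nu(\{0\})-\nu(\{0\}))$. This is the main obstacle, because the comparison must also control the atom at $0$ after diffusion. The point is that $S_\Delta\rho(\{0\})=\rho(\{0\})+\int\PP_z[\tau_0\le\Delta]\rho(\d z)$, and $z\mapsto \PP_z[\tau_0\le\Delta]=2\Phi(-z/\sqrt\Delta)$ is non-increasing but \emph{not} convex-concave in the right sense globally. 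So I would first try to show directly that $z\mapsto 1-\PP_z[\tau_0\le\Delta]$ is concave on $(0,\infty)$. It is, since $2\Phi(z/\sqrt\Delta)-1$ is concave for $z>0$. Hence $T_c\mu\succsim_2\nu$ implies fewer new defaults under $T_c\mu$.

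The atoms then differ, so the inductive hypothesis (which requires equal atoms) is not immediately applicable. I would handle this by strengthening the induction to the claim that $V^\Delta(t,\cdot)$ is monotone under $\succsim_2$ together with $\mu(\{0\})\le\tilde\mu(\{0\})$ and the corresponding remaining-budget ordering. Concretely, a state with fewer defaults can mimic the other: it allocates the smaller loss, then discards the extra "saved" budget by the same taxing, which keeps the $\succsim_2$ order by the auxiliary proposition. Combined with the inductive hypothesis at $t+\Delta$, this gives $V^\Delta(t,\mu)$ attained by $T_c$, i.e.\ by $\varphi^*$. Finally, monotonicity at $t$ follows because, by the preceding lemma, $T_c$ and $S_\Delta$ preserve $\succsim_2$, and the next-step loss comparison holds by the same concavity argument.
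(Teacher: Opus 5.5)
You have the right device in your last paragraph: the healthier system pays the other system's larger loss $\tilde c$ and applies $T_{\tilde c}$. But the backward induction around it does not close as you formulate it.

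The strengthened hypothesis is false. It asks that the law-indexed value $V^\Delta(t,\cdot)$ be monotone under $\succsim_2$ whenever $\mu(\{0\})\le\tilde\mu(\{0\})$. Your recursion requires that only defaults after $t$ be charged, and in that setting the claim fails. Take one step to go, $\alpha=\Delta=1$, $f(x)=x$, $\mu=\delta_1$ and $\tilde\mu=\frac12\delta_0+\frac12\delta_2$. Then $\mu\succsim_2\tilde\mu$ and $\mu(\{0\})<\tilde\mu(\{0\})$. Absorbed Brownian motion preserves the mean and the new loss must be paid, so $V^\Delta(T-\Delta,\mu)=1-2\Phi(-1)\approx0.68<0.98\approx1-\Phi(-2)=V^\Delta(T-\Delta,\tilde\mu)$, where $\Phi$ is the standard normal distribution function. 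The healthier law has fewer total defaults but more new ones.

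The mimicking step also conflicts with your reduction to tight controls. After paying $\tilde c$ instead of its own $c\le\tilde c$, the healthier system has prepaid, so its future constraint is no longer a function of its law. To close the induction you must enlarge the state to (law, cumulative loss already paid) and order it by ``$\succsim_2$-better law and at least as much loss already paid''. Ordering by slack (loss paid minus $\alpha$ times defaults) is not enough, since both states in the example have zero slack.

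With this ordering your step does work. Because $\bone_{(0,\infty)}$ is itself non-decreasing and concave, $S_\Delta\mu\succsim_2 S_\Delta\tilde\mu$ gives $S_\Delta\mu(\{0\})\le S_\Delta\tilde\mu(\{0\})$, so the payment $\tilde c$ is feasible for the better state. This also makes your separate computation with $2\Phi(z/\sqrt\Delta)-1$ unnecessary.

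The paper avoids intermediate value functions altogether by running your mimic forward in one pass. For any tight control $\widetilde L$ from $\tilde\mu$ with per-step losses $\tilde c_n$, it sets $\mu_{n\Delta}=T_{\tilde c_n}S_\Delta\mu_{(n-1)\Delta}$ with $\mu_0=\mu$. It then iterates $\mu_{(n+1)\Delta}=T_{\tilde c_{n+1}}S_\Delta\mu_{n\Delta}\succsim_2 T_{\tilde c_{n+1}}S_\Delta\tilde\mu_{n\Delta}\succsim_2\tilde\mu_{(n+1)\Delta}$, using the preservation lemma and Proposition~\ref{prop:DiscreteOptimal1}. This over-paying control is feasible because the budget constraint is a cumulative inequality: $\succsim_2$ gives $\mu_{n\Delta}(\{0\})\le\tilde\mu_{n\Delta}(\{0\})$, and equal initial atoms turn this into fewer new defaults. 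Evaluating at $T$ gives the monotonicity.

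Once monotonicity for equal atoms is established, your derivation of $\varphi^*$ is fine: reduce to tight controls by Lemma~\ref{lem:DiscreteTight1}, then use $T_c\mu\succsim_2\nu$ with equal atoms at each grid time.
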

\begin{proof}
We take any control $\widetilde L$ that is feasible for $\widetilde X_0$, and aim to find a control $L$ that is feasible for $X_0$ that attains an objective value no worse than $\widetilde L$. By Lemma \ref{lem:DiscreteTight1}, we can assume the condition
\begin{align*}
\EE[\widetilde L_{n\Delta}]=\alpha\PP[\tilde\tau\leq n\Delta]
\end{align*}
holds exactly for $n\ge1$. Define
\begin{align*}
\tilde c_n&:=\alpha(\PP[\tilde\tau\leq n\Delta]-\PP[\tilde\tau\leq (n-1)\Delta])\\
\tilde\mu_t&:=\cL(\widetilde X_t)\\
\mu_{n\Delta}&:=T_{\tilde c_n}S_{\Delta}\mu_{(n-1)\Delta}\quad\forall n\ge1\quad\text{with}\quad\mu_0:=\mu.
\end{align*}
We can iteratively show that
\begin{align*}
\mu_{(n+1)\Delta}=T_{\tilde c_{n+1}}S_{\Delta}\mu_{n\Delta}\succsim_2 T_{\tilde c_{n+1}}S_{\Delta}\tilde\mu_{n\Delta}\succsim_2\tilde\mu_{(n+1)\Delta}.    
\end{align*}
The desired result then follows.

\end{proof}

\section{The Mean-Field Problem in Continuous Time}
\label{sec:continuous}
After specifying the optimal allocation in discrete-time setting, we  move to the original continuous-time problem, and characterize the optimal control by showing the convergence from discrete-time problem to the continuous-time problem, when the time interval goes to zero. 

Given initial distribution $\mu\in\cP(\RR^+)$, we consider
\begin{align*}
\max&\quad\mu_T((0,+\infty))\\
\mathrm{s.t.}&\quad\mu_t(\mathrm{d}x)=\cL(X_t\in\,\mathrm{d}x)
\end{align*}
where
\begin{align*}
X_t&=
\begin{cases}
X_0+B_t-L_t&\quad t<\tau\\
0&\quad t\ge\tau,
\end{cases}\quad X_0\sim\mu,\\
\tau&=\inf\{t\ge0:\,X_t\leq0\},\\
\end{align*}
and
\begin{align*}
\EE[L_t]\ge\alpha\PP[\tau\leq t],\quad L_t-L_{t-}\leq X_{t-},\quad\forall t\in[0,T],\quad L_t=L_\tau,\quad\forall t\in[\tau,\infty). 
\end{align*}
The feedback control corresponds to a map $R:[0,T]\times\cP(\RR^+)\to\cP(\RR^+)$ such that
\begin{align*}    
R(t,\mu)(\{0\})=0,\quad\mathrm{supp}\, R(t,\mu)\subset\mathrm{supp}\,\mu,
\end{align*}
and setting (formally)
\begin{align*}
\mathrm{d}L_t=\alpha\frac{\mathrm{d}R(t,\mu_t)}{\mathrm{d}\mu_t}(X_t)\,\mathrm{d}\PP[\tau\leq t].
\end{align*}
It is expected that the optimal $R$ is given by
\begin{align*}
R^*(t,\mu)=\delta_{\max\,\mathrm{supp}\,\mu}.   
\end{align*}
Define the dynamic problem 
$$
V(t,\mu):=\sup_R \mu_T((0,+\infty)),
$$
where 
\begin{align*}
X_s&=
\begin{cases}
X_t+B_s-L_s&\quad s<\tau\\
0&\quad s\ge\tau,
\end{cases}\quad X_t\sim\mu,\\
\tau&=\inf\{s\ge0:\,X_s\leq0\},\\
\end{align*}

\subsection{From Discrete to Continuous Time}
\begin{lemma}\label{lem:ContinuousTight1}
Given any initial condition $X_0\sim\mu$ and a feasible control $L^0$, there exists another feasible control $L$ such that $L_t\leq L_t^0$ and that the condition
\begin{align*}
\EE[L_t]=\alpha\PP[\tau\leq t]
\end{align*}
holds exactly for any $t\ge0$. This also implies that $L$ attains an objective value no worse than $L^0$.
\end{lemma}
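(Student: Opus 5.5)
We propose to mimic the proof of Lemma~\ref{lem:DiscreteTight1}: repeatedly remove the excess loss from $L^0$ and pass to a monotone limit. The continuous-time setting needs one extra device, because the constraint must hold for every $t$. Instead of a countable iteration indexed by grid times, we will work with the running excess $e_t:=\EE[L_t]-\alpha\PP[\tau\le t]$ and try to build $L$ as a minimal element.

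\textbf{Step 1 (a lattice of feasible controls).} Consider the class $\mathcal{F}$ of feasible controls $L$ with $L_t\le L_t^0$ for all $t$. Two properties of $\mathcal{F}$ will be used.
\begin{itemize}
\item[(a)] Smaller losses give later default times. If $L\le L'$ pathwise, then $X\ge X'$ before absorption and $\tau\ge\tau'$. Hence $\PP[\tau\le t]\le\PP[\tau'\le t]$.
\item[(b)] Limits of decreasing sequences stay feasible. Take $L^m\downarrow L$ pathwise, with non-decreasing càdlàg paths. Then $\tau^m\uparrow\tau$ a.s., using the continuity of $B$ and the fact that $L$ is frozen after $\tau$. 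The constraint $\EE[L_t]\ge\alpha\PP[\tau\le t]$ passes to the limit by monotone convergence on the left, and by $\{\tau\le t\}=\bigcap_m\{\tau^m\le t\}$ on the right.
\end{itemize}
The jump constraint $L_t-L_{t-}\le X_{t-}$ is preserved under decreasing limits in the same way.

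\textbf{Step 2 (removing excess).} Given $L\in\mathcal{F}$ with $e_t>0$ for some $t$, we modify it as in the discrete lemma. Subtract the excess uniformly from the survivors and keep monotonicity by taking a running maximum:
\[
\widehat L_t:=\sup_{s\le t}\Big(L_s-\frac{e_s}{1-\PP[\tau\le s]}\bone_{\{\tau>s\}}\Big)^+\wedge L_t .
\]
This control is non-decreasing, satisfies $\widehat L\le L$, and by (a) the new default probabilities are no larger. Iterating gives a decreasing sequence $L^m$. By (b) the limit $L^\infty$ is feasible, and it is a fixed point of the map $L\mapsto\widehat L$.

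\textbf{Step 3 (fixed points are tight).} We must show that a fixed point has $e_t\equiv0$. Suppose $e_{t_0}>0$. Let $s_0=\sup\{s\le t_0: e_s=0\}$; right-continuity of $e$ at $0$, together with $e_0=\EE[L_0]\ge0$, lets us handle the start. On $(s_0,t_0]$ the subtracted term is strictly positive on survivors. The running supremum then lowers $L$ strictly at $t_0$ on $\{\tau>t_0\}$, unless $L$ is flat there. If $L$ is flat on $(s_0,t_0]$, then $\EE[L_t]$ is constant while $\PP[\tau\le t]$ is non-decreasing, so $e$ is non-increasing there. This contradicts $e_{s_0}=0<e_{t_0}$, unless a jump of $L$ at $s_0$ creates the excess. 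Such a jump can be trimmed directly, since the jump size enters $\EE[L_{s_0}]$ linearly. This yields $\EE[L_t]=\alpha\PP[\tau\le t]$ for all $t$.

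Since the objective $\int f\,\d\mu_T$ is monotone under the pathwise ordering, $L\le L^0$ gives $\tau\ge\tau^0$. Therefore $X_T\ge X^0_T$ and the value is no worse.

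\textbf{Main obstacle.} We expect the hard part to be Step 3 together with the limit $\tau^m\uparrow\tau$. In continuous time the set $\{\tau\le t\}$ can change discontinuously, because a path can touch $0$ without crossing it. We would handle this using the fact that Brownian paths a.s. cross immediately after touching. If Step 2's explicit map proves awkward, we would fall back to a Zorn/minimal-element argument: take the infimum over a maximal decreasing chain in $\mathcal{F}$ (a countable chain suffices by separability of $t\mapsto\EE[L_t]$). We would then show that a minimal element must be tight, by locally reducing $L$ on survivors near any time of strict excess.
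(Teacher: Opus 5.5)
\textbf{Verdict: there is a genuine gap in Step~3, and it is structural.} Your construction is the paper's own: the same excess-removal map with a running supremum, decreasing iterates, a monotone limit and the fixed-point identity. Step~1 is sound, including $\tau^m\uparrow\tau$ via the freezing of $L^m$ after $\tau^m$.

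\textbf{Why the map cannot produce a tight control.} Because the map uses $\bone_{\{\tau>s\}}$ and $L$ is frozen after $\tau$, the term with $s=t\ge\tau$ is $L_\tau$ itself. So $\widehat L_t=L_\tau$ for all $t\ge\tau$. Hence $X_0+B_\tau-\widehat L_\tau=X_0+B_\tau-L_\tau\le0$, so $\widehat\tau\le\tau$, and with (a), $\widehat\tau=\tau$.
\begin{itemize}
\item The iteration never moves a default time: $\tau^m=\tau^0$ and $L^m_t=L^0_{\tau^0}$ for $t\ge\tau^0$, for every $m$.
\item Whatever is removed before $\tau^0$ is paid back by a jump at $\tau^0$ that pushes the bank into default. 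After a continuous hit this jump equals $\widehat X_{\tau-}$, so it is admissible only under the non-strict jump constraint, not the strict one of the introduction.
\end{itemize}
Consequently every limit of your scheme satisfies
\[
\EE[L^\infty_t]\ge\EE\bigl[L^0_{\tau^0}\bone_{\{\tau^0\le t\}}\bigr],\qquad\PP[\tau^\infty\le t]=\PP[\tau^0\le t].
\]
So it is not tight at any $t$ at which the banks that have defaulted under $L^0$ carry on average more than $\alpha$.

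\emph{Concrete example.} Take $\mu=\frac12(\delta_{x_1}+\delta_{x_2})$ with $x_1>0$, $x_2\ge2\alpha$, and $L^0_t:=x_2\bone_{\{X_0=x_2\}}$. The richer half is written off at time $0$ and the poorer half is never taxed. This is feasible, since $\EE[L^0_t]=x_2/2\ge\alpha\ge\alpha\PP[\tau^0\le t]$. Your map returns $L^0$ unchanged, and $e_t=x_2/2-\alpha\PP[\tau^0\le t]>0$ for all $t$.

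\textbf{Where the Step~3 argument fails.}
\begin{itemize}
\item The fixed-point identity restricts increases of $L$ only at times $t<\tau$; at $t=\tau$ it holds trivially. So ``$L$ is flat on $(s_0,t_0]$'' can only hold along paths that are still alive. A path with $\tau\in(s_0,t_0]$ is flat on $(s_0,\tau)$ and may then jump at $\tau$, so that for $t\in(s_0,t_0]$
\[
e_t-e_{s_0}=\EE\bigl[(L_\tau-L_{\tau-})\bone_{\{s_0<\tau\le t\}}\bigr]-\alpha\PP[s_0<\tau\le t].
\]
This can be positive, so ``$\EE[L_t]$ is constant'' is false.
\item ``Such a jump can be trimmed directly'' is an operation your map never performs. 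Trimming a jump that causes default changes $\tau$, hence $\PP[\tau\le t]$ at all later times, so it is not a linear correction of $\EE[L_{s_0}]$.
\end{itemize}

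\textbf{What is missing.} You need a reduction that also acts on the loss booked at and just before default, so that defaults are genuinely postponed. The modified control then has a default time different from the $\tau$ in the indicator. Its feasibility $\EE[\widehat L_t]\ge\alpha\PP[\widehat\tau\le t]$ and the monotonicity of the scheme must then be proved afresh; this is the real content of the lemma. Your Zorn fallback, which reduces $L$ only on survivors, meets the same obstruction. The paper's proof runs exactly this iteration and asserts the final step (``one can then show'') without argument, so it does not supply the missing idea either.
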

\begin{proof}
We start from defining
\begin{align*}
X_t^0&:=
\begin{cases}
X_0+B_t-L_t^0&\quad t<\tau^0\\
0&\quad t\ge\tau^0,
\end{cases}\\
\tau^0&:=\inf\{t\ge0:\,X_t^0\leq0\},
\end{align*}
and iteratively for $m=1,2,...$ and $t\ge0$:
\begin{align*}
L_t^{m+1}:=\sup_{s\in[0,t]}\left(L_s^m-\frac{\EE[L_s^m]-\alpha\PP[\tau^m\leq s]}{1-\PP[\tau^m\leq s]}\bone_{\{\tau^m>s\}}\right).
\end{align*}
Then each $L^m$ is feasible and $(L_t^m)_{m\ge0}$ is non-increasing in $m$ for any $t\ge0$. Define $L$ by
\begin{align*}
L_t:=\lim_{r\downarrow t}\lim_{m\to\infty}L_r^m.    
\end{align*}
It can be shown that $\tau=\lim_{m\to\infty}\tau^m$ a.s. and hence $L$ is feasible and
\begin{align*}
L_t=\sup_{s\in[0,t]}\left(L_s-\frac{\EE[L_s]-\alpha\PP[\tau\leq s]}{1-\PP[\tau\leq s]}\bone_{\{\tau>s\}}\right)
\end{align*}
for any $t\ge0$ that is a continuity point of $L$. One can then show that 
\begin{align*}
\EE[L_t]=\alpha\PP[\tau\leq t]    
\end{align*}
for any $t\ge0$.
\end{proof}

\begin{lemma}
For any $\mu\in\cP(\RR^+)$, we have
\begin{align*}
V^{\Delta}(0,\mu)\ge V(0,\mu).    
\end{align*}
\end{lemma}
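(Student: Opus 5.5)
Fix an arbitrary feasible continuous-time control $L^0$ for initial law $\mu$. We will build a feasible discrete-time control $L^\Delta$ whose terminal law second-order dominates the continuous-time terminal law. Taking the supremum over $L^0$ then gives $V^\Delta(0,\mu)\ge V(0,\mu)$. The intuition is that the discrete problem only asks the aggregate loss to be paid at grid times, so payment is delayed relative to continuous time. Delaying payment can only help, because the loss then sits on banks that have not yet been pushed toward zero.

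\textbf{Step 1: reduce to a tight control.} By Lemma~\ref{lem:ContinuousTight1} we may assume $\EE[L^0_t]=\alpha\PP[\tau^0\le t]$ for all $t$. Write $\mu^0_t$ for the law of the continuous-time state, so that $\mu^0_t(\{0\})=\PP[\tau^0\le t]$.

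\textbf{Step 2: compare one step.} Put $c_n:=\alpha\bigl(\mu^0_{n\Delta}(\{0\})-\mu^0_{(n-1)\Delta}(\{0\})\bigr)$ and define the discrete-time laws recursively by
\begin{align*}
\mu_{n\Delta}:=T_{c_n}S_\Delta\mu_{(n-1)\Delta},\qquad \mu_0=\mu .
\end{align*}
We prove by induction that $\mu_{n\Delta}\succsim_2\mu^0_{n\Delta}$ and $\mu_{n\Delta}(\{0\})\le\mu^0_{n\Delta}(\{0\})$. For the inductive step, compare $S_\Delta\mu^0_{(n-1)\Delta}$ with $\mu^0_{n\Delta}$ using a pathwise coupling with the same $X_{(n-1)\Delta}$ and the same Brownian increments. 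The uncontrolled process $\hat X$ on $[(n-1)\Delta,n\Delta)$ satisfies $\hat X\ge X^0$ up to absorption, so $\hat\tau\ge\tau^0$. Moreover, on survivors,
\begin{align*}
\hat X_{n\Delta-}-X^0_{n\Delta}=L^0_{n\Delta}-L^0_{(n-1)\Delta}.
\end{align*}
Hence $\mu^0_{n\Delta}$ is obtained from $S_\Delta\mu^0_{(n-1)\Delta}$ by a transformation that is first-order dominated and removes mass at least $c_n$. The extra defaults among paths that $\hat X$ keeps alive should be counted as removing mass down to $0$. By Proposition~\ref{prop:DiscreteOptimal1}, $T_{c_n}S_\Delta\mu^0_{(n-1)\Delta}\succsim_2\mu^0_{n\Delta}$. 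Combining this with the inductive hypothesis and the lemma that $S_\Delta$ and $T_c$ preserve $\succsim_2$ gives
\begin{align*}
\mu_{n\Delta}\succsim_2 T_{c_n}S_\Delta\mu^0_{(n-1)\Delta}\succsim_2\mu^0_{n\Delta}.
\end{align*}
The final link is the same chain used in the proof of the discrete second-order monotonicity theorem.

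\textbf{Step 3: check feasibility of the discrete control.} The discrete control must pay $\alpha$ times its own default mass, not $\alpha$ times the continuous-time default mass. Since $f=\bone_{(0,\infty)}$ is non-decreasing and concave, $\succsim_2$ gives $\mu_{n\Delta}((0,\infty))\ge\mu^0_{n\Delta}((0,\infty))$. So the discrete chain has no more defaults, and paying $c_n$ overpays; the feasibility constraint is an inequality, so overpaying is allowed. We must also check that $c_n$ does not exceed the available mass $\int x\,S_\Delta\mu_{(n-1)\Delta}(\d x)$ and that the strict inequality $\Delta L<X_{n\Delta-}$ holds. Both follow from the mean bounds implied by $\succsim_2$ together with the analogous properties of $L^0$. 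If strictness fails at an atom, we perturb $c_n$ slightly and pass to the limit.

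\textbf{Main obstacle.} The delicate point is the one-step comparison in Step 2. Paths that default in continuous time only because of loss increments inside $(\,(n-1)\Delta,n\Delta)$ must be represented as part of a feasible tax on $S_\Delta\mu^0_{(n-1)\Delta}$. I would handle this by writing $\mu^0_{n\Delta}=(\mathrm{id}-\psi)_\#S_\Delta\mu^0_{(n-1)\Delta}$ through the coupling above, with $0\le\psi\le\mathrm{id}$. The total removed mean is then
\begin{align*}
\EE\bigl[\min(\hat X_{n\Delta-},\,\cdot)\bigr]\ge c_n ,
\end{align*}
which makes it a feasible point of the auxiliary problem~\eqref{eq:Auxiliary1}. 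The non-strict inequality there, where $\nu$ may place mass at $0$, is harmless. Finally, the objective at $T$ follows from the last grid step and monotonicity.
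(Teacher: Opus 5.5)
Your proposal is correct and follows essentially the same route as the paper. Both arguments reduce to a tight control via Lemma~\ref{lem:ContinuousTight1}, build the chain $\mu_{n\Delta}=T_{c_n}S_\Delta\mu_{(n-1)\Delta}$ with $c_n$ taken from the continuous-time default increments, and propagate $\succsim_2$ using the preservation lemma together with Proposition~\ref{prop:DiscreteOptimal1}. You additionally supply the one-step comparison and the feasibility check (the chain has no more defaults, so paying $c_n$ overpays), which the paper leaves implicit. One fix is needed: the removed mean equals $c_n$ because $\EE[\hat X_{n\Delta}]=\EE[X^0_{(n-1)\Delta}]$ by optional stopping for absorbed Brownian motion, not because of a pathwise bound. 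On paths killed inside the interval, $\hat X_{n\Delta}$ is not pathwise comparable to that path's loss, so your displayed $\EE[\min(\hat X_{n\Delta-},\cdot)]$ formula should be replaced by this martingale identity.
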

\begin{proof}
We take any control $\widetilde L$ that is feasible for $\widetilde X_0$ in the continuous-time problem, and aim to find a control $L$ that is feasible for $X_0$ that attains an objective value no worse than $\widetilde L$ in the $\Delta$-discretized problem. By Lemma \ref{lem:ContinuousTight1}, we can assume the condition
\begin{align*}
\EE[\widetilde L_t]=\alpha\PP[\tilde\tau\leq t]
\end{align*}
holds exactly for $t\ge0$. Define
\begin{align*}
\tilde c_n&:=\alpha(\PP[\tilde\tau\leq n\Delta]-\PP[\tilde\tau\leq (n-1)\Delta])\\
\tilde\mu_t&:=\cL(\widetilde X_t)\\
\mu_{n\Delta}&:=T_{\tilde c_n}S_{\Delta}\mu_{(n-1)\Delta}\quad\forall n\ge1\quad\text{with}\quad\mu_0:=\mu.
\end{align*}
We can iteratively show that
\begin{align*}
\mu_{(n+1)\Delta}=T_{\tilde c_{n+1}}S_{\Delta}\mu_{n\Delta}\succsim_2 T_{\tilde c_{n+1}}S_{\Delta}\tilde\mu_{n\Delta}\succsim_2\tilde\mu_{(n+1)\Delta}.    
\end{align*}
The desired result then follows.
\end{proof}

\begin{lemma}
For any $\mu\in\cP(\RR^+)$ with $\langle\mu,x\rangle>\alpha$, we have
\begin{align*}
V(0,\mu)\ge\limsup_{\Delta}V^{\Delta}(0,\mu).    
\end{align*}
\end{lemma}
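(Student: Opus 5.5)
Fix $\varepsilon>0$ and choose $\Delta_k\downarrow0$ with $V^{\Delta_k}(0,\mu)\to\limsup_\Delta V^\Delta(0,\mu)$. By the discrete-time Theorem, for each $k$ the value $V^{\Delta_k}(0,\mu)$ is attained by the cutoff rule $\varphi^*$. Let $(X^k,L^k)$ be the corresponding controlled pair, with thresholds $x^*_{k,n}$ at the grid times. We may take the losses tight, $\EE[L^k_{n\Delta_k}]=\alpha\PP[\tau^k\le n\Delta_k]$, by Lemma~\ref{lem:DiscreteTight1}. The plan has three steps: extract a limit of the cutoff-controlled discrete systems as $k\to\infty$; show that the limit control is feasible for the continuous-time problem; and show that the objective passes to the limit. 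Together these give $V(0,\mu)\ge\lim_k V^{\Delta_k}(0,\mu)-\varepsilon$.

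\textbf{Compactness.} Write $\Lambda^k_t:=x^*_{k,n}$ for $t\in[n\Delta_k,(n+1)\Delta_k)$, the piecewise-constant boundary. Under the cutoff rule, every surviving particle satisfies
\begin{align*}
L^k_t=\max_{m\le n}\bigl(X_0+B_{m\Delta_k}-L^k_{(m-1)\Delta_k}-\Lambda^k_{m\Delta_k}\bigr)^+ \quad\text{(cumulatively)},
\end{align*}
which is a discrete Skorokhod map of $X_0+B-\Lambda^k$ at the upper barrier. So the pair $(\Lambda^k,L^k)$ determines everything, and $L^k$ is non-decreasing and adapted. Its mean $\EE[L^k_t]\le\alpha$ is uniformly bounded, since defaults have probability at most one. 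I would then use the Meyer--Zheng topology, or the M1 topology on nondecreasing paths, to get tightness of $(X_0,B,L^k)$ on $D[0,T]$. A subsequence then converges in law to some $(X_0,B,L)$, possibly on an enlarged probability space with relaxed, weakly adapted controls. The boundaries $\Lambda^k$ should also be monotone-like or of bounded variation. If not, I would use only the laws $\mu^k_t$ and their convergence.

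\textbf{Feasibility and the objective.} Define $X$ and $\tau$ from the limit $(X_0,B,L)$. I need three things. First, $L$ is nondecreasing, stops at $\tau$, and has jumps $\Delta L_t\le X_{t-}$; these follow from closedness of the corresponding sets under the limit. Second, $\EE[L_t]\ge\alpha\PP[\tau\le t]$. This needs $\PP[\tau^k\le t]\to\PP[\tau\le t]$, or at least the correct inequality direction: $\limsup_k\PP[\tau^k\le t]\le\PP[\tau\le t]$, because hitting zero is an upper semicontinuous event. Here I would use that the discrete $\tau^k$ are checked along the continuous Brownian path between grid times, so $\inf_{s\le t}X^k_s\le0$. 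Closedness of $\{\inf_{s\le t} x_s\le0\}$ then gives the inequality, since $\EE[L^k_t]\to\EE[L_t]$ at continuity points by uniform integrability. Third, the objective. The objective $\PP[\tau>T]$ has the wrong semicontinuity, so I would show that $\PP[\inf_{s\le T}X_s=0,\ \tau>T]=0$. A Brownian path touching zero without crossing it has probability zero, and this stays true here because $L$ is nondecreasing, so $X$ is Brownian plus a decreasing process; I would condition on $L$ and use the regularity of Brownian motion at level zero. For concave $f$ that is nondecreasing and continuous on $(0,\infty)$, weak convergence of $X^k_T$ suffices.

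\textbf{Main obstacle.} The main obstacle is the adaptedness and feasibility of the limit control together with the convergence of the default times. The limit $L$ may only be a relaxed, non-anticipative control, and its definition through $\Lambda$ must survive the limit. I would first try to establish uniform equicontinuity of $t\mapsto\PP[\tau^k\le t]$ from $\mu(\{0\})=0$ and $\langle\mu,x\rangle>\alpha$. This would exclude a macroscopic blow-up, since the cutoff rule never pushes a particle to zero. If a blow-up cannot be excluded, I would fall back on a relaxed formulation: show that relaxed controls have the same value $V$, and apply Lemma~\ref{lem:ContinuousTight1} to the limit to recover a strict-form feasible control.
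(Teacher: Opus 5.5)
There are two genuine gaps: a sign error in the feasibility step, and an unresolved adaptedness problem that the paper avoids by passing to the limit in a different object.

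\textbf{The semicontinuity in your feasibility step is backwards.} At continuity points you have $\EE[L_t]=\lim_k\EE[L^k_t]$ and $\EE[L^k_t]=\alpha\PP[\tau^k\le\lfloor t/\Delta_k\rfloor\Delta_k]\le\alpha\PP[\tau^k\le t]$. So the constraint $\EE[L_t]\ge\alpha\PP[\tau\le t]$ requires a \emph{lower} bound $\liminf_k\PP[\tau^k\le t]\ge\PP[\tau\le t]$, modulo the grid offset. Closedness of $\{\inf_{s\le t}x_s\le0\}$ gives the opposite bound $\limsup_k\PP[\tau^k\le t]\le\PP[\tau\le t]$, i.e.\ $\EE[L_t]\le\alpha\PP[\tau\le t]$, so the constraint may fail. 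That bound is literally the same inequality as $\liminf_k\PP[\tau^k>t]\ge\PP[\tau>t]$, which you correctly flag as useless for the objective. The constraint and the objective both need lower semicontinuity of the default probability. It comes from the open set $\{\inf_{s\le t}x_s<0\}$ together with the fact that the limit path crosses zero immediately after reaching it. The same property is needed for ``$L$ stops at $\tau$'': otherwise $\lim_k\tau^k$ can exceed $\tau$, and $L$ may keep increasing after $\tau$. Your justification (``condition on $L$'') is not valid, because $L$ is a functional of $B$. You would need the strong Markov property of $B$ at $\tau$ in a filtration to which the weak limit is compatible.

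\textbf{Adaptedness of the limit control.} This is the deeper gap, the one you list as the main obstacle, and your fallback does not close it. Limits in law of the random processes $L^k$ give at best a weak or relaxed control on an enlarged space, which is not obviously admissible for the problem defining $V$ (feedback allocations on the given Brownian set-up). Proving that relaxed controls have the same value is the whole difficulty. Lemma~\ref{lem:ContinuousTight1} cannot help: it only lowers a feasible control to make the budget tight, and does not turn a relaxed control into a strict one.

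\textbf{The missing idea}, which you half-state, is that under the cutoff rule the control is encoded by the \emph{deterministic} threshold path $\Lambda^\Delta$:
$L^\Delta_{n\Delta}=\max_{m\le n}(X_0+B_{m\Delta}-\Lambda^\Delta_{m\Delta})^+$. (Your display mixes this running-maximum form with the increment form $(X_0+B_{m\Delta}-L^\Delta_{(m-1)\Delta}-\Lambda^\Delta_{m\Delta})^+$.) The paper passes to the limit in $\Lambda^\Delta$, not in $L^\Delta$, in two steps:
\begin{itemize}
\item It shows $\Lambda^\Delta\ge\kappa^*>0$ uniformly, by induction on $\langle\mu^\Delta_{n\Delta},(x-\kappa^*)_+\rangle\ge\alpha-\alpha\mu^\Delta_{n\Delta}(\{0\})$, using $\langle\mu,x\rangle>\alpha$.
\item Its key estimate is a one-sided modulus $\Lambda^\Delta_{m\Delta}-\Lambda^\Delta_{n\Delta}\le C\omega((m-n)\Delta)$. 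By the appendix lemma this controls the $M_1$ oscillation and gives precompactness in $(\cD,M_1)$. No monotonicity or bounded variation of $\Lambda^\Delta$ is required.
\end{itemize}
A limit point $\Lambda$ is deterministic, so $L_t=\sup_{s\le t}(X_0+B_s-\Lambda_s)^+$ is a strong, $B$-adapted reflection control on the original space. Because the reflection acts only at level $\Lambda\ge\kappa^*$, the state below $\kappa^*$ moves as a free Brownian motion. This gives the no-touching property and convergence of $\PP[\tau^\Delta\le t]$ in both directions. Hence $\EE[L_t]=\alpha\PP[\tau\le t]$ and the objective converges, with no relaxed formulation needed.
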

\begin{proof}
\textbf{Step 1.} We consider the optimally controlled probability flow in the $\Delta$-discretized problem:
\begin{align*}
\mu_{(n+1)\Delta}^\Delta&:=T_{c_{n+1}}S_\Delta\mu_{n\Delta}^\Delta,\quad n\ge0,\\
\mu_t^\Delta&:=S_{t-n\Delta}\mu_{n\Delta}^\Delta,\quad t\in[n\Delta,(n+1)\Delta).
\end{align*}
Then $V^\Delta(0,\mu)=\langle\mu_T^\Delta,f\rangle$. Let $\kappa_n^\Delta$ be the tax threshold corresponding to $T_{c_n}S_\Delta\mu_{(n-1)\Delta}^\Delta$, and define the auxiliary boundary $\Lambda^\Delta$ via
\begin{align*}
\Lambda_t^\Delta:=\sum_{n=0}^\infty\kappa_n^\Delta\bone_{[n\Delta,(n+1)\Delta)}(t).    
\end{align*}
Then the controlled system can be written as
\begin{align*}
X_t^\Delta&=X_0+B_t-L_t^\Delta,\\
L_t^\Delta&=\max_{n:n\Delta\leq t}(X_0+B_{n\Delta}-\Lambda_{n\Delta}^\Delta)_+.
\end{align*}
\textbf{Step 2.} In this step, we prove the family $\{\Lambda^\Delta\}_{\Delta\in(0,1]}$ is pre-compact in $\cD$ under the $M_1$ topology. The key estimate is to establish the inequality
\begin{align*}
\Lambda_{m\Delta}^\Delta-\Lambda_{n\Delta}^\Delta&\leq C\omega((m-n)\Delta) \\
\text{with}\quad w(h)&:=\sqrt{h\log(1/h)}.
\end{align*}
We first prove that $\{\Lambda^\Delta\}_{\Delta\in(0,1]}$ is uniformly bounded away from $0$ assuming that $\EE[X_0]>\alpha$. Indeed, let
\begin{align*}
\kappa^*:=\sup\left\{\kappa>0:\,\langle\mu,(x-\kappa)_+\rangle=\alpha\right\}>0.    
\end{align*}
We show that 
\begin{align*}
\langle \mu_{n\Delta}^\Delta,(x-\kappa^*)_+\rangle\ge\alpha-\alpha\mu_{n\Delta}^\Delta(\{0\}),\quad\text{and}\quad\kappa_n^\Delta\ge\kappa^*    
\end{align*}
via an induction on $n$. For $n=0$, it follows from the definition of $\kappa^*$. As the function $x\mapsto(x-\kappa^*)_+$ is convex,
\begin{align*}
\langle S_\Delta\mu_{n\Delta}^\Delta,(x-\kappa^*)_+\rangle&\ge\langle \mu_{n\Delta}^\Delta,(x-\kappa^*)_+\rangle\ge\alpha-\alpha\mu_{n\Delta}^\Delta(\{0\})\\
&\ge\alpha(S_\Delta\mu_{n\Delta}^\Delta(\{0\})-\alpha\mu_{n\Delta}^\Delta(\{0\})=c_{n+1}.
\end{align*}
As a result, $\kappa_{n+1}^\Delta\ge\kappa^*$ and
\begin{align*}
\langle\mu_{(n+1)\Delta}^\Delta,(x-\kappa^*)_+\rangle&=\langle T_{c_{n+1}}S_\Delta\mu_{n\Delta}^\Delta,(x-\kappa^*)_+\rangle=\langle S_\Delta\mu_{n\Delta}^\Delta,(x-\kappa^*)_+\rangle-c_{n+1}\\
&\ge\alpha-\alpha\mu_{n\Delta}^\Delta(\{0\})-c_{n+1}=\alpha-\alpha\mu_{(n+1)\Delta}^\Delta(\{0\}).
\end{align*}
This proves the induction step.

\textbf{Step 3.} Take a limit point of $\{\Lambda^\Delta\}_{\Delta\in(0,1]}$ along a sequence as $\Delta\downarrow0$. We need to show that the controlled system converges to
\begin{align*}
X_t&=X_0+B_t-L_t,\\
L_t&=\sup_{s\in[0,t]}(X_0+B_s-\Lambda_s)_+
\end{align*}
with
\begin{align*}
\EE[L_t]=\alpha\PP[\tau\leq t].    
\end{align*}
This shows that $L$ is a feasible control in continuous time, $\mu_t:=\cL(X_t)=\lim_{\Delta\downarrow0}\mu_t^\Delta$, and
\begin{align*}
V(0,\mu)\ge\langle\mu_T,f\rangle=\lim_{\Delta\downarrow0}\langle\mu_T^\Delta,f\rangle=\lim_{\Delta\downarrow0}V^\Delta(0,\mu).
\end{align*}
\end{proof}


\begin{proposition}
\label{prop: value_convergence}
For any $\mu\in\cP(\RR^+)$, we have the convergence of value functions:
\begin{align*}
\lim_{\Delta\downarrow0}V^\Delta(0,\mu)=V(0,\mu).    
\end{align*}
Moreover, the family of auxiliary boundaries $\{\Lambda^\Delta\}_{\Delta\in(0,1]}$ converges as $\Delta\downarrow0$ in $\cD$ to a limit $\Lambda$ that is an optimal control for the continuous-time problem.
\end{proposition}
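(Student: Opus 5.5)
The plan is to sandwich $V(0,\mu)$ between $\liminf$ and $\limsup$ of $V^\Delta(0,\mu)$ using the two preceding lemmas. The lower bound lemma gives $V^{\Delta}(0,\mu)\ge V(0,\mu)$ for every $\Delta>0$, hence $\liminf_{\Delta\downarrow0}V^\Delta(0,\mu)\ge V(0,\mu)$. The upper bound lemma gives $V(0,\mu)\ge\limsup_{\Delta\downarrow0}V^\Delta(0,\mu)$ whenever $\langle\mu,x\rangle>\alpha$, which is our standing assumption on the initial law (if $\langle\mu,x\rangle\le\alpha$ the problem is degenerate and I would treat it separately or exclude it by the assumption). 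Combining the two yields $\lim_{\Delta\downarrow0}V^\Delta(0,\mu)=V(0,\mu)$.

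For the second claim, I use the $M_1$ pre-compactness of $\{\Lambda^\Delta\}$ from Step 2 of the previous proof. Let $\Lambda$ be any subsequential limit, with $L_t=\sup_{s\le t}(X_0+B_s-\Lambda_s)_+$. By Step 3, $L$ is feasible, satisfies $\EE[L_t]=\alpha\PP[\tau\le t]$, and $\langle\mu_T,f\rangle=\lim V^{\Delta}(0,\mu)=V(0,\mu)$ along that subsequence. So every limit point is an optimal control, and it remains to prove convergence of the whole family. For this I would show that the limit point is uniquely determined.

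Uniqueness is the main obstacle. The limit $\Lambda$ is implicitly characterised by the constraint $\EE[\sup_{s\le t}(X_0+B_s-\Lambda_s)_+]=\alpha\PP[\tau\le t]$, where $\tau$ depends on $\Lambda$ through the reflected process. My approach is a comparison argument. Take two solutions $\Lambda^1,\Lambda^2$ and let $t_0=\inf\{t:\Lambda^1_t\neq\Lambda^2_t\}$. Suppose, say, that $\Lambda^1<\Lambda^2$ on a right-neighbourhood of $t_0$.
\begin{itemize}
\item Monotonicity of the Skorokhod map gives $L^1\ge L^2$ and $X^1\le X^2$ there.
\item Hence more banks default under $\Lambda^1$, so $\PP[\tau^1\le t]\ge\PP[\tau^2\le t]$.
\item However, the gap $\EE[L^1_t-L^2_t]$ is of order $(\Lambda^2-\Lambda^1)$ times the mass near the upper edge, while the extra default probability is only of smaller order over a short interval. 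I would exploit that, by the lower bound $\Lambda^\Delta\ge\kappa^*>0$, the upper edge stays strictly away from the absorbing boundary.
\end{itemize}
This yields a contradiction with the equality constraint for small $t-t_0$. If this local argument is too delicate, my fallback is different: I would use the fact that the discrete flows satisfy $\mu^\Delta_t\succsim_2$ every feasible flow, pass this property to the limit, and conclude that the limiting law $\mu_t$ is unique. I would then recover $\Lambda_t$ as $\max\operatorname{supp}\mu_t$, which is determined by $\mu_t$. Either way, uniqueness of the limit point together with pre-compactness gives convergence of the full family in $\cD$.
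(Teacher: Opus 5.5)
Your sandwich argument for $\lim_{\Delta\downarrow0}V^\Delta(0,\mu)=V(0,\mu)$ and your \emph{fallback} for uniqueness of the limit point are exactly the paper's proof. Any two limit points $\Lambda,\widetilde\Lambda$ generate feasible flows that are both optimal for every non-decreasing concave $f$ and every horizon, because the cutoff rule depends neither on $f$ nor on $T$. Hence $\mu_t\succsim_2\tilde\mu_t\succsim_2\mu_t$, so $\mu_t=\tilde\mu_t$ and $\Lambda_t=\max\operatorname{supp}\mu_t=\widetilde\Lambda_t$. You should make this the argument rather than a fallback. One step needs a citation: the identification $\Lambda_t=\max\operatorname{supp}\mu_t$ is not automatic for a reflecting barrier. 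It is Lemma~\ref{lem:support-identification}, which holds for $t>0$; right-continuity then fixes $\Lambda_0$.

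Your primary comparison route does not work as written, for three reasons.
\begin{itemize}
\item Reducing to $\Lambda^1<\Lambda^2$ on a right-neighbourhood of $t_0$ is not a WLOG. Two c\`adl\`ag (even continuous) boundaries that agree up to $t_0$ can cross infinitely often in every such neighbourhood.
\item The pathwise comparison $L^1\ge L^2$ breaks after absorption. $L^1$ freezes at $\tau^1\le\tau^2$ while $L^2$ may keep increasing.
\item The claimed lower bound on $\EE[L^1_t-L^2_t]$ in terms of $\Lambda^2-\Lambda^1$ needs a quantitative non-degeneracy of the mass reaching the upper edge (essentially positivity of $u(t,\Lambda_t)$), which is nowhere established.
\end{itemize}
More fundamentally, this route would prove uniqueness of \emph{all} probabilistic solutions in the sense of Definition~\ref{def:FBPProbSol}. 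That is the well-posedness of the free-boundary problem, which the paper leaves open and simply assumes as \textup{(A2)} in Assumption~\ref{ass: particle_convergence}. The optimality argument needs only uniqueness among limit points, all of which are optimal. It obtains this at no cost from the $f$- and horizon-independence of the cutoff rule.
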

\begin{proof}
Recall that we have established the compactness of $\{\Lambda^\Delta\}_{\Delta\in(0,1]}$. Take two limit points $\Lambda$, $\widetilde\Lambda$ of $\{\Lambda^\Delta\}_{\Delta\in(0,1]}$ as $\Delta\downarrow0$, and denote by $(\mu_t)$, $(\tilde\mu_t)$ the induced probability flows controlled by $\Lambda$ and $\widetilde\Lambda$, respectively. Then the convergence of $(\mu^\Delta)$ and the optimality imply that 
\begin{align*}
\langle\mu_t,f\rangle=\langle\tilde\mu_t,f\rangle
\end{align*}
for all non-decreasing and concave functions $f:\RR^+\to\RR^+$ and all $t\ge0$. Hence, $\mu_t=\tilde\mu_t$ and 
\begin{align*}
\Lambda_t=\max\mathrm{supp}(\mu_t)=\max\mathrm{supp}(\tilde\mu_t)=\widetilde\Lambda_t.
\end{align*}
\end{proof}

\subsection{The Associated Free Boundary Problem}
\begin{theorem}
\label{thm: mean_field_optimal_control}
The Lebesgue density $u(t,x)$ of $\mu_t$, under the optimal control, is a solution to the following free boundary problem:
\begin{equation}\begin{aligned}
\partial_tu(t,x)&=\frac12\partial_{xx}u(t,x)\quad 0<x<\Lambda_t\\
u(t,0)&=0\\
\frac12\partial_xu(t,\Lambda_t)&=-\dot\Lambda_tu(t,\Lambda_t)\\
u(t,\Lambda_t)&=\alpha\partial_xu(t,0).
\end{aligned}
\end{equation}
Moreover, $L_t$ is given by the Skorokhod reflection map applied to the process $X_0+B_t-\Lambda_t$ up to time $\tau$:
\begin{align*}
L_t=\sup_{s\in[0,t]}(X_0+B_s-\Lambda_s)_+,\quad\forall t\in[0,\tau].   
\end{align*}
As a corollary, $\Lambda_t$ and $\mu_t$ are connected via
\begin{align*}
[0,\Lambda_t]=\mathrm{supp}(\mu_t),\quad\forall t\in(0,T].
\end{align*}
Moreover, $\Lambda_t$ is such that 
\begin{align*}
\EE[L_t]=\alpha\PP[\tau\leq t]\quad\text{where}\quad\tau:=\inf\{t\ge0:\,X_t\leq0\}.   
\end{align*}
\end{theorem}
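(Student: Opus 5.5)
We take as input the optimal boundary $\Lambda$ from Proposition~\ref{prop: value_convergence}, obtained as the $M_1$-limit of the discrete cutoffs $\Lambda^\Delta$. Step~3 of the preceding lemma already shows that the limiting controlled system is
\begin{align*}
X_t=X_0+B_t-L_t,\qquad L_t=\sup_{s\in[0,t]}(X_0+B_s-\Lambda_s)_+,\qquad \EE[L_t]=\alpha\PP[\tau\le t].
\end{align*}
So the Skorokhod representation of $L$ and the implicit characterization of $\Lambda$ are essentially inherited. What remains is to identify the law $\mu_t$ with a weak solution of the free boundary problem and to prove the support identity.

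\textbf{Support identity.} The plan is as follows.
\begin{itemize}
\item Since $L$ is the one-sided Skorokhod reflection of $X_0+B-\Lambda$, we have $X_t\le \Lambda_t$ on $\{t<\tau\}$ for $t>0$. Hence $\mathrm{supp}(\mu_t)\subset[0,\Lambda_t]$.
\item For the reverse inclusion, use $\Lambda_t=\max\mathrm{supp}(\mu_t)$, as in the proof of Proposition~\ref{prop: value_convergence}. This is inherited from the discrete level, where $\kappa_n^\Delta$ is the top of the support of $T_{c_n}S_\Delta\mu^\Delta_{(n-1)\Delta}$. It passes to the limit thanks to the lower bound $\Lambda^\Delta\ge\kappa^*>0$ and the modulus estimate of Step~2.
\item Full support on $[0,\Lambda_t]$ then follows from the positivity of the absorbed Brownian transition density on $(0,\Lambda_t)$, via a support theorem for reflected Brownian motion in the time-dependent interval (Burdzy--Kang--Ramanan).
\end{itemize}

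\textbf{Weak free boundary formulation.} For a test function $\phi\in C^{1,2}$, apply It\^o's formula to $\phi(t,X_{t\wedge\tau})$. Because $dL$ is carried by $\{X_t=\Lambda_t\}$, this gives
\begin{align*}
\frac{d}{dt}\langle\mu_t,\phi(t,\cdot)\rangle=\big\langle\mu_t,\partial_t\phi+\tfrac12\partial_{xx}\phi\big\rangle-\partial_x\phi(t,\Lambda_t)\,\frac{d}{dt}\EE[L_t]+\phi(t,0)\,\frac{d}{dt}\PP[\tau\le t],
\end{align*}
up to the handling of the atom at $0$. This is the weak form of the equations:
\begin{itemize}
\item the heat equation holds in the interior;
\item absorption at $0$ gives $u(t,0)=0$, with the boundary flux $\tfrac12\partial_x u(t,0)=\frac{d}{dt}\PP[\tau\le t]$;
\item the loss constraint $\frac{d}{dt}\EE[L_t]=\alpha\frac{d}{dt}\PP[\tau\le t]$ is the weak form of the mass balance at the moving edge. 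Formally, integrating $\partial_t(xu)$ over $(0,\Lambda_t)$ and using the Stefan-type condition $\tfrac12\partial_xu(t,\Lambda_t)=-\dot\Lambda_t u(t,\Lambda_t)$, which is exactly the zero-flux condition of the reflected density, gives $u(t,\Lambda_t)=\alpha\,\partial_x u(t,0)$.
\end{itemize}

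To make this rigorous, the plan is:
\begin{itemize}
\item derive the interior heat equation and the Dirichlet condition at $0$ from standard parabolic regularity of the absorbed–reflected law;
\item state the two conditions at $\Lambda_t$ in weak form, in the spirit of the weak free-boundary formulation of \cite{ParticleSelectionRami};
\item upgrade them to the pointwise form wherever $\Lambda$ is differentiable.
\end{itemize}

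\textbf{Main obstacle.} The hard step is regularity of $\Lambda$. The compactness in Step~2 only gives the one-sided modulus $\Lambda_{m\Delta}^\Delta-\Lambda_{n\Delta}^\Delta\le C\omega((m-n)\Delta)$ in the $M_1$ topology, and the limit may a priori jump downward. A downward jump would correspond to a lump of mass being taxed at once, and the Skorokhod map would then produce jumps of $L$.

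The first attempt will be to rule out downward jumps using the monotonicity of $\Lambda$ forced by the loss constraint. A jump of $\Lambda$ by size $h$ at time $t$ costs $\langle\mu_{t-},(x-\Lambda_t)_+\rangle\approx u(t,\Lambda_{t-})h^2/2$. This must be matched by the default mass created instantaneously, which is $0$ while $\mu_t$ has a density vanishing at $0$. Hence $\Lambda$ is continuous.

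Next, deduce a two-sided Hölder modulus and local Lipschitz continuity from the parabolic Hopf lemma, using $\partial_x u(t,0)>0$. This makes $\dot\Lambda_t$ and the edge conditions meaningful. If only continuity is obtained, the theorem will be stated in the weak sense and the pointwise boundary conditions interpreted almost everywhere.
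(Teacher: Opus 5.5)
Your Skorokhod representation and balance identity, the easy inclusion $\mathrm{supp}(\mu_t)\subset[0,\Lambda_t]$, filling $(0,\Lambda_t)$ by a support argument, and excluding downward jumps via $\EE[\Delta L_t]=\alpha\PP(\tau=t)=0$ all follow the paper's route. The genuine gap is excluding \emph{upward} jumps of $\Lambda$, which the support identity cannot do without. If $\Lambda_t>\Lambda_{t-}$, then $L_t=L_{t-}$ and $X_t=X_{t-}\le\Lambda_{t-}$, so $\mathrm{supp}(\mu_t)\subset[0,\Lambda_{t-}]\subsetneq[0,\Lambda_t]$. Your filling argument breaks too, since it needs $\inf_{q\in[s,t]}\Lambda_q$ close to $\Lambda_t$.

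You try to obtain this from the discrete scheme, but neither ingredient you cite delivers it.
\begin{itemize}
\item The identity $\max\mathrm{supp}(\mu^\Delta_{n\Delta})=\kappa^\Delta_n$ does not survive the limit. $\max\mathrm{supp}$ is only lower semicontinuous under weak convergence, so you recover just $\max\mathrm{supp}(\mu_t)\le\Lambda_t$, the easy inclusion again. Nothing in your argument prevents the mass of $\mu^\Delta_{n\Delta}$ near $\kappa^\Delta_n$ from vanishing as $\Delta\downarrow0$, and the lower bound $\Lambda^\Delta\ge\kappa^*$ is irrelevant to this.
\item The one-sided modulus $\Lambda^\Delta_{m\Delta}-\Lambda^\Delta_{n\Delta}\le C\omega((m-n)\Delta)$ is only announced in Step~2; what is proved there is $\kappa^\Delta_n\ge\kappa^*$. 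Any proof of that modulus must show the tax $c_n$ is at least of order $\Delta$, i.e.\ that defaults occur at a rate bounded below. Otherwise mass diffusing above the cutoff lets $\kappa^\Delta_n$ drift upward freely. That lower bound on the default rate is precisely the missing idea.
\end{itemize}

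The paper proves it directly for any probabilistic solution (Lemma~\ref{lem:no-upward-jumps}).
\begin{itemize}
\item Since $0\le X_t\le\Lambda_t$ and $\EE[X_t]=\EE[X_0]-\alpha\PP[\tau\le t]$, one has $\Lambda\ge a_*=\EE[X_0]-\alpha>0$. Paths confined to $(0,R)$ with $R<a_*$ therefore never feel the reflection.
\item Interior reachability at time $2t_0/3$ plus the Hopf lemma for the Dirichlet heat kernel on $(0,R)$ give the boundary-layer bound $\mu_{t_0}(\mathrm{d}y)\ge c_1y\,\mathrm{d}y$ on $(0,\varepsilon)$. Hence $\PP(t_0<\tau\le t_0+h)\ge c_2h$.
\item After an upward jump of size $\delta$ at $t_0$ the regulator is idle for a short time. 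The reflection principle gives $\EE[L_{t_0+h}-L_{t_0}]\le C\sqrt h\,e^{-c\delta^2/h}$, contradicting $\EE[L_{t_0+h}-L_{t_0}]=\alpha\PP(t_0<\tau\le t_0+h)$.
\end{itemize}
With both jump directions excluded, $\Lambda$ is continuous on $(0,T]$, and interior reachability alone gives $[0,\Lambda_t]\subset\mathrm{supp}(\mu_t)$. Your ``inherited'' top-of-support step is then unnecessary.

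Two smaller corrections to the downward-jump argument:
\begin{itemize}
\item $\PP(\tau=t)=0$ does not follow from $\mu_t$ having a density vanishing at $0$. The relevant reason is that a jump of $L$ places the state at $\Lambda_t\ge a_*>0$, so it never causes default, while a Brownian first-passage time charges no fixed time.
\item You need only $\PP(X_{t-}\in J,\ \tau\ge t)>0$ for some $J\Subset(\Lambda_t,\Lambda_{t-})$, which is left-limit reachability. You do not need a positive edge value $u(t,\Lambda_{t-})$, which presupposes regularity you do not have.
\end{itemize}

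For the free-boundary equations themselves, the paper offers only a formal computation (and proves the converse: a classical solution yields a probabilistic one). Your It\^o-based weak form is therefore not behind the paper there, with two caveats:
\begin{itemize}
\item If $\mu_t$ includes the atom at $0$, the $\frac12\partial_{xx}\phi$ term integrates only over $(0,\Lambda_t)$, and there is no separate $\phi(t,0)\frac{\mathrm{d}}{\mathrm{d}t}\PP[\tau\le t]$ term. For the alive part that term enters with a minus sign.
\item The Lipschitz regularity of $\Lambda$ ``from the Hopf lemma'' is asserted rather than argued.
\end{itemize}
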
 

\begin{remark}
In the sense of generalized function, $u(t,x)$ satisfies 
\begin{align*}
\partial_tu(t,x)-\frac12\partial_{xx}u(t,x)=-\frac12\partial_xu(t,0)\,\delta_0+\frac12u(t,\Lambda_t)\,\partial_x\delta_{\Lambda_t},\quad t,x\in(0,\infty)\times\RR.   
\end{align*}
\end{remark}

\begin{remark}
It is thus expected that the value function at $t=0$ is given by
\begin{align*}
V(0,\mu)=\EE[f(X_T)],\quad X_0\sim\mu,
\end{align*}
which depends on $\mu$ via $\Lambda_t$ (fixed-point arguments?) in a highly non-linear way.
\end{remark}

\begin{proof}[Intuitive justification for the free-boundary PDE] We enforce the heat equation $\partial_tu(t,x)=\frac12\partial_{xx}u(t,x)$ in the interior $\{0<x<\Lambda_t\}$ and the following two growth conditions:
\begin{align*}
&\frac{\mathrm{d}}{\mathrm{d}t}\int_0^{\Lambda_t}u(t,x)\,\mathrm{d}x=-\frac12\partial_xu(t,0)\quad\text{(banks exit on hitting $0$)}\\
&\frac{\mathrm{d}}{\mathrm{d}t}\int_0^{\Lambda_t}xu(t,x)\,\mathrm{d}x=-\frac\alpha2\partial_xu(t,0)\quad\text{(the loss of total wealth is proportional to defaults)}.
\end{align*}
A straightforward formal calculation shows that 
\begin{align*}
\frac{\mathrm{d}}{\mathrm{d}t}\int_0^{\Lambda_t}u(t,x)\,\mathrm{d}x&=\dot\Lambda_tu(t,\Lambda_t)+\frac12\int_0^{\Lambda_t}\partial_{xx}u(t,x)\,\mathrm{d}x\\
&=\dot\Lambda_tu(t,\Lambda_t)+\frac12\partial_xu(t,\Lambda_t)-\frac12\partial_xu(t,0),\\
\frac{\mathrm{d}}{\mathrm{d}t}\int_0^{\Lambda_t}xu(t,x)\,\mathrm{d}x&=\dot\Lambda_t\Lambda_tu(t,\Lambda_t)+\frac12\int_0^{\Lambda_t}x\partial_{xx}u(t,x)\,\mathrm{d}x\\
&=\dot\Lambda_t\Lambda_tu(t,\Lambda_t)+\frac12\Lambda_t\partial_xu(t,\Lambda_t)-\frac12u(t,\Lambda_t).
\end{align*}
\end{proof}

\begin{definition}\label{def:FBPProbSol}
We say that $(X,\Lambda)$ is a \textit{probabilistic solution} to the free boundary problem \eqref{eq:FBP1}, if the system
\begin{align*}
X_t&=X_0+B_t-L_t,\\
L_t&=\sup_{s\in[0,t]}(X_0+B_s-\Lambda_s)_+
\end{align*}
satisfies
\begin{align*}
\EE[L_t]=\alpha\PP[\tau\leq t].    
\end{align*}
\end{definition}

\begin{lemma}\label{lem:ClassicalImpliesProbabilistic}
Suppose $(u,\Lambda)$ is a classical solution to the free boundary problem~(3.2), and that
\begin{align*}
u(0,\cdot)\geq 0,\qquad \mathrm{supp}\, u(0,\cdot)\subset [0,\Lambda_0],\qquad \int_0^{\Lambda_0}u(0,x)\,\mathrm{d}x=1.
\end{align*}
Let $X_0$ be a random variable with density $u(0,\cdot)$, let $B$ be a Brownian motion, and define
\begin{align*}
L_t&:=\sup_{s\in[0,t]}(X_0+B_s-\Lambda_s)^+,\\
\widetilde X_t&:=X_0+B_t-L_t,\\
\tau&:=\inf\{t\geq0:\,\widetilde X_t\leq0\},\\
X_t&=
\begin{cases}
\widetilde X_t&\quad t<\tau,\\
0&\quad t\geq\tau.
\end{cases}
\end{align*}
Then $(X,\Lambda)$ is a probabilistic solution in the sense of Definition~3.13. Moreover, for every $t\in[0,T]$,
\begin{align*}
\PP[X_t\in \mathrm{d}x]=\PP[\tau\leq t]\delta_0(\mathrm{d}x)+u(t,x)\mathbf{1}_{(0,\Lambda_t)}(x)\,\mathrm{d}x.
\end{align*}
\end{lemma}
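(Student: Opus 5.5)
The plan is to identify the law of the explicitly constructed process $X$ with the measure $\PP[\tau\le t]\delta_0+u(t,\cdot)\bone_{(0,\Lambda_t)}\,\d x$ by a verification (Itô / duality) argument, and then to read off the constraint $\EE[L_t]=\alpha\PP[\tau\le t]$ from the boundary conditions of the free boundary problem. First, for a classical solution we take $\Lambda$ to be $C^1$ (or at least continuous of bounded variation) and $u\in C^{1,2}$ on $\{0<x<\Lambda_t\}$, continuous up to the boundary, with $X_0\le\Lambda_0$ a.s. Then $L$ is exactly the Skorokhod reflection of $X_0+B-\Lambda$ at the moving upper barrier $\Lambda$. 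Hence $\widetilde X_t\le\Lambda_t$ for all $t$, $L$ is continuous, nondecreasing and increases only on $\{\widetilde X_t=\Lambda_t\}$, and $L_0=0$. In particular, for $t<\tau$ the process $\widetilde X$ lives in $(0,\Lambda_t]$. This step is standard; we cite \cite{BurdzyKangRamanan2009}.

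\textbf{Identifying the law.} Let $\rho_t$ denote the sub-probability law of $\widetilde X_t$ on $\{\tau>t\}$. We fix $t_0\in(0,T]$ and $\phi\in C_c^\infty((0,\infty))$. Let $w$ solve the backward heat equation $\partial_sw+\frac12\partial_{xx}w=0$ on $\{0<x<\Lambda_s,\ s<t_0\}$ with
\begin{align*}
w(t_0,\cdot)=\phi,\qquad w(s,0)=0,\qquad \partial_xw(s,\Lambda_s)=0.
\end{align*}
That is, $w$ is the adjoint problem with killing at $0$ and Neumann reflection at $\Lambda$. Applying Itô's formula to $w(s\wedge\tau,\widetilde X_{s\wedge\tau})$ gives
\begin{align*}
\d w=\partial_xw\,\d B-\partial_xw(s,\Lambda_s)\,\d L_s=\partial_xw\,\d B.
\end{align*}
Since $w(\tau,0)=0$, we obtain $\EE[\phi(\widetilde X_{t_0});\tau>t_0]=\int w(0,x)u(0,x)\,\d x$.

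On the PDE side, we differentiate $s\mapsto\int_0^{\Lambda_s}w(s,x)u(s,x)\,\d x$ and integrate by parts twice. The interior terms cancel, and the terms at $x=0$ vanish because $u(s,0)=w(s,0)=0$. The terms at $x=\Lambda_s$ collect to
\begin{align*}
\dot\Lambda_s\,uw+\tfrac12\left(w\,\partial_xu-u\,\partial_xw\right).
\end{align*}
This is zero by $\partial_xw=0$ and the Robin condition $\frac12\partial_xu=-\dot\Lambda u$. Therefore $\int\phi\, u(t_0,\cdot)=\int w(0,\cdot)u(0,\cdot)$, which identifies $\rho_{t_0}=u(t_0,\cdot)\bone_{(0,\Lambda_{t_0})}\,\d x$. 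The atom at $0$ is then $1-\rho_t((0,\infty))=\PP[\tau\le t]$, which gives the claimed formula for $\PP[X_t\in\d x]$.

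\textbf{The constraint and the main obstacle.} Integrating the PDE for $u$ gives
\begin{align*}
\frac{\d}{\d t}\PP[\tau\le t]=\frac12\partial_xu(t,0).
\end{align*}
By the formal computation in the intuitive justification, together with the Robin condition, we also get
\begin{align*}
\frac{\d}{\d t}\int xu=-\frac12u(t,\Lambda_t)=-\frac\alpha2\partial_xu(t,0),
\end{align*}
using $u(t,\Lambda_t)=\alpha\partial_xu(t,0)$. On the other hand, optional stopping applied to $\widetilde X_{t\wedge\tau}=X_0+B_{t\wedge\tau}-L_{t\wedge\tau}$ gives $\EE[X_t]=\EE[X_0]-\EE[L_t]$, since $L$ is frozen after $\tau$ and $X_t=0$ on $\{\tau\le t\}$. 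Comparing the two expressions yields $\EE[L_t]=\alpha\PP[\tau\le t]$.

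The main obstacle is justifying the duality step. We need existence and sufficient regularity of the adjoint solution $w$ in the time-dependent domain, up to the corner at $x=0$ and at the moving Neumann boundary. We also need to legitimize integration by parts near $t=0$, where $u(0,\cdot)$ may not satisfy the compatibility conditions. We would handle this by working on $[\epsilon,t_0]$ with $\phi$ compactly supported, using classical Schauder theory for parabolic problems in non-cylindrical $C^1$ domains, and then letting $\epsilon\downarrow0$ using continuity of $u$ in $L^1$. Alternatively, we would apply Itô's formula directly to $u$-weighted test functions of the form $\psi(s,x)$ with $\partial_x\psi(s,\Lambda_s)=0$, and conclude uniqueness of the resulting weak formulation.
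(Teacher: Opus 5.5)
Your proposal is correct, but it identifies the law by a duality argument, which runs in the opposite direction from the paper's forward argument.

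\textbf{The paper's route.} It applies It\^o's formula to $\phi(X_t)$ for arbitrary $\phi\in C_c^2([0,\infty))$. This yields a forward weak equation for $\mu_t=\cL(X_t)$ that still carries the reflection term $-\int_0^t\phi'(\Lambda_s)\,\mathrm{d}\ell_s$, with $\ell_t=\EE[L_t]$. It then checks, by Leibniz' rule and two integrations by parts, that $\nu_t=m_t\delta_0+u(t,\cdot)\bone_{(0,\Lambda_t)}\,\mathrm{d}x$ together with $\bar\ell_t=\frac12\int_0^tu(s,\Lambda_s)\,\mathrm{d}s$ solves the same equation. It concludes $\mu=\nu$ and $\ell=\bar\ell$ by appealing, without proof, to uniqueness for this linear forward equation. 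The constraint is then read off from $u(t,\Lambda_t)=\alpha\partial_xu(t,0)$.

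\textbf{Your route.} You pick the adjoint $w$ with Dirichlet data at $0$ and Neumann data at $\Lambda$. The reflection term then vanishes in It\^o's formula, and the Robin condition kills the boundary terms in Green's identity. This identifies the law directly, and is in effect the proof of the uniqueness the paper takes for granted. Your optional-stopping / first-moment step for $\EE[L_t]=\alpha\PP[\tau\le t]$ likewise makes explicit something the paper's conclusion $\ell=\bar\ell$ tacitly needs. Since $\ell$ is an additional unknown, one must test with $\phi(x)=x$ on $[0,\sup_t\Lambda_t]$ to separate it from $\mu$.

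\textbf{What each route costs.} The price of your route is analytic.
You must build $w$ on the non-cylindrical domain with enough regularity up to the reflecting boundary for It\^o's formula. Flattening via $y=x/\Lambda_s$, which is legitimate for $\Lambda\in C^1$, reduces this to a mixed problem on $(0,1)$ with time-continuous coefficients.
To exclude an atom of $\rho_{t_0}$ at $\Lambda_{t_0}$, you need test functions that do not vanish there. Taking $\phi\equiv1$ near $\Lambda_{t_0}$ does this and also meets the corner compatibility $\phi'(\Lambda_{t_0})=0$.
The paper's route constructs nothing and produces the weak formulation it reuses in the particle limit, but its rigor rests on exactly the duality you carry out.

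Finally, both arguments use that $L$ is frozen after $\tau$. This is the paper's standing convention, but the displayed definition of $L_t$ in the statement does not literally impose it. Without it, $\EE[L_t]$ would in general exceed $\alpha\PP[\tau\le t]$.
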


\begin{proof}
\textit{Step 1.} As $\Lambda$ is continuous, the Skorokhod problem on the time-dependent domain
$(-\infty,\Lambda_t]$ is well posed. Therefore, $L$ is the unique continuous non-decreasing process with
$L_0=0$ such that
\begin{align*}
\widetilde X_t\leq \Lambda_t,\qquad \int_0^t (\Lambda_s-\widetilde X_s)\,\mathrm{d}L_s=0,\qquad \forall t\in[0,T].
\end{align*}
In particular, the measure $\mathrm{d}L_s$ is supported on the contact set $\{\widetilde X_s=\Lambda_s\}$.

\medskip\noindent
\textit{Step 2.} Let $\phi\in C_c^2([0,\infty))$. Applying It\^o's formula to the stopped process
$\widetilde X_{t\wedge\tau}=X_t$, we obtain
\begin{align*}
\phi(X_t)
&=\phi(X_0)+\int_0^{t\wedge\tau}\phi'(\widetilde X_s)\,\mathrm{d}B_s
+\frac12\int_0^{t\wedge\tau}\phi''(\widetilde X_s)\,\mathrm{d}s
-\int_0^{t\wedge\tau}\phi'(\widetilde X_s)\,\mathrm{d}L_s.
\end{align*}
Since $\mathrm{d}L_s$ is carried by $\{\widetilde X_s=\Lambda_s\}$, the last term becomes
\begin{align*}
\int_0^{t\wedge\tau}\phi'(\widetilde X_s)\,\mathrm{d}L_s
=
\int_0^{t\wedge\tau}\phi'(\Lambda_s)\,\mathrm{d}L_s.
\end{align*}
Taking expectations and noting that the stochastic integral has zero expectation, we get
\begin{align}
\EE[\phi(X_t)]
=
\EE[\phi(X_0)]
+\frac12\EE\Big[\int_0^{t\wedge\tau}\phi''(\widetilde X_s)\,\mathrm{d}s\Big]
-\EE\Big[\int_0^{t\wedge\tau}\phi'(\Lambda_s)\,\mathrm{d}L_s\Big].
\label{eq:weak_forward_process}
\end{align}
Let
\begin{align*}
\mu_t(\d x):=\PP[X_t\in \mathrm{d}x],\qquad \ell_t:=\EE[L_t].
\end{align*}
Then \eqref{eq:weak_forward_process} can be rewritten as
\begin{align}
\langle \phi,\mu_t\rangle
=
\langle \phi,\mu_0\rangle
+\frac12\int_0^t\int_{(0,\Lambda_s)}\phi''(x)\,\mu_s(\mathrm{d}x)\,\mathrm{d}s
-\int_0^t\phi'(\Lambda_s)\,\mathrm{d}\ell_s.
\label{eq:weak_forward_mu}
\end{align}

\medskip\noindent
\textit{Step 3.} Define
\begin{align*}
m_t&:=1-\int_0^{\Lambda_t}u(t,x)\,\mathrm{d}x,\\
\nu_t(\mathrm{d}x)&:=m_t\delta_0(\mathrm{d}x)+u(t,x)\mathbf{1}_{(0,\Lambda_t)}(x)\,\mathrm{d}x,\\
\bar\ell_t&:=\frac12\int_0^t u(s,\Lambda_s)\,\mathrm{d}s.
\end{align*}
We claim that $\nu_t$ satisfies the same weak equation as $\mu_t$.

Indeed, by Leibniz' rule and the PDE $\partial_t u=\frac12\partial_{xx}u$,
\begin{align*}
\frac{\mathrm{d}}{\mathrm{d}t}\Big(\phi(0)m_t+\int_0^{\Lambda_t}\phi(x)u(t,x)\,\mathrm{d}x\Big)
&=
\phi(0)m_t'
+\dot\Lambda_t\phi(\Lambda_t)u(t,\Lambda_t)
+\frac12\int_0^{\Lambda_t}\phi(x)\partial_{xx}u(t,x)\,\mathrm{d}x.
\end{align*}
On the other hand,
\begin{align*}
m_t'
=
-\frac{\mathrm{d}}{\mathrm{d}t}\int_0^{\Lambda_t}u(t,x)\,\mathrm{d}x
=
-\dot\Lambda_tu(t,\Lambda_t)-\frac12\int_0^{\Lambda_t}\partial_{xx}u(t,x)\,\mathrm{d}x.
\end{align*}
Using
\begin{align*}
u(t,0)=0,\qquad \frac12\partial_xu(t,\Lambda_t)=-\dot\Lambda_tu(t,\Lambda_t),
\end{align*}
we obtain
\begin{align*}
m_t'=\frac12\partial_xu(t,0).
\end{align*}
Substituting this into the previous display and integrating by parts twice,
\begin{align*}
\frac{\mathrm{d}}{\mathrm{d}t}\langle \phi,\nu_t\rangle
&=
\frac12\int_0^{\Lambda_t}\phi''(x)u(t,x)\,\mathrm{d}x
-\frac12\phi'(\Lambda_t)u(t,\Lambda_t)\\
&=
\frac12\int_{(0,\Lambda_t)}\phi''(x)\,\nu_t(\mathrm{d}x)
-\phi'(\Lambda_t)\,\dot{\bar\ell}_t.
\end{align*}
Integrating in time, we conclude that
\begin{align}
\langle \phi,\nu_t\rangle
=
\langle \phi,\mu_0\rangle
+\frac12\int_0^t\int_{(0,\Lambda_s)}\phi''(x)\,\nu_s(\mathrm{d}x)\,\mathrm{d}s
-\int_0^t\phi'(\Lambda_s)\,\mathrm{d}\bar\ell_s.
\label{eq:weak_forward_nu}
\end{align}

\medskip\noindent
\textit{Step 4.} Equations \eqref{eq:weak_forward_mu} and \eqref{eq:weak_forward_nu} are the same linear weak formulation with the same initial condition. By uniqueness of the corresponding linear forward equation, we have
\begin{align*}
\mu_t=\nu_t,\qquad \ell_t=\bar\ell_t,\qquad \forall t\in[0,T].
\end{align*}
Therefore,
\begin{align*}
\PP[\tau\leq t]=m_t=1-\int_0^{\Lambda_t}u(t,x)\,\mathrm{d}x,
\qquad
\EE[L_t]=\ell_t=\frac12\int_0^t u(s,\Lambda_s)\,\mathrm{d}s.
\end{align*}

Finally, using the boundary condition $u(t,\Lambda_t)=\alpha\partial_xu(t,0)$,
together with $m_t'=\frac12\partial_xu(t,0)$, we get
\begin{align*}
\frac{\mathrm{d}}{\mathrm{d}t}\EE[L_t]
=
\frac12u(t,\Lambda_t)
=
\frac{\alpha}{2}\partial_xu(t,0)
=
\alpha m_t'
=
\alpha\frac{\mathrm{d}}{\mathrm{d}t}\PP[\tau\leq t].
\end{align*}
Since $\EE[L_0]=0=\alpha\PP[\tau\leq0]$, it follows that $\EE[L_t]=\alpha\PP[\tau\leq t]$ for all $0\le t\le T$.
Hence $(X,\Lambda)$ is a probabilistic solution.
\end{proof}

Throughout the following lemmas, let $(X,\Lambda)$ be a probabilistic
solution, let
$$
    \mu_t := \mathcal{L}(X_t),
    \qquad
    \tau := \inf\{t\geq 0:X_t\leq 0\},
$$
and suppose that $\Lambda\in D([0,\infty))$ and
$\mathbb{E}[X_0]>\alpha$. We use the convention that the process is
absorbed at $\tau$, so that
$X_t=X_0+B_{t\wedge\tau}-L_t$ and $L_t=L_{t\wedge\tau}$.

Set
$a_*:=\mathbb{E}[X_0]-\alpha>0$, we have the following result.

\begin{lemma}[Uniform positivity of the free boundary]
\label{lem:boundary-uniform-positivity}
$\Lambda$ is uniformly bounded away from zero:
$$
    \inf_t\Lambda_t\geq a_*.
$$
\end{lemma}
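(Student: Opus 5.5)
The plan is to compare two bounds on the mean wealth $\EE[X_t]$: a lower bound coming from the loss constraint, and an upper bound coming from the fact that the reflected process never exceeds the boundary.

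\emph{Upper bound from the reflection.} Directly from the definition $L_t=\sup_{s\in[0,t]}(X_0+B_s-\Lambda_s)_+$ we have $L_t\ge X_0+B_t-\Lambda_t$, hence $X_0+B_t-L_t\le\Lambda_t$ for every $t$, including $t=0$, where $L_0=(X_0-\Lambda_0)_+$. On $\{\tau>t\}$ this gives $X_t\le\Lambda_t$. On $\{\tau\le t\}$ we have $X_t=0$. In both cases $X_t\le\Lambda_t^+$, and therefore
\begin{align*}
\EE[X_t]\le\Lambda_t^+\,\PP[\tau>t]\le\Lambda_t^+ .
\end{align*}
No regularity of $\Lambda$ beyond $\Lambda\in D([0,\infty))$ is needed for this step.

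\emph{Lower bound from the constraint.} With the absorption convention $X_t=X_0+B_{t\wedge\tau}-L_t$, I take expectations term by term. Since $t\wedge\tau$ is a bounded stopping time, optional stopping gives $\EE[B_{t\wedge\tau}]=0$. The constraint of Definition~\ref{def:FBPProbSol} gives $\EE[L_t]=\alpha\PP[\tau\le t]\le\alpha$, which also shows $L_t$ is integrable. Hence
\begin{align*}
\EE[X_t]=\EE[X_0]-\alpha\PP[\tau\le t]\ge\EE[X_0]-\alpha=a_*>0.
\end{align*}
Here $\EE[X_0]$ is the mean of the initial law, so the jump $L_0$ at time $0$ is already accounted for inside $\EE[L_t]$.

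\emph{Conclusion and main obstacle.} Combining the two bounds gives $\Lambda_t^+\ge a_*>0$. In particular $\Lambda_t>0$, so $\Lambda_t=\Lambda_t^+\ge a_*$ for every $t$, and taking the infimum over $t$ yields the lemma. The only delicate point is the integrability needed to justify taking expectations: $L_t\ge0$ has finite mean by the constraint, $X_0$ has finite mean by assumption, and $B_{t\wedge\tau}$ is integrable with zero mean. Once these are in place, the remainder is the pointwise inequality from the first step.
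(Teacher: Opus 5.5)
Your proposal is correct and follows the paper's argument. The paper also combines $\EE[X_t]=\EE[X_0]-\alpha\PP[\tau\le t]\ge a_*$ (via optional stopping) with the pointwise bound $X_t\le\Lambda_t$ coming from the Skorokhod map. Your use of $\Lambda_t^+$ on the absorbed event is a small improvement over the paper's ``$0\le X_t\le\Lambda_t$'', which tacitly assumes $\Lambda_t\ge0$ before that has been proved.
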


\begin{proof}
Since $t\wedge\tau$ is bounded, optional stopping gives
$$
    \mathbb{E}[B_{t\wedge\tau}]=0.
$$
Using the defining relation
$$
    \mathbb{E}[L_t]=\alpha\mathbb{P}(\tau\leq t),
$$
we obtain
\begin{equation}
\label{eq:mean-state-lower-bound}
    \mathbb{E}[X_t]=\mathbb{E}[X_0]-\mathbb{E}[L_t]=
    \mathbb{E}[X_0]-\alpha\mathbb{P}[\tau\leq t]
    \geq a_*.
\end{equation}
On the other hand, the Skorokhod representation implies that
$$
    0\leq X_t\leq \Lambda_t
    \qquad\text{a.s.}
$$
Consequently,
\begin{equation}
\label{eq:boundary-uniform-lower-bound}
\Lambda_t\geq \mathbb{E}[X_t]\geq a_*.
\end{equation}
The same observation also gives
\begin{equation}
\label{eq:support-upper-inclusion}
\operatorname{supp}(\mu_t)\subseteq[0,\Lambda_t].
\end{equation}
\end{proof}

\begin{lemma}[Interior reachability]
\label{lem:interior-reachability}
Let $0\leq r<t$, and let $J$ be a nonempty open interval satisfying
$$
    J\subset\Big(0,\inf_{q\in[r,t]}\Lambda_q\Big).
$$
For every $x\in(0,\Lambda_r]$, the reflected process restarted from
$x$ at time $r$ satisfies
$$
    \mathbb{P}_{r,x}
    \bigl(
        X_t\in J,\ \tau>t
    \bigr)>0.
$$

Moreover, if
$$
    J\subset
    \Big(
        0,\inf_{q\in[r,t)}\Lambda_q
    \Big),
$$
then
$$
    \mathbb{P}_{r,x}
    \bigl(
        X_{t-}\in J,\ \tau\geq t
    \bigr)>0.
$$
\end{lemma}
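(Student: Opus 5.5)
Fix $0\le r<t$, $x\in(0,\Lambda_r]$ and $J=(a,b)$ as in the statement, and put $m:=\inf_{q\in[r,t]}\Lambda_q\ge a_*>0$ (Lemma~\ref{lem:boundary-uniform-positivity}). We restart at time $r$ from $x$, so $X_s=x+(B_s-B_r)-(L_s-L_r)$ for $s<\tau$. The reflection term satisfies $L_s-L_r=\sup_{q\in[r,s]}(x+B_q-B_r-\Lambda_q)^+$ (or $0$ if there is no prior running maximum), with the convention $X_{r}\le\Lambda_r$. The plan is to exhibit an event of positive Wiener measure, described purely through the Brownian path on $[r,t]$, on which $\tau>t$ and $X_t\in J$. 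Since $X$ is a deterministic functional of $(x,B)$ given $\Lambda$, it suffices to find a tube around a suitable continuous path. The support theorem (Stroock--Varadhan) for Brownian motion then says that any uniform tube around a continuous path started at $0$ has positive probability.

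\textbf{Step 1 (move into the interior).} Choose a target $y\in J$ and $\varepsilon>0$ with $[y-3\varepsilon,y+3\varepsilon]\subset J$. Consider a deterministic path $g$ on $[r,t]$ with $g(r)=0$ that decreases linearly from $0$ to $y-x$ over a short interval $[r,r+\eta]$ and is then constant. If $y\ge x$, then $x< y<m$, and we can instead take $g$ increasing to $y-x$; in either case $x+g$ stays in $[\min(x,y),\max(x,y)]$. The one problem is that $x$ might equal $\Lambda_r$, while $\Lambda$ is only càdlàg. To handle this, on the tube $\{\sup_{[r,t]}|B_s-B_r-g(s)|<\varepsilon\}$ we use that reflection only pushes down. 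Hence $X_s\le x+B_s-B_r$ and $X_s\le \Lambda_s$, and the lower bound comes from the Skorokhod formula: $X_s\ge x+(B_s-B_r)-\sup_{q\le s}(x+B_q-B_r-\Lambda_q)^+$.

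\textbf{Step 2 (control the reflection).} On the tube, for $q\ge r+\eta$ we have $x+B_q-B_r<y+\varepsilon<m\le\Lambda_q$. So no new reflection occurs after time $r+\eta$, and $L_s-L_r=L_{r+\eta}-L_r$ for $s\ge r+\eta$. During $[r,r+\eta]$, the total reflection is at most $\sup_{q\in[r,r+\eta]}(x+B_q-B_r-\Lambda_q)^+\le x+\varepsilon-m$ when $x>m$, and $\le\varepsilon$-small when $x<m$. The delicate case is $x$ near $\Lambda_r>m$. Then the reflected path lies between $\min(\Lambda,\,x+B-B_r)$ and $x+B-B_r$ minus a bounded amount, so it stays strictly positive on $[r,r+\eta]$ provided $\eta$ is small: $\Lambda\ge m>0$ and $x+B-B_r\ge \min(x,y)-\varepsilon>0$. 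The key quantitative point is $X_s\ge \min\{x+B_s-B_r,\;\inf_{q\le s}\Lambda_q+(B_s-B_q)\}$, which follows from the explicit Skorokhod map. On the tube this is bounded below by roughly $\min(x,y,m)-2\varepsilon>0$, so $\tau>t$. After $r+\eta$ the process equals $X_{r+\eta}+B_s-B_{r+\eta}$. We then adjust the second leg of $g$ so that it lands at $y$, taking $X_{r+\eta}$ into account: we steer $g$ after $r+\eta$ so that $x+g(s)$ stays in $(\varepsilon',m-\varepsilon')$. Because the reflection amount on $[r,r+\eta]$ is itself determined up to $O(\varepsilon)$ by $g$, the endpoint lies within $3\varepsilon$ of $y$.

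\textbf{Main obstacle and the second claim.} The hardest step is Step 2 when $x=\Lambda_r$ and $\Lambda$ jumps, since the amount reflected is then not continuous in the path. I would handle it with the two-sided sandwich bound above rather than continuity of the Skorokhod map. For the left-limit statement we repeat the argument with $t$ replaced by $t-$: the condition $J\subset(0,\inf_{[r,t)}\Lambda_q)$ guarantees no reflection on $[r+\eta,t)$. On the tube, $X_{t-}=\lim_{s\uparrow t}X_s\in[y-3\varepsilon,y+3\varepsilon]\subset J$ and $\tau\ge t$, so the event has positive probability.
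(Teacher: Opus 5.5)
There is a genuine gap in Step 2, in the descending case $y<x$.

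\textbf{The positivity bound fails.} You claim that on the tube $\min\{x+B_s-B_r,\ \inf_{r\le q\le s}(\Lambda_q+B_s-B_q)\}$ is roughly at least $\min(x,y,m)-2\varepsilon$. This does not follow from $\Lambda\ge m$. Along your descending path, $B_s-B_q$ can be as negative as $y-x-2\varepsilon$, so you only get $m+y-x-2\varepsilon$, which is negative once $x-y>m$. Making $\eta$ small does not help by itself, because $g$ still descends by the full amount $x-y$.

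Here is a concrete failure. Take a barrier with $x=\Lambda_r=10$, $\Lambda$ decreasing linearly to $1$ on $[r,r+\delta]$ and equal to $1$ afterwards, $J\subset(0,1)$, $y=1/2$, and $\eta=2\delta$. The path $x+g$ is pushed onto the barrier and reaches level $1$ at time $r+\delta$ after a reflection of size $4.25$. It then descends another $4.75$, so it is absorbed before $r+\eta$. The same holds for every Brownian path in a thin tube around $g$. Here $\Lambda$ is continuous, so the difficulty is not jumps but a barrier falling far below $x$ inside $[r,r+\eta]$.

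\textbf{Steering does not rescue it.} Your remedy of ``steering'' $g$ after $r+\eta$ comes too late, since absorption has already happened. It also clashes with your own no-reflection argument on $[r+\eta,t]$, which used $x+g\equiv y<m$ there. The premise behind avoiding continuity is also wrong: for a fixed c\`adl\`ag barrier $\lambda$, the map $\psi\mapsto\sup_{v\le u}(\psi(v)-\lambda(v))^+$ is $1$-Lipschitz in the sup norm. So the reflected amount \emph{is} continuous in the path, whether or not $\lambda$ jumps.

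\textbf{The missing ingredient.} Choose $\eta$ using the right-continuity of $\Lambda$ at $r$, so that $\Lambda_q>\Lambda_r-\varepsilon\ge x-\varepsilon$ for all $q\in[r,r+\eta]$.
\begin{itemize}
\item Since $g\le 0$, on the tube $x+B_q-B_r<x+\varepsilon$, so the total reflection on $[r,r+\eta]$ is at most $2\varepsilon$.
\item For $q\ge r+\eta$ one has $x+B_q-B_r<y+\varepsilon<m\le\Lambda_q$, so no further reflection occurs.
\item Hence $X_s>x+g(s)-3\varepsilon\ge y-3\varepsilon>0$ on $[r,t]$, and $X_t\in(y-3\varepsilon,y+\varepsilon)\subset J$.
\end{itemize}
No steering is needed, your ascending case was already fine, and the left-limit statement follows the same way with $\inf_{[r,t)}\Lambda$.

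\textbf{Comparison with the paper.} With this repair your route is a legitimate alternative. The paper instead builds a positive continuous path from $x$ to $z$ lying below the entire barrier $\lambda$ on $[0,h]$, via the auxiliary function $q$, so that $\Gamma_\lambda(g)=g$. It then concludes from the $2$-Lipschitz bound for $\Gamma_\lambda$. Your repaired version avoids that construction by descending fast enough that the barrier only exerts an $O(\varepsilon)$ push.
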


\begin{proof}
Set
$$
    h:=t-r,
    \qquad
    \lambda(u):=\Lambda_{r+u},
    \qquad
    b:=\inf_{0\leq u\leq h}\lambda(u)>0.
$$
Fix $z\in J$. For a path $\psi\in D([0,h])$, define the one-sided
Skorokhod map
$$
    \Gamma_\lambda(\psi)(u)
    :=
    \psi(u)
    -
    \sup_{0\leq v\leq u}
    \bigl(\psi(v)-\lambda(v)\bigr)^+.
$$
For fixed $\lambda$, this map satisfies
\begin{equation}
\label{eq:skorokhod-map-lipschitz}
    \left\|
        \Gamma_\lambda(\psi)
        -
        \Gamma_\lambda(\widetilde\psi)
    \right\|_\infty
    \leq
    2\|\psi-\widetilde\psi\|_\infty.
\end{equation}

We construct a continuous path $g:[0,h]\to(0,\infty)$ such that
$$
    g(0)=x,
    \qquad
    g(h)=z,
    \qquad
    g(u)\leq\lambda(u)
    \quad\text{for every }u\in[0,h].
$$
Define
$$
    d(u):=\bigl(x-\lambda(u)\bigr)^+,
    \qquad
    M:=\sup_{0\leq u\leq h}d(u).
$$
If $M=0$, set $q\equiv0$. Otherwise, set $q(0):=0$ and
$$
    q(u)
    :=
    \sup_{0<v\leq h}
    d(v)\min\left\{1,\frac{u}{v}\right\},
    \qquad u>0.
$$
Since $d(0)=0$ and $d$ is right-continuous at zero, $q$ is continuous
on $[0,h]$. Moreover,
$$
    d(u)\leq q(u)\leq M
$$
and therefore
$$
    x-q(u)\leq\lambda(u),
    \qquad
    x-q(u)\geq x-M=\min\{x,b\}>0.
$$
Now define
$$
    g(u)
    :=
    \left(1-\frac{u}{h}\right)\bigl(x-q(u)\bigr)
    +
    \frac{u}{h}z.
$$
Both $x-q(u)$ and $z$ are bounded above by $\lambda(u)$, and both
are strictly positive. Hence $g$ has the required properties.
In particular,
$$
    \Gamma_\lambda(g)=g.
$$

Choose $\eta>0$ sufficiently small that
$$
    2\eta
    <
    \min\left\{
        \inf_{0\leq u\leq h}g(u),
        \operatorname{dist}(z,J^c)
    \right\}.
$$
Let
$$
    W_u:=B_{r+u}-B_r.
$$
On the event
$$
    \sup_{0\leq u\leq h}
    \left|
        W_u-\bigl(g(u)-x\bigr)
    \right|
    <\eta,
$$
the Lipschitz estimate \eqref{eq:skorokhod-map-lipschitz} implies that
the reflected trajectory remains strictly positive and that its
terminal value lies in $J$. Since Wiener measure has full support on
$C_0([0,h])$, this event has strictly positive probability. This
proves the first assertion.

For the left-limit assertion, extend the boundary to the terminal
time by setting
$$
    \lambda(h):=\Lambda_{t-}.
$$
The same argument then applies, and the terminal value of the
corresponding reflected trajectory is $X_{t-}$.
\end{proof}

\begin{lemma}[Absence of upward jumps]
\label{lem:no-upward-jumps}
For every $t>0$,
$$
    \Lambda_t\leq\Lambda_{t-}.
$$
\end{lemma}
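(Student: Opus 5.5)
We argue by contradiction, combining the mean constraint $\EE[L_t]=\alpha\PP[\tau\le t]$ with the interior reachability of Lemma~\ref{lem:interior-reachability}. Suppose that $\Lambda_t>\Lambda_{t-}$ for some $t>0$. Since $L_s=\sup_{r\le s}(X_0+B_r-\Lambda_r)^+$ and $B$ is continuous, any jump of $L$ at time $t$ is $\Delta L_t=(X_0+B_t-\Lambda_t-L_{t-})^+=(\widetilde X_{t-}-\Lambda_t)^+$. Because $\widetilde X_{t-}\le\Lambda_{t-}<\Lambda_t$, this jump is zero. An upward jump of the boundary therefore produces no loss at time $t$, and $L$ is continuous at $t$.

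\textbf{Step 1.} Right-continuity of $\Lambda$ gives $\delta>0$ and $\varepsilon>0$ such that $\Lambda_s\ge\Lambda_{t-}+\varepsilon$ for all $s\in[t,t+\delta]$. On $[t,t+\delta]$ every surviving path satisfies $X_t\le\Lambda_{t-}$. Hence, until it has travelled a distance $\varepsilon$ upward, a path does not touch the boundary, and $L$ does not increase on the event $A:=\{\sup_{s\in[t,t+h]}(B_s-B_t)<\varepsilon\}$ for $h\le\delta$. Consequently
\begin{align*}
\EE[L_{t+h}-L_t]\le \EE\bigl[(L_{t+h}-L_t)\bone_{A^c}\bone_{\{\tau>t\}}\bigr].
\end{align*}
The right-hand side is superexponentially small in $h$: it is at most of order $e^{-\varepsilon^2/(4h)}$, since on $A^c$ the increment of $L$ is bounded by $\sup_{s\in[t,t+h]}|B_s-B_t|$, which has all moments.

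\textbf{Step 2.} The constraint forces $\EE[L_{t+h}-L_t]=\alpha\bigl(\PP[\tau\le t+h]-\PP[\tau\le t]\bigr)$. The right-hand side is at least the probability that an uncontrolled Brownian motion started from $X_t$ hits $0$ before time $h$; the control only pushes paths downward, so this is a valid lower bound. We then show this probability is not superexponentially small.

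\emph{Case (i): $\mu_t$ charges $(0,\eta]$ for every $\eta>0$.} The hitting probability is then at least $\mu_t((0,\sqrt h])\cdot c$ for a constant $c>0$. We need this to dominate $e^{-\varepsilon^2/(4h)}$ for small $h$, which fails only if $\mu_t((0,\sqrt h])$ decays superexponentially.

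\emph{Case (ii).} To avoid depending on the mass of $\mu_t$ near $0$, we use Lemma~\ref{lem:interior-reachability}: pick $r<t$ and an interval $J$ near $0$, which yields a positive-probability set of paths lying in $J$ at time $t$. Better still, we restart at $t$ and wait a fixed time. On $[t,t+\delta]$, paths in a set of positive $\mu_t$-mass reach $(0,\eta)$ by time $t+\delta/2$ while staying below $\Lambda_{t-}+\varepsilon/2$, and hence incur no loss. This gives $\PP[\tau\le t+\delta]-\PP[\tau\le t]\ge p>0$ together with $\EE[L_{t+\delta}-L_t]\le$ the loss coming only from paths exceeding $\Lambda_{t-}+\varepsilon$.

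\textbf{Step 3.} This quantitative comparison is the main obstacle: the expected loss generated by paths crossing the raised boundary must be shown to fall strictly short of $\alpha$ times the default mass, for a suitably chosen window. The cleanest route is to take $h\downarrow0$. We show that $\liminf_{h\downarrow0}$ of the ratio of the default increment to the loss increment is infinite. We first try the small-time Gaussian tail comparison above, using the fact that reachability places positive mass of $\mu_t$ in every interval $(0,\eta)$ with $\eta<\Lambda_{t-}$. If needed, we then apply the strong Markov property at $t$ to propagate this into a polynomial lower bound $\gtrsim h^{k}$ for the default increment.

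This gives the contradiction $\alpha\bigl(\PP[\tau\le t+h]-\PP[\tau\le t]\bigr)>\EE[L_{t+h}-L_t]$ for small $h$, so $\Lambda_t\le\Lambda_{t-}$.
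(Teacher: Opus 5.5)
Your Step 1 and the reduction at the start of Step 2 are correct and match the paper. There is no jump of $L$ at $t$, and $X_t\le\Lambda_{t-}$ on $\{\tau>t\}$. The loss increment satisfies $\EE[L_{t+h}-L_t]\le C\sqrt h\,e^{-\varepsilon^2/(4h)}$. Because the control only pushes paths down, $\PP(t<\tau\le t+h)\ge\int_{(0,\infty)}2\overline{\Phi}(x/\sqrt h)\,\mu_t(\d x)$.

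The gap is that you never carry out the decisive comparison. You call it ``the main obstacle'', and your three attempts at it all miss:
\begin{itemize}
\item In Case (i) you restrict to the shrinking window $(0,\sqrt h]$. That makes the argument depend on the decay rate of $\mu_t((0,\sqrt h])$, which you cannot control.
\item In Case (ii) the fixed window of length $\delta$ produces two order-one quantities (a default increment $\ge p$ and a loss from paths crossing $\Lambda_{t-}+\varepsilon$). Nothing orders them, so no contradiction follows.
\item The fallback of applying the strong Markov property \emph{at} $t$ cannot give a polynomial rate. Everything on $[t,t+h]$ is governed by $\mu_t$, whose behaviour near $0$ is exactly what is unknown.
\end{itemize}

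The missing step is actually easy and needs only qualitative information:
\begin{itemize}
\item Fix $\eta\in(0,\min\{a_*,\varepsilon/\sqrt2\})$ and any $r\in[0,t)$.
\item Since $\EE[X_r]\ge a_*$, we have $\PP(X_r>0)>0$. Since $\Lambda\ge a_*>\eta$ by Lemma~\ref{lem:boundary-uniform-positivity}, Lemma~\ref{lem:interior-reachability} on $[r,t]$ with $J=(0,\eta)$ gives $\mu_t((0,\eta))>0$.
\item The hitting probability is monotone in the starting point, and the Mills-ratio bound applies. Hence, for small $h$,
\[
\PP(t<\tau\le t+h)\ge 2\overline{\Phi}(\eta/\sqrt h)\,\mu_t((0,\eta))\ge c\sqrt h\,e^{-\eta^2/(2h)},\qquad c>0 .
\]
\item Because $\eta^2/2<\varepsilon^2/4$, this eventually exceeds $C\sqrt h\,e^{-\varepsilon^2/(4h)}/\alpha$ as $h\downarrow0$. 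That contradicts $\EE[L_{t+h}-L_t]=\alpha\PP(t<\tau\le t+h)$.
\end{itemize}

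With this inserted your proof is complete, and it is lighter than the paper's. The paper instead proves a quantitative boundary-layer bound:
\begin{itemize}
\item It restarts at the earlier time $s=2t/3$.
\item It minorizes $\mu_t$ by the Dirichlet heat kernel on $(0,R)$ with $R<a_*$; paths there feel neither reflection nor absorption.
\item It uses the Hopf lemma to get $\mu_t(\d y)\ge c_1 y\,\d y$ near $0$, hence $\PP(t<\tau\le t+h)\ge c_2 h$.
\end{itemize}
That polynomial rate is more than needed against a loss bound of order $e^{-c/h}$. Your fixed-$\eta$ Gaussian comparison suffices once the exponent $\eta^2/2$ is chosen below the one coming from the boundary gap.
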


\begin{proof}
Fix $t_0>0$. We first establish a lower bound on the mass of $\mu_{t_0}$
near the origin.

\smallskip
\noindent
\emph{Boundary-layer estimate.}
Choose $R\in(0,a_*)$ and set
$$
    r:=\frac{t_0}{3},
    \qquad
    s:=\frac{2t_0}{3}.
$$
By \eqref{eq:mean-state-lower-bound},
$$
    \mathbb{P}(X_r>0)>0.
$$
Since $R<a_*\leq\Lambda_q$ for every $q\geq0$, we may apply
Lemma~\ref{lem:interior-reachability} on $[r,s]$ with
$$
    J_0:=\left(\frac{R}{3},\frac{2R}{3}\right).
$$
It follows that
$$
    \mu_s(J_0)>0.
$$
By inner regularity, there exists a compact interval
$I\Subset J_0$ such that
\begin{equation}
\label{eq:positive-interior-mass}
    \mu_s(I)>0.
\end{equation}

Let $p_R(h,x,y)$ denote the transition density of Brownian motion
killed upon exiting $(0,R)$, where
$$
    h:=t_0-s.
$$
If the process starts from $x\in I$ at time $s$ and remains in
$(0,R)$ until time $t_0$, it encounters neither the upper reflecting
boundary nor the absorbing boundary. Therefore, for every Borel set
$A\subseteq(0,\infty)$,
\begin{equation}
\label{eq:killed-kernel-minorization}
    \mu_{t_0}(A)
    \geq
    \int_I
    \int_{A\cap(0,R)}
        p_R(h,x,y)\,dy\,\mu_s(dx).
\end{equation}
For fixed $h>0$, the Dirichlet heat kernel is smooth and strictly
positive on $(0,R)^2$, vanishes at $y=0$, and satisfies
$$
    \partial_y p_R(h,x,0)>0,
    \qquad x\in(0,R).
$$
Since $I\Subset(0,R)$, compactness and the Hopf boundary lemma yield
constants $c_0>0$ and $\varepsilon>0$ such that
$$
    p_R(h,x,y)\geq c_0y,
    \qquad
    x\in I,\quad 0<y<\varepsilon.
$$
Combining this estimate with
\eqref{eq:positive-interior-mass}--\eqref{eq:killed-kernel-minorization},
we obtain a constant $c_1>0$ such that
\begin{equation}
\label{eq:boundary-layer-minorization}
    \mu_{t_0}(A)
    \geq
    c_1
    \int_{A\cap(0,\varepsilon)}y\,dy
\end{equation}
for every Borel set $A\subseteq(0,\infty)$.

\smallskip
\noindent
\emph{Exclusion of an upward jump.}
Suppose, toward a contradiction, that
$$
    \delta:=\Lambda_{t_0}-\Lambda_{t_0-}>0.
$$
By right-continuity of $\Lambda$, there exists $h_0>0$ such that
\begin{equation}
\label{eq:boundary-gap-after-upward-jump}
    \Lambda_{t_0+u}
    \geq
    \Lambda_{t_0-}+\frac{\delta}{2},
    \qquad
    0\leq u\leq h_0.
\end{equation}
An upward jump of the boundary does not increase the regulator, so
$$
    L_{t_0}=L_{t_0-}.
$$
Moreover,
$$
    X_{t_0}\leq\Lambda_{t_0-}
    \qquad\text{on }\{\tau>t_0\}.
$$
Let
$$
    W_u:=B_{t_0+u}-B_{t_0}.
$$
For $0<h\leq h_0$, the Skorokhod representation and
\eqref{eq:boundary-gap-after-upward-jump} give
$$
\begin{aligned}
    L_{t_0+h}-L_{t_0}
    &\leq
    \mathbf{1}_{\{\tau>t_0\}}
    \sup_{0\leq u\leq h}
    \bigl(
        X_{t_0}+W_u-\Lambda_{t_0+u}
    \bigr)^+ \\
    &\leq
    \left(
        \sup_{0\leq u\leq h}W_u-\frac{\delta}{2}
    \right)^+.
\end{aligned}
$$
The reflection principle therefore yields constants $C,c>0$ such
that
\begin{equation}
\label{eq:loss-upper-bound-after-upward-jump}
    \mathbb{E}
    \bigl[
        L_{t_0+h}-L_{t_0}
    \bigr]
    \leq
    C\sqrt{h}\exp\left(-\frac{c\delta^2}{h}\right)
\end{equation}
for all sufficiently small $h>0$.

Let $\overline{\Phi}$ denote the standard Gaussian tail. Conditional
on $X_{t_0}=x>0$, the event
$$
    \inf_{0\leq u\leq h}(x+W_u)\leq0
$$
forces default before time $t_0+h$, since the control can only
decrease the state. Hence
$$
\begin{aligned}
    \mathbb{P}(t_0<\tau\leq t_0+h)
    &\geq
    \int_{(0,\varepsilon)}
        2\overline{\Phi}
        \left(\frac{x}{\sqrt{h}}\right)
        \mu_{t_0}(dx) \\
    &\geq
    2c_1
    \int_0^\varepsilon
        x\overline{\Phi}
        \left(\frac{x}{\sqrt{h}}\right)\,dx \\
    &=
    2c_1h
    \int_0^{\varepsilon/\sqrt{h}}
        z\overline{\Phi}(z)\,dz.
\end{aligned}
$$
Since
$\int_0^\infty z\overline{\Phi}(z)\,\d z>0$,
there exist $c_2>0$ and $h_1>0$ such that
\begin{equation}
\label{eq:default-lower-bound-after-upward-jump}
    \mathbb{P}(t_0<\tau\leq t_0+h)
    \geq c_2h,
    \qquad 0<h\leq h_1.
\end{equation}

Finally, the probabilistic-solution identity implies
\begin{equation}
\label{eq:loss-default-increment-identity}
    \mathbb{E}
    \bigl[
        L_{t_0+h}-L_{t_0}
    \bigr]
    =
    \alpha\mathbb{P}(t_0<\tau\leq t_0+h).
\end{equation}
Combining
\eqref{eq:loss-upper-bound-after-upward-jump},
\eqref{eq:default-lower-bound-after-upward-jump}, and
\eqref{eq:loss-default-increment-identity} gives
$
    \alpha c_2h
    \leq
    C\sqrt{h}\exp\left(-\frac{c\delta^2}{h}\right)
$
for all sufficiently small $h>0$, which is impossible. Thus
$\delta\leq0$, and the result follows.
\end{proof}

\begin{lemma}[Absence of downward jumps]
\label{lem:no-downward-jumps}
For every $t>0$,
$$
    \Lambda_t\geq\Lambda_{t-}.
$$
\end{lemma}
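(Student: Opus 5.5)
The plan is to argue by contradiction. Suppose that at some $t_0>0$ we have $\delta:=\Lambda_{t_0-}-\Lambda_{t_0}>0$. Such a jump forces an instantaneous jump of the regulator on the event that $X_{t_0-}$ lies strictly above $\Lambda_{t_0}$. We will show that this event has positive probability, while the probabilistic-solution identity forces the expected jump of $L$ at $t_0$ to equal $\alpha\PP(\tau=t_0)$, which we show is zero.

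\emph{Step 1: $\PP(\tau=t_0)=0$.} If $\tau=t_0$, then either $X_{t_0-}=0$, or the jump of $L$ at $t_0$ sends the state to $0$. The second case is impossible: the Skorokhod map sends the post-jump state to $\Lambda_{t_0}\ge a_*>0$ by Lemma~\ref{lem:boundary-uniform-positivity}. So $X$ must creep continuously to $0$ at time $t_0$. The regulator $L$ only increases when the state sits at $\Lambda_s\ge a_*$. Hence on $\{\tau=t_0\}$ there is a rational $q<t_0$ such that $X_s<a_*/2$ on $[q,t_0)$. On that interval $L$ is constant, so $X_s=X_q+B_s-B_q$. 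Conditionally on $\mathcal F_q$, the first hitting time of $0$ by a Brownian motion started at $X_q>0$ has a density, so the probability that it equals $t_0$ is zero. Taking the union over rational $q$ gives $\PP(\tau=t_0)=0$.

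\emph{Step 2: no jump of $L$ on average.} From $\EE[L_t]=\alpha\PP(\tau\le t)$ at $t_0$ and at $t_0-$ (using monotone limits), we get
\[
\EE[L_{t_0}-L_{t_0-}]=\alpha\PP(\tau=t_0)=0 .
\]
Since $L$ is non-decreasing, this gives $L_{t_0}=L_{t_0-}$ a.s. By the Skorokhod representation, the jump at $t_0$ on $\{\tau\ge t_0\}$ equals $(X_{t_0-}-\Lambda_{t_0})^+$. Therefore $X_{t_0-}\le\Lambda_{t_0}$ a.s. on $\{\tau\ge t_0\}$.

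\emph{Step 3: reaching the gap.} By left limits, choose $r<t_0$ close to $t_0$ with $\Lambda_q>\Lambda_{t_0}+\delta/2$ for all $q\in[r,t_0)$. Put $J:=(\Lambda_{t_0}+\delta/8,\Lambda_{t_0}+\delta/4)$; then $J\subset(0,\inf_{q\in[r,t_0)}\Lambda_q)$. By \eqref{eq:mean-state-lower-bound}, $\PP(X_r>0)>0$, and $X_r\le\Lambda_r$. The left-limit part of Lemma~\ref{lem:interior-reachability}, integrated against the law of $X_r$ on $(0,\Lambda_r]$ via the Markov property of the reflected process, gives $\PP(X_{t_0-}\in J,\ \tau\ge t_0)>0$. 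On this event $X_{t_0-}>\Lambda_{t_0}$, which contradicts Step 2. Hence $\delta\le0$.

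The main obstacle is Step 1, the absence of an atom of $\tau$ at a fixed time. Without it, the downward jump could be ``paid for'' by a simultaneous atom of defaults. The localization argument above, which uses the fact that $L$ is flat while the state is below $a_*$, is what I would try first. Care is needed there because $\Lambda$ may itself have further jumps; this is harmless, since only the behaviour of $L$ near level $0$ matters.
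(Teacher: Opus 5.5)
Your proposal is correct and follows essentially the same route as the paper. You first rule out an atom of $\tau$ at $t_0$ by localizing below level $a_*/2$, where $L$ is flat. This gives $\EE[L_{t_0}-L_{t_0-}]=\alpha\PP(\tau=t_0)=0$, which you then contradict with positive mass of $X_{t_0-}$ in the gap $(\Lambda_{t_0},\Lambda_{t_0-})$, obtained from the left-limit part of Lemma~\ref{lem:interior-reachability}. The only cosmetic difference is that you turn the zero expected jump into the almost-sure bound $X_{t_0-}\le\Lambda_{t_0}$ on $\{\tau\ge t_0\}$, whereas the paper directly lower-bounds $\EE[\Delta L_{t_0}]$ by $\tfrac{\delta}{3}\PP(X_{t_0-}\in J,\ \tau\ge t_0)>0$.
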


\begin{proof}
We first show that the default time has no atoms at deterministic
positive times.

The explicit Skorokhod representation shows that, before absorption,
the continuous part of $dL$ is carried by the contact set
$\{X=\Lambda\}$. Moreover, at every jump time $s$,
\begin{equation}
\label{eq:regulator-jump-formula}
    \Delta L_s
    =
    \bigl(X_{s-}-\Lambda_s\bigr)^+,
    \qquad
    X_s
    =
    X_{s-}-\Delta L_s
    =
    X_{s-}\wedge\Lambda_s.
\end{equation}
By Lemma~\ref{lem:boundary-uniform-positivity}, whenever $L$
increases, the reflected state is at least $a_*$. In particular, the
process cannot default through a jump of the regulator. Thus, on
$\{\tau=t\}$,
$$
    \lim_{s\uparrow t}X_s=0.
$$

Fix $t>0$. For every sufficiently large integer $n$, set
$
    r_n:=t-\frac{1}{n}
$
and define
$$
    A_n
    :=
    \left\{
        \tau=t,\ 
        \sup_{s\in[r_n,t)}X_s<\frac{a_*}{2}
    \right\}.
$$
Since $X_s\to0$ as $s\uparrow t$ on $\{\tau=t\}$,
$$
    \{\tau=t\}
    =
    \bigcup_{n\ \mathrm{sufficiently\ large}}A_n.
$$
On $A_n$, the regulator cannot increase on $[r_n,t]$, because every
point of increase of $L$ has reflected state at least $a_*$. Hence,
on $A_n$, the process evolves as
$$
    X_s
    =
    X_{r_n}+B_s-B_{r_n},
    \qquad r_n\leq s\leq t,
$$
up to its first hitting time of zero. It follows that
$$
    A_n
    \subseteq
    \left\{
        \tau>r_n,\
        \inf\left\{
            u\geq0:
            X_{r_n}+B_{r_n+u}-B_{r_n}\leq0
        \right\}
        =
        t-r_n
    \right\}.
$$
Conditional on $\mathcal{F}_{r_n}$, the Brownian first-passage time
from the strictly positive point $X_{r_n}$ has a continuous
distribution. Therefore,
$$
    \mathbb{P}(A_n)=0.
$$
Taking the countable union gives
\begin{equation}
\label{eq:default-time-no-atoms}
    \mathbb{P}(\tau=t)=0,
    \qquad t>0.
\end{equation}

Since $L$ is nondecreasing, monotone convergence and the defining
balance relation give
$$
\begin{aligned}
    \mathbb{E}[L_{t-}]
    &=
    \lim_{s\uparrow t}\mathbb{E}[L_s]=
    \alpha\lim_{s\uparrow t}\mathbb{P}(\tau\leq s)
    =
    \alpha\mathbb{P}(\tau<t).
\end{aligned}
$$
Consequently, by \eqref{eq:default-time-no-atoms},
\begin{equation}
\label{eq:expected-regulator-jump-zero}
    \mathbb{E}[\Delta L_t]
    =
    \alpha\mathbb{P}(\tau=t)
    =
    0.
\end{equation}

Suppose now, toward a contradiction, that
$$
    \delta:=\Lambda_{t-}-\Lambda_t>0.
$$
Choose
$$
    J
    :=
    \left(
        \Lambda_t+\frac{\delta}{3},
        \Lambda_t+\frac{2\delta}{3}
    \right)
    \Subset
    (\Lambda_t,\Lambda_{t-}).
$$
Since $\Lambda$ has a left limit at $t$, there exists $r<t$,
sufficiently close to $t$, such that
$$
    \inf_{q\in[r,t)}\Lambda_q>\sup J.
$$
Moreover, \eqref{eq:mean-state-lower-bound} implies
$$
    \mathbb{P}(X_r>0)>0.
$$
Applying the left-limit assertion of
Lemma~\ref{lem:interior-reachability} conditionally on $X_r$ yields
\begin{equation}
\label{eq:positive-mass-above-post-jump-boundary}
    \mathbb{P}
    \bigl(
        X_{t-}\in J,\ \tau\geq t
    \bigr)>0.
\end{equation}
On the event in
\eqref{eq:positive-mass-above-post-jump-boundary}, the jump formula
\eqref{eq:regulator-jump-formula} gives
$$
    \Delta L_t
    =
    X_{t-}-\Lambda_t
    \geq\frac{\delta}{3}.
$$
It follows that
$$
    \mathbb{E}[\Delta L_t]
    \geq
    \frac{\delta}{3}
    \mathbb{P}
    \bigl(
        X_{t-}\in J,\ \tau\geq t
    \bigr)
    >0,
$$
contradicting \eqref{eq:expected-regulator-jump-zero}. Therefore
$\delta\leq0$, proving the result.
\end{proof}

\begin{lemma}[Identification of the support]
\label{lem:support-identification}
For every $t>0$,
$$
    \operatorname{supp}(\mu_t)=[0,\Lambda_t].
$$
\end{lemma}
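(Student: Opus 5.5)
We must prove $\operatorname{supp}(\mu_t)=[0,\Lambda_t]$ for every $t>0$. The inclusion $\operatorname{supp}(\mu_t)\subseteq[0,\Lambda_t]$ is already \eqref{eq:support-upper-inclusion} from Lemma~\ref{lem:boundary-uniform-positivity}, since $0\le X_t\le\Lambda_t$ almost surely. By Lemmas~\ref{lem:no-upward-jumps} and \ref{lem:no-downward-jumps}, $\Lambda$ is continuous on $(0,\infty)$, and it is bounded below by $a_*>0$. It therefore remains to show the reverse inclusion: every point of $[0,\Lambda_t]$ lies in the support. Since supports are closed, it suffices to show that $\mu_t(J)>0$ for every nonempty open interval $J\subset(0,\Lambda_t)$. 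The point $0$ and the endpoint $\Lambda_t$ are then obtained by closure.

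\emph{Points near $0$.} The boundary-layer estimate \eqref{eq:boundary-layer-minorization} in the proof of Lemma~\ref{lem:no-upward-jumps} already gives $\mu_t(A)\ge c_1\int_{A\cap(0,\varepsilon)}y\,\d y$. Hence every interval $(0,\eta)$ has positive mass, and $0\in\operatorname{supp}(\mu_t)$. This step does not even need the atom of $\mu_t$ at $0$.

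\emph{Interior points.} Fix $J\Subset(0,\Lambda_t)$ with $\sup J<\Lambda_t$. By continuity of $\Lambda$ at $t$ (this is where both jump lemmas are used), there is $r<t$ with $\inf_{q\in[r,t]}\Lambda_q>\sup J$. By \eqref{eq:mean-state-lower-bound}, $\PP(X_r>0)>0$, since $\EE[X_r]\ge a_*>0$. Lemma~\ref{lem:interior-reachability}, applied conditionally on $X_r=x\in(0,\Lambda_r]$ and using the Markov property of the reflected, absorbed dynamics given the deterministic boundary $\Lambda$, gives $\PP_{r,x}(X_t\in J,\tau>t)>0$ for each such $x$. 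Integrating against $\mu_r$ restricted to $(0,\Lambda_r]$ yields $\mu_t(J)>0$. Measurability of $x\mapsto\PP_{r,x}(\cdot)$ follows from the Lipschitz continuity \eqref{eq:skorokhod-map-lipschitz} of the Skorokhod map in the path. Since $J$ is arbitrary, $(0,\Lambda_t)\subset\operatorname{supp}(\mu_t)$, and taking the closure gives $[0,\Lambda_t]\subseteq\operatorname{supp}(\mu_t)$.

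\emph{Main obstacle.} The delicate point is the top edge. Reaching intervals arbitrarily close to $\Lambda_t$ requires a small window $[r,t]$ on which $\Lambda$ stays above $\sup J$, and this holds only because $\Lambda$ is continuous at $t$. If $\Lambda$ jumped down at $t$, mass could still sit near $\Lambda_{t-}$; if it jumped up, no mass would yet reach $\Lambda_t$. Lemmas~\ref{lem:no-upward-jumps} and \ref{lem:no-downward-jumps} rule out both cases, so this obstacle has already been handled. The only remaining care is that the conditioning argument uses the strong Markov structure of the Skorokhod map on the time-shifted boundary $u\mapsto\Lambda_{r+u}$. This is exactly how Lemma~\ref{lem:interior-reachability} was formulated.
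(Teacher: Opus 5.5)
Your proof is correct and is essentially the paper's argument: the upper inclusion from $0\le X_t\le\Lambda_t$, continuity of $\Lambda$ from the two jump lemmas, a window $[r,t]$ on which $\Lambda$ stays above $\sup J$, and Lemma~\ref{lem:interior-reachability} integrated against $\mu_r$, with $0$ and $\Lambda_t$ obtained because the support is closed. Two small remarks: your separate boundary-layer step for the origin is redundant, since the closure of $(0,\Lambda_t)$ already contains $0$; and your comment that a downward jump would leave mass near $\Lambda_{t-}$ is inaccurate, because the jump formula gives $X_t=X_{t-}\wedge\Lambda_t\le\Lambda_t$, so for this lemma only the absence of upward jumps (i.e.\ $\Lambda_{t-}\ge\Lambda_t$) is actually needed.
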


\begin{proof}
Lemmas~\ref{lem:no-upward-jumps} and
\ref{lem:no-downward-jumps} imply that
$$
    \Lambda_t=\Lambda_{t-},
    \qquad t>0.
$$
Since $\Lambda$ is right-continuous, it follows that $\Lambda$ is
continuous on $(0,\infty)$.

The inclusion
$$
    \operatorname{supp}(\mu_t)\subseteq[0,\Lambda_t]
$$
was established in \eqref{eq:support-upper-inclusion}. It remains to
prove the reverse inclusion.

Fix $t>0$ and $y\in(0,\Lambda_t)$. Let $G$ be an arbitrary open
neighborhood of $y$. Choose an open interval $J$ such that
$$
    y\in J,
    \qquad
    \overline{J}
    \subset
    G\cap(0,\Lambda_t).
$$
By continuity of $\Lambda$, there exists $s<t$, sufficiently close
to $t$, such that
$$
    \inf_{q\in[s,t]}\Lambda_q>\sup J.
$$
By \eqref{eq:mean-state-lower-bound},
$$
    \mathbb{P}(X_s>0)>0.
$$
Applying Lemma~\ref{lem:interior-reachability} conditionally on
$X_s$ gives
$$
\begin{aligned}
    \mathbb{P}(X_t\in J,\ \tau>t)
    &=
    \int_{(0,\Lambda_s]}
        \mathbb{P}_{s,x}
        \bigl(
            X_t\in J,\ \tau>t
        \bigr)
        \mu_s(dx)>0.
\end{aligned}
$$
Therefore,
$$
    \mu_t(G)\geq\mu_t(J)>0.
$$
Since $G$ was an arbitrary neighborhood of $y$, we conclude that
$
    (0,\Lambda_t)
    \subseteq
    \operatorname{supp}(\mu_t)$.
The support is closed, and $\Lambda_t\geq a_*>0$. Hence it also
contains both endpoints, so
$$
    [0,\Lambda_t]
    \subseteq
    \operatorname{supp}(\mu_t).
$$
Together with \eqref{eq:support-upper-inclusion}, this proves the
claim.
\end{proof}

\begin{remark}
\label{rem:support-at-initial-time}
Lemma~\ref{lem:support-identification} is stated only for positive
times. A lower density bound near zero does not, by itself, imply
that
$$
    \operatorname{supp}(\mu_0)=[0,\Lambda_0],
$$
because the initial law may have gaps away from the origin. Equality
at time zero should therefore be imposed as a separate assumption if
it is needed.
\end{remark}


\subsection{The Hamilton-Jacobi Equation}
Formally, the Hamilton-Jacobi (HJ) equation is 
\begin{align}
\partial_t V(t,\mu)&=-\frac{1}{2}\int_{(0,\infty)}\partial_{xx}\partial_\mu V(t,\mu)(x)\mu(\d x)-\lambda(\mu) \sup_{r\in \cA(\mu)}\{-\frac{\alpha}{2}\int_{(0,\infty)}\partial_x \partial_\mu V(t,\mu)(x)r(\d x)\}\nonumber\\
V(T,\mu)&=\mu_T((0,+\infty)).
\label{eq:HJB}
\end{align}
We rewrite it as 
$$
-\partial_t V(t,\mu)-\mathcal H[V](t,\mu)=0,
$$
where for any smooth test function $\Phi$,
$$
\mathcal H[\Phi](t,\mu)
:=
\frac12 \int_{(0,\infty)} \partial_{xx}\partial_\mu \Phi(t,\mu)(x)\,\mu(\d x)
+
\lambda(\mu)\sup_{r\in A(\mu)}
\left\{
-\frac{\alpha}{2}\int_{(0,\infty)} \partial_x\partial_\mu \Phi(t,\mu)(x)\,r(\d x)
\right\}.
$$
\begin{theorem}
    The value function $V$ is a viscosity solution of Equation~\eqref{eq:HJB}.
\end{theorem}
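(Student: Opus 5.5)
We follow the standard route for showing that a value function is a viscosity solution: combine the dynamic programming principle with an Itô formula on Wasserstein space. Here $\lambda(\mu)$ is the instantaneous default rate, formally $\frac12\partial_x\frac{\d\mu}{\d x}(0)$. Since this boundary trace is not continuous in the Wasserstein topology, the first step is to fix the state space on which viscosity solutions are tested. We restrict to laws of the form $\mu=m\delta_0+u(x)\,\d x$ with $u$ having a $C^1$ trace at $0$ and a compact support $[0,\Lambda]$. This is the enriched state suggested by Lemma~\ref{lem:support-identification} and by the free-boundary characterization. Test functions $\Phi$ are taken smooth in the linear-derivative sense, with $\partial_x\partial_\mu\Phi$ and $\partial_{xx}\partial_\mu\Phi$ continuous and bounded.

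\textbf{Dynamic programming and Itô's formula.} The first ingredient is the dynamic programming principle
\[
V(t,\mu)=\sup_R V(t+h,\mu_{t+h}^R).
\]
We derive it from the flow property of the controlled law together with measurable selection. By Proposition~\ref{prop: value_convergence}, we may alternatively pass to the limit in the exact discrete DPP $V^\Delta(n\Delta,\mu)=\sup V^\Delta((n+1)\Delta,T_cS_\Delta\mu)$, since the discrete optimizers $T_c$ are explicit.

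The second ingredient is an Itô formula along any admissible flow. From equation \eqref{eq:weak_forward_mu}, for a control $R$ we have
\[
\Phi(t+h,\mu_{t+h})-\Phi(t,\mu)=\int_t^{t+h}\Big(\partial_t\Phi+\tfrac12\langle\mu_s,\partial_{xx}\partial_\mu\Phi\rangle\Big)\d s-\int_t^{t+h}\int\partial_x\partial_\mu\Phi(s,\mu_s)(x)\,R_s(\d x)\,\alpha\,\d\PP[\tau\le s].
\]
Here $\d\PP[\tau\le s]=\lambda(\mu_s)\,\d s$. This identity follows by applying the chain rule for the linear derivative to the weak equation, and it needs no smoothness of $\mu_s$ beyond the existence of the trace.

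\textbf{Subsolution property.} Let $\Phi$ touch $V$ from above at $(t,\mu)$, and apply the DPP with the optimal control from Theorem~\ref{thm: mean_field_optimal_control}. This gives $0\le \Phi(t+h,\mu^*_{t+h})-\Phi(t,\mu)$. Dividing by $h$ and using continuity of $s\mapsto\lambda(\mu_s^*)$ and of the coefficients along the flow yields
\[
-\partial_t\Phi-\tfrac12\langle\mu,\partial_{xx}\partial_\mu\Phi\rangle+\tfrac{\alpha}{2}\lambda(\mu)\int\partial_x\partial_\mu\Phi\,\d\delta_{\Lambda}\le 0 .
\]
The last term is bounded by the Hamiltonian, because the supremum over $r$ dominates the value at $r=\delta_\Lambda$. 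With the sign convention of \eqref{eq:HJB}, this gives one inequality.

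\textbf{Supersolution property.} Let $\Phi$ touch $V$ from below. For any fixed $r\in\cA(\mu)$, we use a control whose allocation measure stays close to $r$ for short times. One choice is to transport $r$ along the flow by restricting it to the current support. Its suboptimality in the DPP gives $\Phi(t+h,\mu^r_{t+h})\le \Phi(t,\mu)$, hence the reverse inequality with $r$ in place of the supremum. Taking the supremum over $r$ then gives the opposite inequality. The main obstacle is making sure both viscosity inequalities have the same sign structure relative to $\mathcal H$. This comes down to checking that the supremum in $\mathcal H$ is attained at $\delta_{\max\mathrm{supp}\,\mu}$. For this we use the second-order dominance monotonicity of $V$, obtained by passing the discrete-time monotonicity theorem to the limit. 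That monotonicity makes $\partial_\mu\Phi$ concave and nondecreasing at touching points, at least in the limit, so $-\partial_x\partial_\mu\Phi$ is maximized at the upper edge of the support. The delicate analytical point is the short-time continuity of $\lambda(\mu_s)$ at $s=t$. We handle it with the boundary-layer minorization from Lemma~\ref{lem:no-upward-jumps} and parabolic regularity near $x=0$, and we restrict the viscosity test to the enriched state space described above.
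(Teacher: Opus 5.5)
Your supersolution step is essentially the paper's Step~2: the DPP with an arbitrary admissible control, the Itô expansion, then the supremum over $r$. Your subsolution step, however, takes a genuinely different route.

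The paper never uses a continuous-time optimal control there. It takes near-maximizers $(t_\Delta,\mu_\Delta)\to(t_0,\mu_0)$ of $V^\Delta-\varphi$ on the time grid and uses the exact one-step identity $V^\Delta(t_\Delta,\mu_\Delta)=V^\Delta(t_\Delta+\Delta,\Gamma^\Delta(\mu_\Delta))$, with the explicit cutoff map $\Gamma^\Delta=T_cS_\Delta$. It then passes to the limit in the difference quotient of $\varphi$, Barles--Souganidis style. You instead run Bellman's principle along the optimal reflected control and expand with the weak forward equation.

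Your route is more direct, and it even gives the inequality with the specific action $\delta_{\max\mathrm{supp}\,\mu}$. It costs two things:
\begin{itemize}
\item an optimal control from every initial state of your test class (available from Proposition~\ref{prop: value_convergence} after a time shift), together with the principle of optimality along it;
\item short-time regularity of the optimal flow at the \emph{initial} instant: $\mu^*_{t+h}\to\mu$ in your enriched topology, $\Lambda_{t+h}\to\max\mathrm{supp}\,\mu$, and $h^{-1}(\PP[\tau\le t+h]-\PP[\tau\le t])\to\lambda(\mu)$.
\end{itemize}
The paper's lemmas give continuity of $\Lambda$ and the support identification only for positive times (Remark~\ref{rem:support-at-initial-time}). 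The boundary-layer minorization of Lemma~\ref{lem:no-upward-jumps} only bounds the default rate from below, so it cannot give this convergence. What delivers it is that near $x=0$ the flow agrees, up to exponentially small errors, with the Dirichlet heat flow, because the reflection acts at heights $\ge a_*$. That argument needs $u(0)=0$ in your state space, not just a $C^1$ trace.

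The paper's route avoids optimal controls and flow regularity altogether. In exchange, it needs locally uniform convergence $V^\Delta\to V$ (the proposition is stated only pointwise at $t=0$). It also needs a consistency estimate for $\Gamma^\Delta$ uniformly along $\mu_\Delta\to\mu_0$, which runs into the same trace and support-edge discontinuity. Your explicit enriched state space is a clarification that both arguments tacitly require.

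Three corrections.

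First, the ``main obstacle'' in your last paragraph is not one. For this sup-type Hamiltonian, the subsolution inequality $-\partial_t\Phi-\mathcal H[\Phi]\le0$ already follows from bounding the supremum below by its value at $r=\delta_\Lambda$, exactly as you wrote. The supersolution needs only arbitrary $r$. So you never need the supremum to be attained at the upper edge. Drop the appeal to $\succsim_2$-monotonicity of $V$: the paper proves it only for $V^\Delta$, and only when the atoms at $0$ coincide.

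Second, with your normalization $\lambda(\mu)=\frac12\partial_x\frac{\mathrm{d}\mu}{\mathrm{d}x}(0)$ and $\mathrm{d}\PP[\tau\le s]=\lambda(\mu_s)\,\mathrm{d}s$, the control term is $\alpha\lambda(\mu)\,\partial_x\partial_\mu\Phi(\Lambda)$, not $\frac\alpha2\lambda(\mu)\,\partial_x\partial_\mu\Phi(\Lambda)$. Matching \eqref{eq:HJB} requires $\lambda(\mu)=\partial_x\frac{\mathrm{d}\mu}{\mathrm{d}x}(0)$.

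Third, ``transporting $r$ along the flow'' is not yet an admissible control. For instance, $r=\delta_{x_0}$ with $x_0$ interior is not a density feedback $\frac{\mathrm{d}R}{\mathrm{d}\mu}$. Either use a local-time control at $x_0$, or approximate $r$ by $r_\varepsilon\ll\mu$ with bounded density and use continuity of $r\mapsto\int\partial_x\partial_\mu\Phi\,\mathrm{d}r$. The paper's Step~2 also leaves this realizability implicit.
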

\begin{proof}
\textit{Step 1}. By definition, the value function satisfies the terminal condition
$$
v(T,\mu)=\mu_T((0,+\infty)).
$$
\textit{Step 2.} We first show that the value function is a viscosity supersolution of~\eqref{eq:HJB}. Let $\phi$ be a smooth test functional such that $V-\phi$ attains a local minimum at $(t_0,\mu_0)$ and
$$
V(t_0,\mu_0)=\phi(t_0,\mu_0).
$$    
Fix $h>0$, and let $(\mu_t^h)_{t\in[t_0,t_0+h]}$ be the controlled flow associated with an admissible control on $[t_0,t_0+h]$, starting from $\mu_{t_0}^h=\mu_0$.

\smallskip
By the dynamic programming principle,
$$
\phi(t_0,\mu_0)=V(t_0,\mu_0)\ge V(t_0+h,\mu_{t_0+h}^h)\ge \phi(t_0+h,\mu_{t_0+h}^h),
$$
hence
$$
\phi(t_0+h,\mu_{t_0+h}^h)-\phi(t_0,\mu_0)\le 0.
$$
Apply Ito's formula and the underlying dynamic, we obtain
\begin{align*}
\phi(t_0+h,\mu_{t_0+h}^h)-\phi(t_0,\mu_0)
&=
h\,\partial_t\phi(t_0,\mu_0)
+\frac{h}{2}\int_{(0,\infty)}
\partial_{xx}\partial_\mu\phi(t_0,\mu_0)(x)\,\mu_0(\d x) \\
&\quad
-\frac{\alpha h}{2}\lambda(\mu_0)
\int_{(0,\infty)}
\partial_x\partial_\mu\phi(t_0,\mu_0)(x)\,r(\d x)
+o(h),
\end{align*}
where $r\in A(\mu_0)$ is the local action induced by the control on $[t_0,t_0+h]$.

\smallskip
Dividing by $h$ and letting $h\downarrow0$, we get
$$
-\partial_t\phi(t_0,\mu_0)
-\frac12\int_{(0,\infty)}
\partial_{xx}\partial_\mu\phi(t_0,\mu_0)(x)\,\mu_0(\d x)
-\lambda(\mu_0)\left\{
-\frac{\alpha}{2}\int_{(0,\infty)}
\partial_x\partial_\mu\phi(t_0,\mu_0)(x)\,r(\d x)
\right\}
\ge 0.
$$
Since $r\in A(\mu_0)$ was arbitrary, it follows that
$$
-\partial_t\phi(t_0,\mu_0)-\mathcal H[\phi](t_0,\mu_0)\ge 0.
$$
Thus $V$ is a viscosity supersolution of~\eqref{eq:HJB}.

\medskip\noindent
\textit{Step 3.} Then we show the value function $V$ is a viscosity subsolution of~\eqref{eq:HJB}. Let $\varphi$ be a smooth test functional such that $V-\varphi$ attains a strict local maximum at
$(t_0,\mu_0)$, and
$$
V(t_0,\mu_0)=\varphi(t_0,\mu_0).
$$
Then $t_0<T$. By a standard localization argument, adding if necessary a penalization vanishing only at
$(t_0,\mu_0)$, we may assume that $V-\phi$ attains a strict global maximum at $(t_0,\mu_0)$. 

\smallskip
From Proposition~\ref{prop: value_convergence}, as $\lim_{\Delta\to 0} V^\Delta=V$, there exist $\varepsilon>0$ and $(t_\Delta,\mu_\Delta)\to(t_0,\mu_0)$,
such that $t_\Delta\in\{0,\Delta,2\Delta,\dots,T\}$ and
$$
\varepsilon+ (V^\Delta-\varphi)(t_\Delta,\mu_\Delta)
\ge 
 (V^\Delta-\varphi)(t,\mu),\qquad \forall\, (t,\mu)\in [0,T]\times \cP_2([0,\infty)]).
$$
Moreover,
$$
(V^\Delta-\varphi)(t_\Delta,\mu_\Delta)\longrightarrow 0\qquad \text{as}\quad \Delta\to 0.
$$
Since $t_0<T$, for all sufficiently small $\Delta$ we have $t_\Delta\le T-\Delta$, and thus
$$
V^\Delta(t_\Delta,\mu_\Delta)
=
V^\Delta(t_\Delta+\Delta,\Gamma^\Delta(\mu_\Delta)),
$$
where $\Gamma^\Delta$ is defined in. Using the maximality of $(t_\Delta,\mu_\Delta)$ for $V^\Delta-\varphi$, we obtain
$$
V^\Delta(t_\Delta+\Delta,\Gamma^\Delta(\mu_\Delta))
-
\varphi(t_\Delta+\Delta,\Gamma^\Delta(\mu_\Delta))
\le
V^\Delta(t_\Delta,\mu_\Delta)-\varphi(t_\Delta,\mu_\Delta),
$$
hence
$$
0
\le
\frac{\varphi(t_\Delta+\Delta,\Gamma^\Delta(\mu_\Delta))
-
\varphi(t_\Delta,\mu_\Delta)}{\Delta}.
$$
Taking the $\limsup$ as $\Delta\downarrow0$, we have
$$
0
\le
\partial_t\varphi(t_0,\mu_0)+\mathcal H[\varphi](t_0,\mu_0).
$$
This shows that $V$ is a viscosity subsolution of~\eqref{eq:HJB}.
\end{proof}

\section{The Finite-Player Problem in Continuous Time}
\label{sec: finite_player}
Previous mean-field control problem can be viewed as the limit of finite-player optimal control problem when the number of players goes to infinity.
In this section, we formulate the corresponding continuous-time finite-player control problem, derive the associated dynamic programming hierarchy and prove the convergence result. The state variable is the vector of distances to default of the banks that are still alive. We work with labelled coordinates for notational convenience, although the value functions are symmetric because the banks are homogeneous.

Fix a terminal time \(T>0\), a loss parameter \(\alpha>0\), and an initial number of banks \(N\in\mathbb N\). For \(n=1,\ldots,N\), let
$$
    E_n := (0,\infty)^n .
$$
A point \(x=(x_1,\ldots,x_n)\in E_n\) represents the distances to default of the \(n\) banks currently alive. Between default times, the coordinates evolve as independent Brownian motions. When one or more coordinates hit \(0\), the corresponding banks default and leave the system. The loss generated by these defaults is then allocated instantaneously among the surviving banks.

Let
$$
    f:[0,\infty)\to\mathbb R
$$
be the terminal payoff function. We assume throughout that \(f\) is non-decreasing and concave. A bank that has defaulted receives terminal payoff \(f(0)\). In many applications \(f(0)=0\).

\subsection{Boundary faces and admissible loss allocations}

For a nonempty subset \(I\subset\{1,\ldots,n\}\), define the boundary face
$$
    F_I^n:=\left\{
        x\in[0,\infty)^n:
        x_i=0 \text{ for } i\in I,\quad
        x_j>0 \text{ for } j\notin I
    \right\}.
$$
If \(x\in F_I^n\), then the banks indexed by \(I\) have just defaulted. Set
$$
    r:=|I|,\qquad m:=n-r,
$$
and write
$$
    y=(x_j)_{j\notin I}\in(0,\infty)^m
$$
for the vector of surviving pre-allocation coordinates. Since each default generates loss \(\alpha\), the total loss created at this boundary event is
$$
    c=\alpha r.
$$

Under the no-allocation-induced-default convention, the post-allocation state \(z=(z_1,\ldots,z_m)\) must satisfy
$$
    0<z_j\le y_j,\qquad j=1,\ldots,m,
$$
and
$$
    \sum_{j=1}^m (y_j-z_j)=\alpha r.
$$
Equivalently, the feasible post-allocation set is
$$
    K_m(y,\alpha r)
    :=
    \left\{
        z\in(0,\infty)^m:
        0<z_j\le y_j\ \forall j,\quad
        \sum_{j=1}^m z_j
        =
        \sum_{j=1}^m y_j-\alpha r
    \right\}.
$$
Thus \(K_m(y,\alpha r)\neq\emptyset\) precisely when the surviving banks have enough aggregate distance to absorb the loss while remaining strictly positive.

If \(m=0\), all currently alive banks have defaulted and the continuation value is zero. If \(m\ge1\) but \(K_m(y,\alpha r)=\emptyset\), then the loss cannot be
absorbed without triggering additional defaults. To keep the finite-player hierarchy closed, we impose the following terminal-cascade convention: whenever \(K_m(y,\alpha r)=\emptyset\), all currently alive banks are declared defaulted. Equivalently, the boundary payoff is then \(n f(0)\). One could instead enlarge the model to include further allocation-induced defaults, but we do not do so in this section.

An admissible Markov allocation policy is a family
$$
    \pi=\{\pi_I^n\}_{1\le n\le N,\ \emptyset\neq I\subset\{1,\ldots,n\}},
$$
where each \(\pi_I^n\) is a Borel selector such that
$$
    \pi_I^n(t,x)
    \in
    K_{n-|I|}\bigl((x_j)_{j\notin I},\alpha |I|\bigr)
$$
whenever the feasible set is nonempty. At a boundary point \(x\in F_I^n\), the post-allocation state is therefore
$$
    z=\pi_I^n(t,x).
$$
The defaulted coordinates are removed, the surviving coordinates are relabelled, and the process continues in the lower-dimensional state space \(E_{n-|I|}\).

\subsection{Controlled process and value functions}

Fix \(t\in[0,T]\), \(n\in\{1,\ldots,N\}\), and \(x\in E_n\). Under an admissible policy \(\pi\), the controlled process is constructed recursively. Starting from
\(X_t=x\), the coordinates evolve as independent Brownian motions until the first default time
$$
    \sigma_1
    :=
    \inf\left\{s\ge t:\min_{1\le i\le n} X_i(s)\le0\right\}\wedge T.
$$
On \([t,\sigma_1)\),
$$
    X_i(s)=x_i+B_i(s)-B_i(t),
    \qquad i=1,\ldots,n.
$$
If \(\sigma_1=T\), the process stops at the terminal time. If
\(\sigma_1<T\), define
$$
    I_1:=\{i:X_i(\sigma_1)=0\}.
$$
The defaulted coordinates \(I_1\) are removed. The surviving coordinates are then reduced according to the policy \(\pi_{I_1}^n\), and the process continues in dimension \(n-|I_1|\). Since at least one bank is removed at each boundary transition, there are at most \(n\) boundary transitions before time \(T\).

Let \(M_T^\pi\) denote the number of banks still alive at time \(T\), and let \(X_T^\pi\in E_{M_T^\pi}\) denote their terminal distances to default. The unnormalised payoff is
\begin{equation}
\label{eq:finite_agent_value}
    J^n(t,x;\pi):=\mathbb E_{t,x}\left[\sum_{i=1}^{M_T^\pi} f(X_{T,i}^\pi)+(n-M_T^\pi) f(0)\right].
\end{equation}
Equivalently, defaulted banks may be kept at the absorbing state \(0\), and the payoff is the total terminal payoff of the \(n\) banks that were alive at time
\(t\).

Define the unnormalised value function
$$
    U^n(t,x):=\sup_{\pi} J^n(t,x;\pi),\qquad (t,x)\in[0,T]\times E_n,
$$
and the normalised value function
$$
    V^n(t,x)
    :=
    \frac1n U^n(t,x).
$$
Thus \(U^n\) is the maximal expected total payoff of the \(n\) currently alive banks, while \(V^n\) is the value per currently alive bank. We also use the convention
$ U^0\equiv0$.

The dynamic programming principle states that, for any stopping time
\(\theta\in[t,T]\),
$$
    U^n(t,x)
    =
    \sup_{\pi}
    \mathbb E_{t,x}
    \left[
        (n-M_\theta^\pi)f(0)
        +
        U^{M_\theta^\pi}(\theta,X_\theta^\pi)
    \right],
$$
where \(X_\theta^\pi\) denotes the post-allocation state of the surviving bank at time \(\theta\). In particular, at the terminal time,
$$
    U^n(T,x)=\sum_{i=1}^n f(x_i),
    \qquad
    V^n(T,x)=\frac1n\sum_{i=1}^n f(x_i).
$$

\subsection{Interior equation}

Inside \(E_n\), no control is exercised before the next default time. Hence the generator is the \(n\)-dimensional Brownian generator \(\frac12\Delta_n\). Formally, whenever \(V^n\) is smooth in the interior, it satisfies
$$
    \partial_t V^n(t,x)
    +
    \frac12\sum_{i=1}^n \partial_{x_i x_i}V^n(t,x)
    =
    0,
    \qquad
    (t,x)\in[0,T)\times E_n.
$$
Equivalently,
$$
    \partial_t U^n(t,x)
    +
    \frac12\sum_{i=1}^n \partial_{x_i x_i}U^n(t,x)
    =
    0.
$$

\subsection{Dynamic boundary condition}

Let \(x\in F_I^n\), with \(r=|I|\), \(m=n-r\), and surviving vector
\(y=(x_j)_{j\notin I}\). If \(m=0\), then all banks have defaulted and
$$
    U^n(t,x)=n f(0),
    \qquad
    V^n(t,x)=f(0).
$$
Assume now that \(m\ge1\) and \(K_m(y,\alpha r)\neq\emptyset\). At the boundary
event, the \(r\) defaulted banks contribute payoff \(r f(0)\), and the planner
chooses the post-allocation state \(z\in K_m(y,\alpha r)\) for the surviving
banks. Therefore the dynamic programming principle gives
$$
    U^n(t,x)
    =
    r f(0)
    +
    \sup_{z\in K_m(y,\alpha r)} U^m(t,z).
$$
In the normalized form, using \(U^k=kV^k\),
$$
    V^n(t,x)
    =
    \frac r n f(0)
    +
    \frac m n
    \sup_{z\in K_m(y,\alpha r)} V^m(t,z).
$$
If \(K_m(y,\alpha r)=\emptyset\), then under the terminal-cascade convention,
$$
    U^n(t,x)=n f(0),
    \qquad
    V^n(t,x)=f(0).
$$

For example, on a one-default face
$$
    x=(0,y_1,\ldots,y_{n-1}),
    \qquad y\in(0,\infty)^{n-1},
$$
the boundary condition becomes
$$
    V^n(t,0,y)
    =
    \frac1n f(0)
    +
    \frac{n-1}{n}
    \sup_{z\in K_{n-1}(y,\alpha)}V^{n-1}(t,z).
$$
When \(f(0)=0\), this reduces to
$$
    V^n(t,0,y)
    =
    \frac{n-1}{n}
    \sup_{z\in K_{n-1}(y,\alpha)}V^{n-1}(t,z).
$$

\subsection{The cutoff allocation}

We now identify the boundary optimizer under the structural properties that will be used later. Suppose that, for each \(m<n\), the continuation value \(V^m(t,\cdot)\) is symmetric, coordinatewise non-decreasing, and Schur-concave. Then the boundary optimization in \eqref{eq:finite_agent_value} is solved by concentrating the loss on the currently healthiest surviving banks.

Let \(y\in(0,\infty)^m\) and let \(c\in[0,\sum_{j=1}^m y_j)\). Define the cutoff
level \(\kappa=\kappa_m(y,c)\) by
$$
    \sum_{j=1}^m (y_j-\kappa)^+ = c.
$$
For \(c=0\), we take \(\kappa\ge\max_j y_j\), so that no loss is allocated. For
\(c>0\), the solution is unique with \(\kappa\in(0,\max_j y_j)\). Define
$$
\Gamma_c(y):=\bigl(y_1\wedge\kappa,\ldots,y_m\wedge\kappa\bigr).
$$
Equivalently, the loss allocated to surviving bank \(j\) is
$$
    \ell_j^*=y_j-\Gamma_c(y)_j=(y_j-\kappa)^+.
$$
Thus only banks above the cutoff level \(\kappa\) are taxed, and all such banks are reduced to \(\kappa\).

The vector \(\Gamma_c(y)\) belongs to \(K_m(y,c)\). Moreover, by the standard water-filling majorization argument, \(\Gamma_c(y)\) is majorized by every \(z\in K_m(y,c)\). Therefore, if \(V^m(t,\cdot)\) is symmetric and Schur-concave,
then
$$
    V^m(t,\Gamma_c(y))
    \ge
    V^m(t,z),
    \qquad z\in K_m(y,c).
$$
Consequently,
$$
    \sup_{z\in K_m(y,c)} V^m(t,z)
    =
    V^m(t,\Gamma_c(y)).
$$
Applying this with \(c=\alpha r\), the boundary condition becomes
$$
    V^n(t,x)
    =
    \frac r n f(0)
    +
    \frac m n
    V^m\bigl(t,\Gamma_{\alpha r}(y)\bigr),
    \qquad x\in F_I^n.
$$
When \(f(0)=0\),
$$
    V^n(t,x)
    =
    \frac m n
    V^m\bigl(t,\Gamma_{\alpha r}(y)\bigr).
$$

\subsection{The finite-player hierarchy}

Combining the interior equation, terminal condition, and dynamic boundary
condition gives the finite-player hierarchy. For \(n=1,\ldots,N\),
$$
\begin{cases}
\displaystyle
\partial_t V^n(t,x)
+
\frac12\sum_{i=1}^n \partial_{x_i x_i}V^n(t,x)
=
0,
&
(t,x)\in[0,T)\times(0,\infty)^n,
\\[1.2em]
\displaystyle
V^n(T,x)
=
\frac1n\sum_{i=1}^n f(x_i),
&
x\in(0,\infty)^n,
\\[1.2em]
\displaystyle
V^n(t,x)
=
\frac r n f(0)
+
\frac{n-r}{n}
\sup_{z\in K_{n-r}(y,\alpha r)}
V^{\,n-r}(t,z),
&
x\in F_I^n,\quad r=|I|<n,
\\[1.2em]
\displaystyle
V^n(t,x)=f(0),
&
x\in F_I^n,\quad |I|=n.
\end{cases}
$$
Here \(y=(x_j)_{j\notin I}\). On infeasible boundary points with
\(K_{n-r}(y,\alpha r)=\emptyset\), the third line is replaced by the
terminal-cascade value \(V^n(t,x)=f(0)\).

Under the taxing-the-richest rule, the boundary line in \eqref{} becomes
$$
    V^n(t,x)
    =
    \frac r n f(0)
    +
    \frac{n-r}{n}
    V^{\,n-r}
    \bigl(t,\Gamma_{\alpha r}(y)\bigr),
    \qquad x\in F_I^n,\quad r=|I|<n.
$$
In the common case \(f(0)=0\), this further reduces to
$$
    V^n(t,x)
    =
    \frac{n-r}{n}
    V^{\,n-r}
    \bigl(t,\Gamma_{\alpha r}(y)\bigr).
$$

The hierarchy is triangular: the interior equation for \(V^n\) is an \(n\)-dimensional heat equation, and its boundary data are expressed in terms of value functions \(V^m\) with \(m<n\). Thus the system can be solved recursively in the number of surviving banks, starting from \(n=1\).

\subsection{Mean-field limit of the finite-particle cutoff system}
We now connect the finite-particle cutoff dynamics with the continuous-time mean-field problem. The point of this subsection is to show that the taxing-the-richest finite-particle rule has the expected mean-field limit as the number of banks tends to infinity.

Let $x^N=(x_1^N,\ldots,x_N^N)\in (0,\infty)^N$, and define the initial empirical measure
$
\mu_0^N:=\frac{1}{N}\sum_{i=1}^N \delta_{x_i^N}.
$
We consider the finite-particle dynamics under the cutoff allocation rule. Defaulted particles are kept at the absorbing state $0$, and we write
$
\mu_t^N:=\frac{1}{N}\sum_{i=1}^N \delta_{X_t^{i,N}},
$
where $X_t^{i,N}=0$ after particle $i$ has defaulted. We also set
$$
\nu_t^N:=\mu_t^N\big|_{(0,\infty)}, 
\qquad 
q_t^N:=\mu_t^N(\{0\}),
\qquad
\ell_t^N:=\alpha q_t^N.
$$
Thus $q_t^N$ is the fraction of banks that have defaulted by time $t$, while $\ell_t^N$ is the cumulative system loss per initial bank.

For a finite sub-probability measure $\nu$ on $(0,\infty)$ and $c\geq 0$, define the cutoff tax operator $\Theta_c$ by
$$
\Theta_c\nu := (x\mapsto x\wedge \kappa_{\nu,c})_\# \nu,
$$
where $\kappa_{\nu,c}$ is chosen so that
$$
\int_{(0,\infty)} (x-\kappa_{\nu,c})^+\,\nu(\d x)=c,
$$
whenever such a cutoff exists. If $c=0$, we set $\Theta_0\nu=\nu$.

At a finite-particle default time $\sigma$, suppose that $r_\sigma^N$ particles default. Then
$$
\Delta q_\sigma^N=\frac{r_\sigma^N}{N}.
$$
The total loss in the finite system is $\alpha r_\sigma^N$. Thus, after removing the newly defaulted particles, the surviving empirical sub-measure is transformed by
$$
\nu_\sigma^N
=
\Theta_{\alpha\Delta q_\sigma^N}\nu_{\sigma-}^{N,0},\qquad \int_{(0,\infty)} (x-\kappa_\sigma^N)^+\,\nu_{\sigma-}^{N,0}(\d x)
=
\alpha\Delta q_\sigma^N.
$$
where $\nu_{\sigma-}^{N,0}$ denotes the empirical sub-measure of the surviving particles immediately after the newly defaulted particles have been moved to $0$, but before their loss has been reallocated, and $\kappa_\sigma^N$ denotes the finite-particle cutoff.

\begin{assumption}
\label{ass: particle_convergence}
    Assume the following.

\textup{(A1)} There exists $\mu_0\in\mathcal P_1(\mathbb R_+)$, compactly supported in $[0,\Lambda_0]$, such that
$$
\mu_0^N\to \mu_0
\quad\text{in } W_1,
\qquad
\max_{1\leq i\leq N}x_i^N\to \Lambda_0,\qquad \int_{\mathbb R_+} x\,\mu_0(\d x)>\alpha.
$$

\textup{(A2)} The limiting free-boundary problem is well posed on $[0,T]$. More precisely, there exists a unique pair $(X,\Lambda)$ such that
$$
X_t=
\begin{cases}
X_0+B_t-L_t, & t<\tau,\\
0, & t\geq \tau,
\end{cases}
\qquad
\tau:=\inf\{t\geq 0:X_t\leq 0\},
$$
where the loss process is the Skorokhod reflection at the upper boundary $\Lambda$ stopped after default:
$$
L_t
=
\sup_{0\leq s\leq t\wedge \tau}
(X_0+B_s-\Lambda_s)^+,\qquad \mathbb E[L_t]=\alpha\mathbb P(\tau\leq t),
\qquad
t\in[0,T].
$$
Let
$$
\mu_t:=\mathcal L(X_t),
\qquad
q_t:=\mathbb P(\tau\leq t),
\qquad
\ell_t:=\mathbb E[L_t].
$$
Assume that $t\mapsto \mu_t$, $t\mapsto q_t$, and $t\mapsto \ell_t$ are continuous, and that
$$
\Lambda_t=\max\operatorname{supp}\mu_t^+,
\qquad
\mu_t^+:=\mu_t|_{(0,\infty)}.
$$

\textup{(A3)} The finite-particle system has no macroscopic terminal cascades before time $T$:
$$
\mathbb P(\mathcal C_T^N)\to 0,
$$
where $\mathcal C_T^N$ is the event that the terminal-cascade convention is invoked before time $T$. Moreover,
$$
\sup_{\sigma\leq T}\Delta q_\sigma^N\to 0
\quad\text{in probability},
$$
where the supremum is taken over finite-particle default times. Finally, the active upper edge
$$
\Lambda_t^N:=\max\operatorname{supp}\nu_t^N
$$
is tight jointly with $\mu^N$, and every subsequential limit $(\bar\mu,\bar\Lambda)$ satisfies
$$
\bar\Lambda_t=\max\operatorname{supp}\bar\mu_t^+
$$
at every continuity time of $\bar\mu$.
\end{assumption}

\begin{lemma}[Cutoff concentration at the upper edge]
\label{lem: cutoff_concentration}
Let $\nu^N$ be finite sub-probability measures on $(0,\infty)$, and let $\nu$ be a compactly supported finite sub-probability measure on $(0,\infty)$. Suppose
$$
\nu^N\to \nu
\quad\text{in } W_1,
\qquad
\max\operatorname{supp}\nu^N\to \Lambda:=\max\operatorname{supp}\nu.
$$
Let $c_N\downarrow 0$, and let $\kappa_N$ solve
$$
\int_{(0,\infty)}(x-\kappa_N)^+\,\nu^N(\d x)=c_N.
$$
Then
$
\kappa_N\to \Lambda$. Moreover, if
$$
R_N(\d x):=
\frac{(x-\kappa_N)^+}{c_N}\nu^N(\d x),
$$
then
$$
R_N\Rightarrow \delta_\Lambda.
$$
\end{lemma}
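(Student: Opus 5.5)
The plan is to first prove $\kappa_N\to\Lambda$ by squeezing, using the function $g_N(k):=\int(x-k)^+\,\nu^N(\d x)$. This function is continuous and non-increasing in $k$, it vanishes for $k\ge\Lambda_N:=\max\operatorname{supp}\nu^N$, and it is strictly decreasing on $\{k<\Lambda_N\}$ wherever it is positive. Since $c_N>0$ (we may assume $c_N>0$ for all large $N$), we get $\kappa_N<\Lambda_N$. Because $\Lambda_N\to\Lambda$, this gives $\limsup_N\kappa_N\le\Lambda$.

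For the lower bound, fix $\eta>0$. Since $\Lambda=\max\operatorname{supp}\nu$, we have $\nu((\Lambda-\eta/2,\infty))>0$, so
\begin{align*}
\int(x-\Lambda+\eta)^+\,\nu(\d x)\ge\frac{\eta}{2}\,\nu\big((\Lambda-\eta/2,\infty)\big)=:2\delta>0.
\end{align*}
The map $x\mapsto(x-\Lambda+\eta)^+$ is $1$-Lipschitz. Since $\nu^N\to\nu$ in $W_1$ for sub-probability measures, I read this as convergence of integrals of Lipschitz functions, including convergence of total masses. Hence $g_N(\Lambda-\eta)\to\int(x-\Lambda+\eta)^+\,\nu(\d x)\ge2\delta$, so $g_N(\Lambda-\eta)>\delta>c_N$ for large $N$. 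Monotonicity of $g_N$ then forces $\kappa_N>\Lambda-\eta$. Together with the upper bound, this gives $\kappa_N\to\Lambda$.

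Next I will show that $R_N\Rightarrow\delta_\Lambda$. First, $R_N$ is a probability measure: it is nonnegative, and its total mass equals $g_N(\kappa_N)/c_N=1$. Its support lies in $[\kappa_N,\Lambda_N]$, because $(x-\kappa_N)^+$ vanishes below $\kappa_N$ and $\nu^N$ has no mass above $\Lambda_N$. Both endpoints converge to $\Lambda$. So for any bounded continuous $\phi$,
\begin{align*}
\Big|\int\phi\,\d R_N-\phi(\Lambda)\Big|\le\sup_{x\in[\kappa_N,\Lambda_N]}|\phi(x)-\phi(\Lambda)|\to0,
\end{align*}
using continuity of $\phi$ at $\Lambda$ together with the shrinking of $[\kappa_N,\Lambda_N]$ to $\{\Lambda\}$. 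The second claim then follows directly from the first.

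The main (mild) obstacle is the lower bound on $\kappa_N$. It requires positivity of $\nu$ near its upper edge, which holds by the definition of $\operatorname{supp}$. It also requires using $W_1$ convergence for sub-probability measures, where I will check that convergence of masses is included, or else use directly that $(x-k)^+$ is Lipschitz and tends to $0$ at $x=0$. The hypothesis $c_N\downarrow0$ enters only in the lower bound, where it guarantees $c_N<\delta$ eventually.
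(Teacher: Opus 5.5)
Your proposal is correct and follows essentially the same route as the paper. The lower bound $\kappa_N\ge\Lambda-\varepsilon$ comes from $W_1$-convergence of $\int(x-(\Lambda-\varepsilon))^+\,\nu^N(\mathrm{d}x)$ to a strictly positive limit that eventually exceeds $c_N$; the upper bound comes from $\kappa_N\le\max\operatorname{supp}\nu^N\to\Lambda$; and $R_N\Rightarrow\delta_\Lambda$ follows because $R_N$ is a probability measure on $[\kappa_N,\max\operatorname{supp}\nu^N]$. Your extra checks fill in details the paper leaves implicit: that $R_N$ has unit mass, that $c_N>0$ eventually, and that the test function $(x-k)^+$ vanishes at $0$, so $W_1$ convergence of sub-probability measures causes no ambiguity.
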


\begin{proof}
Fix $\varepsilon>0$. Since $\Lambda=\max\operatorname{supp}\nu$, we have
$$
\int_{(0,\infty)}(x-(\Lambda-\varepsilon))^+\,\nu(\d x)>0.
$$
By $W_1$ convergence,
$$
\int_{(0,\infty)}(x-(\Lambda-\varepsilon))^+\,\nu^N(\d x)
\to
\int_{(0,\infty)}(x-(\Lambda-\varepsilon))^+\,\nu(\d x)>0.
$$
Since $c_N\to 0$, for all sufficiently large $N$,
$
\int_{(0,\infty)}(x-(\Lambda-\varepsilon))^+\,\nu^N(dx)>c_N.
$
By the definition of $\kappa_N$, this implies
$
\kappa_N\geq \Lambda-\varepsilon.
$
On the other hand,
$
\kappa_N\leq \max\operatorname{supp}\nu^N\to \Lambda.
$
Therefore,
$
\kappa_N\to \Lambda.
$ Finally, since $R_N$ is supported on
$
[\kappa_N,\max\operatorname{supp}\nu^N],
$ both endpoints converge to $\Lambda$. Hence
$
R_N\Rightarrow \delta_\Lambda.
$
\end{proof}

\begin{theorem}[Mean-field limit of the finite-particle cutoff system]
\label{thm: particle_convergence}
Under Assumption~\ref{ass: particle_convergence}, we have
$$
\mu^N \Longrightarrow \mu
\quad\text{in probability in } \mathcal D([0,T],\mathcal P_1(\mathbb R_+)).
$$
Since the limiting path $t\mapsto\mu_t$ is continuous, the convergence is in fact uniform in time:
$$
\sup_{0\leq t\leq T} W_1(\mu_t^N,\mu_t)\to 0
\quad\text{in probability}.
$$
Moreover,
$$
\sup_{0\leq t\leq T}|q_t^N-q_t|\to 0,
\qquad
\sup_{0\leq t\leq T}|\ell_t^N-\ell_t|\to 0,
$$
in probability. Equivalently,
$$
\mu_t^N\to\mu_t,
\qquad
\frac{D_t^N}{N}\to \mathbb P(\tau\leq t),
\qquad
\frac{1}{N}\sum_{i=1}^N L_t^{i,N}\to \mathbb E[L_t]
=
\alpha\mathbb P(\tau\leq t),
$$
uniformly on $[0,T]$ in probability.
\end{theorem}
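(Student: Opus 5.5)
We follow the standard tightness–identification–uniqueness route, with Lemma~\ref{lem: cutoff_concentration} doing the work of identifying the singular loss term.

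\textbf{Step 1: semimartingale decomposition.} For $\phi\in C_c^2([0,\infty))$ with $\phi$ constant near $0$ (or $\phi(0)$ handled separately through $q^N$), It\^o's formula between default times, together with the jumps at default times, gives
\begin{align*}
\langle\phi,\mu_t^N\rangle=\langle\phi,\mu_0^N\rangle+\frac12\int_0^t\langle\phi'',\nu_s^N\rangle\,\d s+M_t^N-\sum_{\sigma\le t}\bigl(\langle\phi,\nu_{\sigma-}^{N,0}\rangle-\langle\phi,\Theta_{\alpha\Delta q_\sigma^N}\nu_{\sigma-}^{N,0}\rangle\bigr)+\text{(default jumps)}.
\end{align*}
Here $M_t^N=\frac1N\sum_i\int\phi'(X^{i,N})\,\d B^i$ has quadratic variation $O(T\|\phi'\|_\infty^2/N)$, so it vanishes uniformly in probability by Doob's inequality. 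For each default time we write the tax jump as
\begin{align*}
\int\bigl(\phi(x)-\phi(x\wedge\kappa_\sigma^N)\bigr)\nu_{\sigma-}^{N,0}(\d x)=\alpha\Delta q_\sigma^N\int\phi'(\xi_\sigma(x))\,R_\sigma^N(\d x),
\end{align*}
where $\xi_\sigma(x)\in[\kappa_\sigma^N,x]$ and $R_\sigma^N$ is the normalized tax measure of Lemma~\ref{lem: cutoff_concentration}. Since the banks are homogeneous, we may also write $\ell^N=\alpha q^N$ directly.

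\textbf{Step 2: tightness.} We prove tightness of $(\mu^N,q^N,\Lambda^N)$ in $\mathcal D$. Three bounds control the increments of $\langle\phi,\mu^N\rangle$:
\begin{itemize}
\item the drift term has increment $O(h)$ over intervals of length $h$;
\item the martingale term is negligible by Step 1;
\item the tax term is bounded by $\alpha\|\phi'\|_\infty(q^N_{t+h}-q^N_t)$, and the default jumps satisfy a similar bound.
\end{itemize}
The remaining task is equicontinuity of $q^N$. We obtain it from (A3), since $\sup\Delta q^N\to0$, combined with a coupling: every particle is at most a free Brownian motion, so the expected number of defaults in $[t,t+h]$ is controlled by the number of particles within distance $h^{1/2-\epsilon}$ of zero, plus the Brownian modulus. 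I am least sure about this bound. Failing a proof, I would use the joint tightness already assumed in (A3) and Aldous' criterion. A uniform first-moment bound (means are non-increasing) gives $W_1$-compactness, and (A3) supplies tightness of $\Lambda^N$.

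\textbf{Step 3: identification of limits.} This is the main obstacle. Take a subsequential limit $(\bar\mu,\bar q,\bar\Lambda)$, Skorokhod-represented a.s. On the event $(\mathcal C_T^N)^c$, whose probability tends to one by (A3), we group default times in a window $[s,s+h]$. Since $\Delta q^N\to0$, each cutoff satisfies $\kappa_\sigma^N\to\bar\Lambda_s$ by Lemma~\ref{lem: cutoff_concentration}, applied with $\nu^N=\nu_{\sigma-}^{N,0}$ and the edge convergence from (A3) at continuity times of $\bar\mu$. Hence $\phi'(\xi_\sigma)\to\phi'(\bar\Lambda_s)$ uniformly on $R_\sigma^N$'s support, and the tax term converges to $\int_0^t\phi'(\bar\Lambda_s)\,\alpha\,\d\bar q_s$. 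To handle the interplay of many small jumps with a moving edge, I would apply the lemma along a Riemann partition and pass to the limit using continuity of $\bar\Lambda$ off a countable set. For the default term, $\bar q$ is identified as the absorbed mass $\bar\mu_t(\{0\})$. The limit therefore satisfies the weak equation \eqref{eq:weak_forward_mu} with $\d\ell=\alpha\,\d\bar q$.

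\textbf{Step 4: uniqueness and upgrading the convergence.} We interpret the limiting weak equation as the law of the reflected, absorbed process at boundary $\bar\Lambda$. Given $\bar\Lambda$, the linear forward equation has a unique solution, as in Step~4 of Lemma~\ref{lem:ClassicalImpliesProbabilistic}. So the pair $(\bar\mu,\bar\Lambda)$ is a probabilistic solution with $\bar\Lambda_t=\max\operatorname{supp}\bar\mu_t^+$, and the uniqueness in (A2) gives $\bar\mu=\mu$. The limit is deterministic and identical along every subsequence, so convergence in probability follows. Continuity of $\mu$, $q$ and $\ell$ turns Skorokhod convergence into uniform convergence, and $\ell^N=\alpha q^N$ gives the last claim.
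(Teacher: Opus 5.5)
Your proposal follows essentially the same route as the paper's proof: a semimartingale decomposition of $\langle\varphi,\mu_t^N\rangle$ with a vanishing martingale, rewriting each cutoff jump through the loss-weighted measure $R_\sigma^N$ and using Lemma~\ref{lem: cutoff_concentration} to obtain $-\alpha\int_0^t\varphi'(\bar\Lambda_s)\,\mathrm{d}\bar q_s$, then tightness, identification of $\bar q_t=\bar\mu_t(\{0\})$, and uniqueness of the limit through (A2), with continuity of the limit turning Skorokhod convergence into uniform convergence. The paper handles the step you flag only by monotonicity of $q^N$ plus the joint tightness in (A3); your comparison $X_t^{i,N}\le x_i^N+B_t^i$ bounds defaults from below rather than above, whereas off the cascade event the identity $\alpha q_t^N=\langle x,\mu_0^N\rangle-\langle x,\mu_t^N\rangle+\frac1N\sum_i B^i_{t\wedge\tau_i^N}$ transfers tightness of $\mu^N$ directly to $q^N$, and $\sup_\sigma\Delta q_\sigma^N\to0$ then makes $q^N$ C-tight, which is the continuity of $\bar q$ your identification step relies on.
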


The following corollary is directly from Theorem~\ref{thm: particle_convergence}.
\begin{corollary}
    For every continuous non-decreasing concave function $f:[0,\infty)\to\mathbb R$ with at most linear growth, 
$$
\lim_{N\to\infty}
\mathbb E\left[\langle f,\mu_T^N\rangle\right]
=
\langle f,\mu_T\rangle.
$$
Moreover,
$$
\lim_{N\to\infty} V^N(0,x^N)
=
V(0,\mu_0).
$$
For the survival payoff $f:=1_{(0,\infty)}$, we have
$$
\lim_{N\to\infty}
\mathbb E[1-q_T^N]
=
1-q_T
=
\mu_T((0,\infty)).
$$
\end{corollary}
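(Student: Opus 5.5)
The corollary has three assertions, and I will derive each from Theorem~\ref{thm: particle_convergence} in turn.

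\emph{First assertion.} Fix $f$ continuous, non-decreasing, concave, with $|f(x)|\le C(1+x)$. The map $\nu\mapsto\langle f,\nu\rangle$ is continuous on $\mathcal P_1(\RR^+)$ under $W_1$, because $W_1$-convergence means weak convergence together with convergence of first moments. Theorem~\ref{thm: particle_convergence} gives $W_1(\mu_T^N,\mu_T)\to0$ in probability, so $\langle f,\mu_T^N\rangle\to\langle f,\mu_T\rangle$ in probability. To pass to expectations I need uniform integrability of $\langle f,\mu_T^N\rangle$, which reduces to bounding $\langle x,\mu_T^N\rangle$. Here I use that the cutoff allocation only decreases coordinates, so $X_T^{i,N}\le x_i^N+\sup_{s\le T}|B_s^i|$. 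By (A1), $\max_i x_i^N$ is bounded, hence
\begin{align*}
\langle x,\mu_T^N\rangle\le \max_i x_i^N+\frac1N\sum_{i=1}^N\sup_{s\le T}|B^i_s|.
\end{align*}
This bound has uniformly bounded second moment, which gives uniform integrability and therefore convergence of expectations.

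\emph{Survival payoff.} For $f=\bone_{(0,\infty)}$ the payoff is not continuous, so I use the statement on $q^N$ instead. We have $\langle f,\mu_T^N\rangle=1-q_T^N$, and Theorem~\ref{thm: particle_convergence} gives $q_T^N\to q_T$ in probability. Since the $q_T^N$ are bounded by $1$, bounded convergence yields $\EE[1-q_T^N]\to1-q_T=\mu_T((0,\infty))$.

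\emph{Value functions.} This is the main obstacle: $V^N(0,x^N)$ is a supremum over policies, while the theorem only describes the cutoff policy. I would argue in two directions.

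For the lower bound, the cutoff policy is admissible, so $V^N(0,x^N)\ge J^N(0,x^N;\text{cutoff})/N=\EE\langle f,\mu_T^N\rangle$. This tends to $\langle f,\mu_T\rangle=V(0,\mu_0)$ by the first assertion together with Proposition~\ref{prop: value_convergence} and Theorem~\ref{thm: mean_field_optimal_control}, which identify the reflected free-boundary flow as the optimal mean-field flow.

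For the upper bound, I would invoke the finite-player cutoff section: assuming the continuation values $V^m$ are symmetric, non-decreasing and Schur-concave, the boundary supremum is attained at $\Gamma_{\alpha r}(y)$. The cutoff policy is then optimal for the $N$-player problem, so $V^N(0,x^N)=\EE\langle f,\mu_T^N\rangle$ exactly, and the limit follows from the first assertion.

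If Schur-concavity were not available, I would fall back on a propagation-of-chaos argument. Empirical measures under an arbitrary policy would be shown to be tight. Each limit point would be a feasible mean-field flow, dominated in $\succsim_2$ order by the optimal one, which is the continuous analogue of the stochastic-dominance argument used for the discrete-time problem. I regard this fallback as the weakest link, so the proof would rest on the Schur-concavity structure.
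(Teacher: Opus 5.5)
Your proposal follows the paper's own argument (Step~7 of the proof of the mean-field limit theorem for the finite-particle cutoff system). The three ingredients match the paper's:
\begin{itemize}
\item $W_1$-continuity of $\nu\mapsto\langle f,\nu\rangle$ together with uniform integrability of first moments gives the first claim.
\item The convergence $q_T^N\to q_T$ gives the survival payoff.
\item The identifications $V^N(0,x^N)=\mathbb E[\langle f,\mu_T^N\rangle]$ and $V(0,\mu_0)=\langle f,\mu_T\rangle$ come from optimality of the cutoff rule in the finite-player problem and of the reflected feedback in the mean-field problem. As in the paper, the finite-player optimality rests on the symmetry and Schur-concavity hypothesis of the cutoff-allocation subsection.
\end{itemize}
Your explicit moment bound $\langle x,\mu_T^N\rangle\le\max_i x_i^N+\frac1N\sum_i\sup_{s\le T}|B_s^i|$ and your bounded-convergence treatment of the discontinuous payoff $\mathbf{1}_{(0,\infty)}$ make precise two steps that the paper only asserts.
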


\begin{proof}[Proof of Theorem~\ref{thm: particle_convergence}]
We prove the convergence for the cutoff-controlled finite-particle system. The convergence of values then follows from the optimality of the cutoff rule in the finite-particle problem and the optimality of the limiting feedback in the mean-field problem.

\emph{Step 1:set-up.}
Let $\sigma$ be a finite-particle default time, and suppose that $r_\sigma^N$ particles default at $\sigma$. Then in the finite-particle formulation, the total loss to be allocated is
$
\alpha r_\sigma^N.
$
Let the surviving pre-allocation vector be $y=(y_1,\ldots,y_m)$, the cutoff rule chooses $\kappa_\sigma^N$ such that
$$
\frac{1}{N}\sum_{j=1}^m (y_j-\kappa_\sigma^N)^+
=
\frac{\alpha r_\sigma^N}{N}
=
\alpha\Delta q_\sigma^N,\qquad \int_{(0,\infty)}(x-\kappa_\sigma^N)^+\,\nu_{\sigma-}^{N,0}(\d x)
=
\alpha\Delta q_\sigma^N.
$$
We denote the empirical post-allocation measure by
$
\nu_\sigma^N
:=
\Theta_{\alpha\Delta q_\sigma^N}\nu_{\sigma-}^{N,0}.$

On the event that no terminal cascade occurs before $T$, every default creates loss $\alpha$, and hence the average cumulative allocated loss  $\ell_t^N$ satisfies 
$$
\ell_t^N
=
\frac{1}{N}\sum_{i=1}^N L_t^{i,N}
=
\alpha q_t^N.
$$
By Assumption~\ref{ass: particle_convergence}(A3), the event on which this identity can fail has probability tending to zero.

\emph{Step 2: semimartingale equation for the empirical measure.}
Fix $\varphi\in C_b^2(\mathbb R_+)$. Between default times, alive particles evolve as independent Brownian motions, while defaulted particles remain at $0$. Applying Itô's formula to the stopped particles and adding the allocation jumps yields
$$
\langle \varphi,\mu_t^N\rangle
=
\langle \varphi,\mu_0^N\rangle
+
M_t^{N,\varphi}
+
\frac{1}{2}\int_0^t
\langle \varphi'',\nu_s^N\rangle\,\d s
+
A_t^{N,\varphi},
$$
where
$$
M_t^{N,\varphi}
:=
\frac{1}{N}\sum_{i=1}^N
\int_0^{t\wedge \tau_i^N}
\varphi'(X_s^{i,N})\,\d B_s^i,
$$
and $A_t^{N,\varphi}$ is the cumulative contribution of the cutoff reallocations.

Since
$$
\mathbb E\left[\langle M^{N,\varphi}\rangle_T\right]
=
\mathbb E\left[
\frac{1}{N^2}\sum_{i=1}^N
\int_0^{T\wedge \tau_i^N}
|\varphi'(X_s^{i,N})|^2\,\d s
\right]
\leq
\frac{T\|\varphi'\|_\infty^2}{N},
$$
The martingale term vanishes as $N\to\infty$. Hence, by Doob's inequality,
$$
\sup_{0\leq t\leq T}|M_t^{N,\varphi}|\to 0
\quad\text{in } L^2.
$$

\emph{Step 3: identification of the allocation term.}
At a default time $\sigma$, define the loss-weighted empirical measure
$$
R_\sigma^N(\d x)
:=
\frac{(x-\kappa_\sigma^N)^+}{\alpha\Delta q_\sigma^N}
\,\nu_{\sigma-}^{N,0}(\d x),
$$
whenever $\Delta q_\sigma^N>0$. Then $R_\sigma^N$ is a probability measure supported on $[\kappa_\sigma^N,\Lambda_{\sigma-}^N]$ at time $\sigma$.

The jump in the empirical observable is
$$
\Delta A_\sigma^{N,\varphi}
=
\int_{(0,\infty)}
\left[
\varphi(x\wedge \kappa_\sigma^N)-\varphi(x)
\right]
\nu_{\sigma-}^{N,0}(\d x).
$$
Using the first-order expansion along the segment from $x$ to $x\wedge \kappa_\sigma^N$,
$$
\begin{aligned}
\Delta A_\sigma^{N,\varphi}
&=
-\int_{(0,\infty)}
\int_0^1
\varphi'\bigl(x-\theta(x-\kappa_\sigma^N)^+\bigr)
(x-\kappa_\sigma^N)^+
\,\d\theta\,
\nu_{\sigma-}^{N,0}(\d x)\\
&=
-\alpha\Delta q_\sigma^N
\int_0^1
\int_{(0,\infty)}
\varphi'\bigl(x-\theta(x-\kappa_\sigma^N)^+\bigr)
R_\sigma^N(\d x)\,\d\theta.
\end{aligned}
$$
Therefore
$$
A_t^{N,\varphi}
=
-\alpha
\sum_{\sigma\leq t}
\Delta q_\sigma^N
\int_0^1
\int_{(0,\infty)}
\varphi'\bigl(x-\theta(x-\kappa_\sigma^N)^+\bigr)
R_\sigma^N(\d x)\,\d\theta.
$$

Since 
$$
\int (x-\kappa_\sigma^N)^+\,\nu_{\sigma-}^{N,0}(\d x)
=
\alpha\Delta q_\sigma^N,
$$
the assumption $\sup_{\sigma\leq T}\Delta q_\sigma^N\to 0$ implies that the empirical tax size at each default time vanishes. By Lemma~\ref{lem: cutoff_concentration}, along every convergent subsequence,
$$
\kappa_\sigma^N\to \Lambda_\sigma,
\qquad
R_\sigma^N\Rightarrow \delta_{\Lambda_\sigma}.
$$
Consequently,
$$
A_t^{N,\varphi}
\to
-\alpha\int_0^t \varphi'(\Lambda_s)\,\d q_s
$$
in probability, uniformly in $t$. Thus the limiting singular control acts at the upper edge of the support:
$$
R^*(s,\mu_s)=\delta_{\Lambda_s}
=
\delta_{\max\operatorname{supp}\mu_s^+}.
$$

\emph{Step 4: tightness.}
The processes $q^N$ are non-decreasing and take values in $[0,1]$, hence $(q^N)_{N\geq 1}$ is tight in $\mathcal D([0,T])$. Since
$
\ell_t^N=\alpha q_t^N,
$
the sequence $(\ell^N)_{N\geq 1}$ is tight as well.

For the empirical measures, the semimartingale decomposition gives tightness of
$
\langle\varphi,\mu^N\rangle
$
for every $\varphi\in \mathcal C_b^2(\mathbb R_+)$. Moreover, since the loss allocation only moves particles downward, while the positive part of each alive particle is bounded above by its initial value plus the running maximum of a Brownian motion, we have
$$
\sup_N
\mathbb E\left[
\sup_{0\leq t\leq T}
\int_{\mathbb R_+} x\,\mu_t^N(\d x)
\right]
<\infty.
$$
It follows that
$
(\mu^N,q^N,\ell^N)
$
is tight in
$
\mathcal D([0,T],\mathcal P_1(\mathbb R_+))
\times  \mathcal D([0,T])
\times  \mathcal D([0,T]).
$

\emph{Step 5: identification of subsequential limits.}
Assume that
$
(\mu^{N_k},q^{N_k},\ell^{N_k})
\Longrightarrow
(\bar\mu,\bar q,\bar\ell)
$
along a subsequence. Since
$
\ell_t^N=\alpha q_t^N
$
up to an event whose probability vanishes, we get
$
\bar\ell_t=\alpha \bar q_t.
$

Passing to the limit in the empirical semimartingale equation from Step 2, using the convergence of the allocation term from Step 3, we conclude that , for every $\varphi\in \mathcal C_b^2(\mathbb R_+)$,
$$
\langle \varphi,\bar\mu_t\rangle
=
\langle \varphi,\mu_0\rangle
+
\frac{1}{2}\int_0^t
\langle \varphi'',\bar\mu_s^+\rangle\,\d s
-
\alpha\int_0^t
\varphi'(\bar\Lambda_s)\,\d\bar q_s,
$$
where
$$
\bar\mu_s^+:=\bar\mu_s|_{(0,\infty)},
\qquad
\bar\Lambda_s:=\max\operatorname{supp}\bar\mu_s^+.
$$
Since $q_t^N=\mu_t^N(\{0\})$, and the limiting default mass is continuous, we identify
$
\bar q_t=\bar\mu_t(\{0\}).
$
Hence
$$
\bar\ell_t=\alpha\bar\mu_t(\{0\}).
$$
The limiting weak formulation is therefore
$$
\langle \varphi,\bar\mu_t\rangle
=
\langle \varphi,\mu_0\rangle
+
\frac{1}{2}\int_0^t
\langle \varphi'',\bar\mu_s^+\rangle\,\d s
-
\int_0^t
\varphi'(\bar\Lambda_s)\,\d\bar\ell_s.
$$
Notice that this is precisely the weak formulation associated with the reflected mean-field dynamics
$$
\bar X_t=
\begin{cases}
X_0+B_t-\bar L_t, & t<\bar\tau,\\
0, & t\geq \bar\tau,
\end{cases}
$$
where
$$
\bar L_t
=
\sup_{0\leq s\leq t\wedge \bar\tau}
(X_0+B_s-\bar\Lambda_s)^+,\qquad 
\mathbb E[\bar L_t]
=
\alpha\mathbb P(\bar\tau\leq t).
$$

\emph{Step 6: uniqueness of the limiting problem.}
By assumption \textup{(A2)}, the limiting free-boundary problem has a unique solution. Therefore every subsequential limit satisfies
$$
\bar\mu=\mu,
\qquad
\bar q=q,
\qquad
\bar\ell=\ell.
$$
Thus the whole sequence converges:
$$
\mu^N\Longrightarrow \mu,
\qquad
q^N\Longrightarrow q,
\qquad
\ell^N\Longrightarrow \ell.
$$
Since the limiting paths are continuous, convergence in the Skorokhod topology implies uniform convergence in time. Therefore,
$$
\sup_{0\leq t\leq T} W_1(\mu_t^N,\mu_t)\to 0,
$$
and
$$
\sup_{0\leq t\leq T}|q_t^N-q_t|\to 0,
\qquad
\sup_{0\leq t\leq T}|\ell_t^N-\ell_t|\to 0,
$$
in probability.

\emph{Step 7: convergence of terminal objectives.}
Let $f$ be continuous, non-decreasing, concave, and of at most linear growth. Since
$$
\sup_{0\leq t\leq T} W_1(\mu_t^N,\mu_t)\to 0
$$
in probability, and since the first moments are uniformly integrable, we have
$$
\langle f,\mu_T^N\rangle
\to
\langle f,\mu_T\rangle
$$
in probability and in $L^1$. Hence
$$
\lim_{N\to\infty}
\mathbb E\left[\langle f,\mu_T^N\rangle\right]
=
\langle f,\mu_T\rangle.
$$

For the survival payoff $f=1_{(0,\infty)}$, since 
$
\langle f,\mu_T^N\rangle
=
1-q_T^N
$, we therefore conclude that
$$
\lim_{N\to\infty}
\mathbb E[1-q_T^N]
=
1-q_T
=
\mu_T((0,\infty)).
$$

Finally, by Theorem~\ref{thm: mean_field_optimal_control},  the finite-particle cutoff rule is optimal for the finite-particle problem and the limiting cutoff feedback is optimal for the mean-field control problem, hence
$$
V^N(0,x^N)
=
\mathbb E[\langle f,\mu_T^N\rangle],
$$
while
$$
V(0,\mu_0)
=
\langle f,\mu_T\rangle.
$$
Therefore,
$$
\lim_{N\to\infty}V^N(0,x^N)
=
V(0,\mu_0).
$$
This proves the theorem.
\end{proof}

\section{Appendix}
\label{sec:appendix}
\subsection{Stochastic Dominance}
\begin{lemma}[Decomposition of second-order stochastic dominance on $\RR^+$]
Let $\RR^+=[0,\infty)$, and let $\mu,\nu \in \cP(\RR^+)$ have finite first moments. Suppose that $\mu$ dominates $\nu$ in the second-order stochastic order, namely
\begin{align*}
    \mu \succsim_2 \nu .
\end{align*}
Then there exists $\eta\in \cP(\RR^+)$ with finite first moment such that
\begin{align*}
\eta \succsim_1 \nu\quad \text{and} \quad\eta \succsim_{cv} \mu .
\end{align*}
Equivalently, $\eta$ first-order dominates $\nu$, while $\eta$ dominates $\mu$ in the convex order.
\end{lemma}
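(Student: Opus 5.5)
The plan is to reduce the statement to the classical decomposition theorem for the increasing convex order (Shaked--Shanthikumar, Thm.~4.A.6) by reflecting $x\mapsto -x$. Two observations come first. The relation $\mu\succsim_2\nu$ says that $\int g\,\d\mu\ge\int g\,\d\nu$ for every non-decreasing concave $g$, i.e.\ $\nu\le_{icv}\mu$ in the standard notation. Reading $\eta\succsim_{cv}\mu$ as ``$\int g\,\d\mu\ge\int g\,\d\eta$ for all concave $g$'', this means $\eta$ is a mean-preserving spread of $\mu$: $\mu\le_{cx}\eta$ with equal means. Let $X\sim\nu$ and $Y\sim\mu$. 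Then $\nu\le_{icv}\mu$ is equivalent to $-Y\le_{icx}-X$, since $g$ non-decreasing concave iff $x\mapsto -g(-x)$ is non-decreasing convex. The cited theorem provides, among its two equivalent decompositions, a random variable $W$ with $-Y\le_{cx}W\le_{st}-X$. I take $\eta:=\cL(-W)$. Then $W\le_{st}-X$ gives $-W\ge_{st}X$, i.e.\ $\eta\succsim_1\nu$. Also $-Y\le_{cx}W$ gives $Y\le_{cx}-W$, because $x\mapsto\phi(-x)$ is convex whenever $\phi$ is. Hence $\eta$ is a spread of $\mu$ with the same mean, which is $\eta\succsim_{cv}\mu$.

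Two technical points then need checking. First, $\eta\in\cP(\RR^+)$ follows automatically: $\eta\succsim_1\nu$ and $\nu(\RR^+)=1$ give $\eta([0,\infty))\ge\nu([0,\infty))=1$. Second, $\eta$ has finite first moment because $\mu\le_{cx}\eta$ forces $\int x\,\d\eta=\int x\,\d\mu<\infty$. The finite first moments of $\mu,\nu$ are exactly the integrability hypothesis under which the cited theorem is stated.

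To keep the argument self-contained, I would also give a direct quantile construction, which is where the real work lies. Write $q_\mu,q_\nu$ for the quantile functions and $\psi_\mu(u):=\int_0^u q_\mu$, $\psi_\nu(u):=\int_0^u q_\nu$. Then $\mu\succsim_2\nu$ is equivalent to $\psi_\mu\ge\psi_\nu$ on $[0,1]$. The requirements become: $\eta\succsim_1\nu$ iff $q_\eta\ge q_\nu$, and $\eta\succsim_{cv}\mu$ iff $\psi_\eta\le\psi_\mu$ with $\psi_\eta(1)=\psi_\mu(1)$. So I look for $\psi_\eta=\psi_\nu+h$ with $h(0)=0$, $h(1)=\psi_\mu(1)-\psi_\nu(1)\ge0$, $h$ non-decreasing (so $q_\eta\ge q_\nu$), $\psi_\nu+h\le\psi_\mu$, and $\psi_\nu+h$ convex (so $q_\eta$ is a genuine quantile function). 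My first candidate is $h(u):=\inf_{v\in[u,1]}(\psi_\mu(v)-\psi_\nu(v))$. It is non-decreasing, vanishes at $0$, matches at $1$, and satisfies $h\le \psi_\mu-\psi_\nu$.

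The main obstacle is the convexity of $\psi_\nu+h$ at the junctions between contact sets $\{h=\psi_\mu-\psi_\nu\}$ and flat pieces of $h$; this is an ``ironing'' issue. If the naive $h$ fails there, I would replace $\psi_\nu+h$ by the largest convex function lying between $\psi_\nu$ and $\psi_\mu$ with the prescribed endpoint values. This is the greatest convex minorant of $\min(\psi_\mu,\psi_\nu+h(1))$, which is admissible because $\psi_\nu$ itself is a convex candidate. I would then verify that its derivative still dominates $q_\nu$, using that $\psi_\nu$ is convex and sits below. If this verification becomes delicate, I fall back on the reflection argument above, which relies only on the cited classical result.
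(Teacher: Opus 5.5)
Your reflection argument is correct, but it takes a different route from the paper. The paper starts from Strassen's theorem for the increasing-concave order. This gives a coupling $X\sim\nu$, $Y\sim\mu$ with $\EE[X\mid Y]\le Y$, and the paper then writes $\eta$ explicitly as the law of $Z:=X+Y-\EE[X\mid Y]$. Pathwise $Z\ge X$ gives $\eta\succsim_1\nu$, and $\EE[Z\mid Y]=Y$ gives the convex-order relation by conditional Jensen.

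You instead quote the two-step decomposition of $\le_{\mathrm{icx}}$ after the reflection $x\mapsto-x$. Your bookkeeping is all correct: choosing $-Y\le_{\mathrm{cx}}W\le_{\mathrm{st}}-X$ rather than the other decomposition, the sign flips, $\eta(\RR^+)=1$ from $\eta\succsim_1\nu$, and a finite mean from equality of means. The trade-off is that the theorem you cite is, up to reflection, the lemma itself, so essentially all the content is outsourced. The paper derives it from the more primitive coupling characterization; this is in fact the standard proof of the decomposition you cite. It also realizes $\eta$ on the same space as $(X,Y)$, with $Z\ge X$ a.s.\ and $(Y,Z)$ a martingale pair.

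Your optional quantile construction, however, fails as written: the greatest convex minorant of $\min(\psi_\mu,\psi_\nu+h(1))$ need not have slope above $q_\nu$. Take $\mu=\tfrac25\delta_1+\tfrac35\delta_3$ and $\nu=\tfrac15\delta_0+\tfrac45\delta_{3/2}$. Then $D:=\psi_\mu-\psi_\nu$ equals $u$ on $[0,\tfrac15]$, $\tfrac{3}{10}-\tfrac u2$ on $[\tfrac15,\tfrac25]$, and $\tfrac32u-\tfrac12$ on $[\tfrac25,1]$. So $D\ge0$ (hence $\mu\succsim_2\nu$), $h(1)=D(1)=1$, and $D\le1$ everywhere. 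Thus $\min(\psi_\mu,\psi_\nu+1)=\psi_\mu$, which is already convex, and your candidate returns $\eta=\mu$. But $\mu([\tfrac32,\infty))=\tfrac35<\tfrac45=\nu([\tfrac32,\infty))$, so $\eta\succsim_1\nu$ fails. Replacing $\psi_\nu+h$ by $\min(\psi_\mu,\psi_\nu+h(1))$ discards the non-local constraint that $\Psi-\psi_\nu$ be non-decreasing, which is exactly what your $h$ encoded.

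The repair is to iron $\psi_\nu+h$ itself: let $\Psi$ be the greatest convex minorant of $\psi_\nu+h$. Then $\Psi(0)=0$, $\Psi(1)=\psi_\mu(1)$, and $\psi_\nu\le\Psi\le\psi_\nu+h\le\psi_\mu$. At differentiability points of the contact set $\{\Psi=\psi_\nu+h\}$ one has $\Psi'=q_\nu+h'\ge q_\nu$. On a maximal interval $(a,b)$ where $\Psi<\psi_\nu+h$, $\Psi$ is affine with some slope $s$. Because $h$ is non-decreasing, $\Psi(b)-\Psi(u)\ge\psi_\nu(b)-\psi_\nu(u)$ for $u<b$, which gives $s\ge q_\nu$ on $(a,b)$. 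In the example this yields $\eta=\tfrac15\delta_{1/2}+\tfrac15\delta_{3/2}+\tfrac35\delta_3$. Since you explicitly fall back on the reflection argument, your proof stands, but the quantile paragraph should either be corrected this way or dropped.
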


\begin{proof}
By the Strassen theorem for the increasing-concave order, the assumption
\begin{align*}
\mu \succsim_2 \nu
\end{align*}
is equivalent to the existence of random variables $X$ and $Y$, defined on a common probability space, such that
\begin{align*}
X \sim \nu,\quad Y \sim \mu,\quad\mathbb E[X\mid Y] \le Y\quad \text{a.s.}
\end{align*}
Since $\mu,\nu\in \cP(\RR^+)$, we may take $X,Y\ge 0$ almost surely.

Define
\begin{align*}
d(Y):=Y-\mathbb E[X\mid Y].
\end{align*}
Then $d(Y)\ge 0$ almost surely. Now set
\begin{align*}
Z:=X+d(Y),
\end{align*}
and define
\begin{align*}
\eta:=\mathrm{Law}(Z).
\end{align*}
Since $X\ge 0$ and $d(Y)\ge 0$ almost surely, we have
\begin{align*}
Z\ge 0\quad \text{a.s.}
\end{align*}
Thus $\eta\in \cP(\RR^+)$. Moreover, since $X$ and $Y$ have finite first moments and
\begin{align*}
0\le d(Y)\le Y+\mathbb E[X\mid Y],
\end{align*}
we also have
\begin{align*}
\mathbb E[Z]<\infty.
\end{align*}

We first prove the first-order dominance relation. Since $d(Y)\ge 0$ almost surely, we have
\begin{align*}
Z \ge X\quad \text{a.s.}
\end{align*}
Therefore $Z$ first-order stochastically dominates $X$, and hence
\begin{align*}
\eta=\mathrm{Law}(Z)\succsim_1\mathrm{Law}(X)=\nu .
\end{align*}

It remains to prove the convex-order relation. By construction,
\begin{align*}
    \mathbb E[Z\mid Y]&=\mathbb E[X+d(Y)\mid Y]  \\
    &=\mathbb E[X\mid Y]+d(Y)  \\
    &=\mathbb E[X\mid Y]+Y-\mathbb E[X\mid Y]  \\
    &=Y.
\end{align*}
Thus $(Y,Z)$ is a martingale coupling from $\mu$ to $\eta$. Hence, for every convex function $\psi:\RR^+\to\RR$ for which the integrals are well-defined, Jensen's inequality gives
\begin{align*}
\psi(Y)=\psi\bigl(\mathbb E[Z\mid Y]\bigr)\le\mathbb E[\psi(Z)\mid Y].
\end{align*}
Taking expectations, we obtain
\begin{align*}
\int_{\RR^+} \psi(x)\,\mu(dx)=\mathbb E[\psi(Y)]\le\mathbb E[\psi(Z)]=\int_{\RR^+} \psi(x)\,\eta(dx).
\end{align*}
Therefore
\begin{align*}
\eta \succsim_{cv} \mu .
\end{align*}
Combining the two relations yields
\begin{align*}
\eta \succsim_1 \nu\quad \text{and} \quad\eta \succsim_{cv} \mu,
\end{align*}
as desired.
\end{proof}

\subsection{Compactness in $\cD$}
\begin{definition}
For $f\in\cD$, $t\in[0,T]$ and $\delta>0$,
\begin{align*}
w_s(f,t,\delta):=\sup_{0\vee(t-\delta)\leq t_1<t_2\leq t_3\leq(t+\delta)\wedge T}\{\|f(t_2)-[f(t_1),f(t_3)]\|\}.    
\end{align*}  
\end{definition}
In the Lemma below, we see that this modulus of oscillation can be controlled by a one-sided growth condition.
\begin{lemma}
Suppose $f\in\cD$ is such that 
\begin{align*}
\sup_{s<t,\,|t-s|\leq\delta}(f(t)-f(s))\leq\omega(\delta).    
\end{align*}
Then 
\begin{align*}
\sup_{t}w_s(f,t,\delta)\leq\omega(2\delta).    
\end{align*}
\end{lemma}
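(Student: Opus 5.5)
The argument is a direct case analysis on where $f(t_2)$ sits relative to $f(t_1)$ and $f(t_3)$. No compactness or regularity beyond the definitions is needed. I read the hypothesis as holding for every $\delta>0$, so that $\omega(h)$ bounds $f(t)-f(s)$ whenever $s<t$ and $t-s\le h$. In particular the bound at scale $2\delta$ reads $f(t)-f(s)\le\omega(2\delta)$ for all $s<t$ with $t-s\le 2\delta$. I also use that one may take $\omega$ non-decreasing, replacing it by $h\mapsto\sup_{h'\le h}\omega(h')$ if necessary; this is harmless since the modulus $w(h)=\sqrt{h\log(1/h)}$ used in Step 2 is increasing for small $h$. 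Finally, $f$ is real-valued here (it is applied to $\Lambda^\Delta$), so $[f(t_1),f(t_3)]$ denotes the closed segment between the two values, and $\|f(t_2)-[f(t_1),f(t_3)]\|$ is the distance from $f(t_2)$ to that segment.

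Fix $t\in[0,T]$ and times $0\vee(t-\delta)\le t_1<t_2\le t_3\le (t+\delta)\wedge T$. Then $t_2-t_1\le 2\delta$ and $t_3-t_2\le 2\delta$. Set $m:=\min\{f(t_1),f(t_3)\}$ and $M:=\max\{f(t_1),f(t_3)\}$. There are three cases.
\begin{itemize}
\item If $m\le f(t_2)\le M$, the distance is $0$.
\item If $f(t_2)>M$, the distance is $f(t_2)-M\le f(t_2)-f(t_1)$. Since $t_1<t_2$ and $t_2-t_1\le2\delta$, this is at most $\omega(2\delta)$.
\item If $f(t_2)<m$, the distance is $m-f(t_2)\le f(t_3)-f(t_2)$. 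If $t_3=t_2$ this is $0$. Otherwise $t_2<t_3$ with $t_3-t_2\le 2\delta$, and the one-sided bound again gives at most $\omega(2\delta)$.
\end{itemize}

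In every case $\|f(t_2)-[f(t_1),f(t_3)]\|\le\omega(2\delta)$. Taking the supremum over admissible $t_1,t_2,t_3$ and then over $t$ yields $\sup_t w_s(f,t,\delta)\le\omega(2\delta)$.

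The only point needing care is that just an upper (one-sided) increment bound is available. The case split handles this: an upward excursion of $f(t_2)$ is controlled by the increment from $t_1$ to $t_2$, and a downward excursion by the increment from $t_2$ to $t_3$. Both are forward increments, so downward jumps never need to be bounded. This is exactly why the $M_1$-type oscillation, rather than the uniform modulus, is the right quantity for the non-increasing-jump boundaries $\Lambda^\Delta$.
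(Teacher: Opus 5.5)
Your proof is correct and follows the paper's argument. The paper uses two cases: if $f(t_2)\ge f(t_1)$ the distance is at most the forward increment $f(t_2)-f(t_1)$, and otherwise it is at most $(f(t_3)-f(t_2))_+$, which is your three-case analysis collapsed into two. Like you, the paper implicitly applies the hypothesis at scale $2\delta$, and the monotonization of $\omega$ you mention is never actually used.
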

\begin{proof}
Take $t,t_1,t_2,t_3$ such that $0\vee(t-\delta)\leq t_1<t_2\leq t_3\leq(t+\delta)\wedge T$. If $f(t_2)\ge f(t_1)$ then 
\begin{align*}
\|f(t_2)-[f(t_1),f(t_3)]\|\leq f(t_2)-f(t_1)\leq\omega(2\delta).    
\end{align*}
If $f(t_2)<f(t_1)$ then
\begin{align*}
\|f(t_2)-[f(t_1),f(t_3)]\|\leq (f(t_3)-f(t_2))_+\leq\omega(2\delta).    
\end{align*}
\end{proof}


\bibliography{Main}

\bigskip\bigskip\bigskip

\end{document}